\providecommand{\tabularnewline}{\\}
\numberwithin{equation}{section}
\numberwithin{figure}{section}
\numberwithin{table}{section}
\theoremstyle{plain}
\newtheorem{thm}{\protect\theoremname}[section]
\theoremstyle{definition}
\newtheorem{defn}[thm]{\protect\definitionname}
\theoremstyle{plain}
\newtheorem{lem}[thm]{\protect\lemmaname}
\theoremstyle{plain}
\newtheorem{prop}[thm]{\protect\propositionname}
\theoremstyle{plain}
\newtheorem{cor}[thm]{\protect\corollaryname}
\theoremstyle{plain}
\newtheorem*{question*}{\protect\questionname}
\theoremstyle{remark}
\newtheorem{rem}[thm]{\protect\remarkname}
\theoremstyle{definition}
\newtheorem{example}[thm]{\protect\examplename}
\theoremstyle{remark}
\newtheorem*{notation*}{\protect\notationname}
\newcommand\thmsname{\protect\theoremname}
\newcommand\nm@thmtype{theorem}
\theoremstyle{plain}
\newenvironment{namedthm}[1][Undefined Theorem Name]{
  \ifx{#1}{Undefined Theorem Name}\renewcommand\nm@thmtype{theorem*}
  \else\renewcommand\thmsname{#1}\renewcommand\nm@thmtype{namedtheorem}
  \fi
  \begin{\nm@thmtype}}
  {\end{\nm@thmtype}}
\theoremstyle{plain}
\newtheorem{question}[thm]{\protect\questionname}
\theoremstyle{remark}
\newtheorem*{acknowledgement*}{\protect\acknowledgementname}
\setlist[enumerate,1]{label=\textup{(}\roman*\textup{)},ref=\roman*}
\setlist[enumerate,2]{label=\textup{(}\alph*\textup{)},ref=\theenumi \alph*}
\def\paragraph*{\@startparagraph{paragraph}{3}%
  \z@{.5\linespacing\@plus.7\linespacing}{.1\linespacing}%
  {\normalfont\itshape}}
\def\paragraph{\@startsection{paragraph}{4}%
   \z@{.5\linespacing\@plus.7\linespacing}{.1\linespacing}%
  {\normalfont\bfseries}}
\providecommand{\acknowledgementname}{Acknowledgement}
\providecommand{\corollaryname}{Corollary}
\providecommand{\definitionname}{Definition}
\providecommand{\examplename}{Example}
\providecommand{\lemmaname}{Lemma}
\providecommand{\notationname}{Notation}
\providecommand{\propositionname}{Proposition}
\providecommand{\questionname}{Question}
\providecommand{\remarkname}{Remark}
\providecommand{\theoremname}{Theorem}
\begin{document}
\title[]{Superposition, reduction of multivariable problems, and approximation}
\author{Palle Jorgensen and Feng Tian}
\address{(Palle E.T. Jorgensen) Department of Mathematics, The University of
Iowa, Iowa City, IA 52242-1419, U.S.A. }
\email{palle-jorgensen@uiowa.edu}
\urladdr{http://www.math.uiowa.edu/\textasciitilde jorgen/}
\address{(Feng Tian) Department of Mathematics, Hampton University, Hampton,
VA 23668, U.S.A.}
\email{feng.tian@hamptonu.edu}
\subjclass[2000]{Primary 47L60, 46N30, 46N50, 42C15, 65R10, 31C20, 62D05, 94A20, 39A12;
Secondary 46N20, 22E70, 31A15, 58J65}
\keywords{Hilbert space, reproducing kernel Hilbert space, harmonic analysis,
transforms, covariance, multivariable problems, superposition, approximation,
optimization.}
\begin{abstract}
We study reduction schemes for functions of \textquotedblleft many\textquotedblright{}
variables into system of functions in one variable. Our setting includes
infinite-dimensions. Following Cybenko-Kolmogorov, the outline for
our results is as follows: We present explicit reductions schemes
for multivariable problems, covering both a finite, and an infinite,
number of variables. Starting with functions in \textquotedblleft many\textquotedblright{}
variables, we offer constructive reductions into superposition, with
component terms, that make use of only functions in one variable,
and specified choices of coordinate directions. Our proofs are transform
based, using explicit transforms, Fourier and Radon; as well as multivariable
Shannon interpolation.
\end{abstract}

\maketitle
\tableofcontents{}

\section{Introduction}

In this paper we consider a general problem, which deals with functions
in \textquotedblleft many\textquotedblright{} variables, and their
possible reduction into superposition, with component terms that make
use of only functions in one variable, and suitable choices of coordinate
directions. The problem reads as follows, in brief summary:

\emph{Reduction of functions of ``many'' variables into system of
functions in one variable.}\\

Classically, variants of the question were first asked in the case
of functions of a finite number of variables, say $m$ (\textquotedblleft large\textquotedblright );
see Theorems \ref{thm:B1} and \ref{thm:B2} below. If $F$ is a function
on a subset in $\mathbb{R}^{m}$, it is natural to ask that $F$ allow
a reconstruction, or approximation, via choices of a suitable set
of coordinate directions, each such direction given by a non-zero
vector $w$ in $\mathbb{R}^{m}$. When a system $W$ of directions
is specified, one wishes to approximate $F$ with an associated system
of functions (of one variable), one for each direction specified by
the set $W$. Following Kolmogorov, one says that $F$ admits a superposition;
see \prettyref{thm:B2}. Here we shall also be concerned with functions
in an infinite number of variables, especially functions $F$ which
arise as random variables in some specified probability space; see
\prettyref{prop:A3}, and \prettyref{fig:Fdim1} below. In this case,
it is natural to think of \textquotedblleft directions\textquotedblright{}
as a choice of real valued random variables, one for each direction.

The Universal Approximation Theorem (UAT) as developed by Kolmogorov
and Cybenko (see \prettyref{thm:B1}) is of current interest as it
provides a partial explanation for why neural networks are able to
\textquotedblleft learn'' from data. However Cybenko's variant of
UAT, dealing with sigmoid as activation function, is more existential
than constructive. We attempt here to remedy that somewhat: We aim
to quantify defect, meaning the lack of density; hence a variety of
choices of UAT-activation functions.

\textbf{Organization.} We first outline our infinite-dimensional setting:
A choice of our probabilistic framework, including specification of
the appropriate probability space, and our choice of systems of random
variables. In sect \ref{sec:ck}, we expand on our extension of, and
approach to, a transform-setting for generalized Universal Approximation
Theorems (UAT), and Cybenko-Kolmogorov. For this purpose, we introduce,
in sect \ref{sec:ps}, a new projective space of equivalence classes.
In sections \ref{sec:mf}--\ref{sec:fr}, we state our results, and
transform based algorithms. This includes the transforms of Fourier
and Radon; as well as a multivariable Shannon interpolation adapted
to UAT.

\subsection{Infinite dimensions and a probabilistic framework}
\begin{defn}
Let $\left(\Omega,\mathscr{F},\mathbb{P}\right)$ be a probability
space, and 
\begin{equation}
\mathbb{E}\left(\cdot\cdot\right)=\int_{\Omega}\left(\cdot\cdot\right)d\mathbb{P},\label{eq:A1}
\end{equation}
be the mean (or expectation). Let 
\begin{equation}
X:\Omega\longrightarrow\mathbb{R}\label{eq:A2}
\end{equation}
be measurable with respect to $\mathscr{F}$ on $\Omega$, and $\mathscr{B}$
(Borel $\sigma$-algebra) on $\mathbb{R}$; we say that $X$ is a
\emph{random variable}.

In the finite dimensional case, $m<\infty$, we shall consider 
\[
\Omega=J_{m}=\left[-1,1\right]^{m}=\left\{ x=\left(x_{j}\right)_{1}^{m}\mid-1\leq x_{j}\leq1\right\} ,
\]
and $L^{2}\left(J_{m}\right)$ with the standard Lebesgue measure.
The measure can be normalized, so that 
\[
\lambda_{m}:=\frac{1}{2^{m}}dx=\frac{1}{2^{m}}dx_{1}\cdots dx_{m}
\]
satisfies $\lambda_{m}\left(J^{m}\right)=1$. See Figures \ref{fig:Fdim1}
and \ref{fig:Fdim2}.
\end{defn}

\paragraph*{The infinite dimensional case}
\begin{lem}
Let $\left(\Omega,\mathscr{F}\right)$ be a measure space, and let
$X:\Omega\rightarrow\mathbb{R}$ be measurable, where $\mathbb{R}$
is equipped with the Borel $\sigma$-algebra $\mathscr{B}$. Then,
if $F$ is measurable $\left(\Omega,\mathscr{F}\right)\rightarrow\left(\mathbb{R},\mathscr{B}\right)$,
TFAE:
\begin{enumerate}
\item \label{enu:lemA2-1}$\exists$ $\varphi:\mathbb{R}\rightarrow\mathbb{R}$,
$\mathscr{B}$-measurable s.t. $F=\varphi\circ X$, and 
\item \label{enu:lemA2-2}$F$ is measurable w.r.t. the pullback $\sigma$-algebra
$\mathscr{F}_{X}:=X^{-1}\left(\mathscr{B}\right)$; see \prettyref{fig:A3}.
\end{enumerate}
\end{lem}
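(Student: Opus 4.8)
The plan is to recognize this as the classical Doob--Dynkin factorization lemma and to prove the two implications separately, with essentially all of the content residing in $(ii)\Rightarrow(i)$; the converse is a one-line verification.

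I would dispatch $(i)\Rightarrow(ii)$ first. If $F=\varphi\circ X$ with $\varphi$ Borel, then for every $B\in\mathscr{B}$ we have $F^{-1}(B)=X^{-1}\bigl(\varphi^{-1}(B)\bigr)$; since $\varphi$ is $\mathscr{B}$-measurable, $\varphi^{-1}(B)\in\mathscr{B}$, hence $F^{-1}(B)\in X^{-1}(\mathscr{B})=\mathscr{F}_X$. Thus $F$ is $\mathscr{F}_X$-measurable, which is exactly $(ii)$.

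For $(ii)\Rightarrow(i)$ I would run the standard three-stage ``build up from indicators'' scheme. First, for an indicator $F=\mathbf{1}_A$ with $A\in\mathscr{F}_X$: by definition of the pullback $\sigma$-algebra there is $B\in\mathscr{B}$ with $A=X^{-1}(B)$, and taking $\varphi=\mathbf{1}_B$ gives $\varphi\circ X=\mathbf{1}_B\circ X=\mathbf{1}_{X^{-1}(B)}=\mathbf{1}_A$. Second, by linearity this extends to any $\mathscr{F}_X$-measurable simple function $F=\sum_{k=1}^{n}c_k\mathbf{1}_{A_k}$: choosing $B_k\in\mathscr{B}$ with $A_k=X^{-1}(B_k)$ and setting $\varphi=\sum_{k=1}^{n}c_k\mathbf{1}_{B_k}$ (a Borel simple function) yields $\varphi\circ X=F$. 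Third, for a general $\mathscr{F}_X$-measurable real-valued $F$, I would approximate it pointwise by $\mathscr{F}_X$-measurable simple functions $F_n\to F$; these exist by the usual dyadic truncation, and they are $\mathscr{F}_X$-measurable because every level set $F^{-1}(\text{interval})$ lies in $\mathscr{F}_X$. Lifting each $F_n$ by the simple-function case produces Borel simple $\varphi_n$ with $\varphi_n\circ X=F_n$.

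The step I expect to be the main obstacle is assembling the $\varphi_n$ into a single Borel $\varphi$, because $X$ need not be injective or surjective: $\varphi$ is genuinely constrained only on the range $\operatorname{ran}(X)$, and off that range the sequence $(\varphi_n)$ may fail to converge. I would get around this by declaring $C:=\{y\in\mathbb{R}:(\varphi_n(y))_n\text{ is Cauchy}\}$, which is a Borel set since the Cauchy condition is expressible through countably many Borel conditions on the $\varphi_n$, and then defining $\varphi:=\lim_n\varphi_n$ on $C$ and $\varphi:=0$ on $\mathbb{R}\setminus C$; as a pointwise limit of Borel functions on a Borel set, $\varphi$ is $\mathscr{B}$-measurable. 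To conclude, fix $\omega\in\Omega$: the sequence $\varphi_n(X(\omega))=F_n(\omega)$ converges to the finite value $F(\omega)$, so $X(\omega)\in C$ and $\varphi(X(\omega))=F(\omega)$. Hence $\varphi\circ X=F$, establishing $(i)$ and completing the equivalence.
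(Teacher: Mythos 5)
Your proposal is correct and takes essentially the same route as the paper: the easy implication via the preimage identity $(\varphi\circ X)^{-1}(J)=X^{-1}\bigl(\varphi^{-1}(J)\bigr)$, and the Doob--Dynkin direction by lifting indicators $\chi_{X^{-1}(B)}=\chi_{B}\circ X$, then simple functions, then pointwise limits. The paper merely asserts that the limiting step ``follows'' from simple-function approximation, whereas you correctly flag it as the only delicate point and close it with the Borel Cauchy set $C$ and the definition $\varphi=\lim_n\varphi_n$ on $C$, $\varphi=0$ off $C$ --- a complete and careful filling-in of the detail the paper leaves implicit.
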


\begin{figure}[H]
\[
\xymatrix{\Omega\ar@/^{1.3pc}/[rr]^{X}\ar[dr]_{F=\varphi\circ X} &  & \mathbb{R}\ar[dl]^{\varphi}\\
 & \mathbb{R}
}
\]

\caption{\label{fig:A3}Measurable.}
\end{figure}

\begin{proof}
The implication \eqref{enu:lemA2-1}$\Longrightarrow$\eqref{enu:lemA2-2}
is immediate from the definitions. Note that if $J\in\mathscr{B}$,
then 
\begin{align}
\left(\varphi\circ X\right)^{-1}\left(J\right) & =X^{-1}\left(\varphi^{-1}\left(J\right)\right);\;\text{and}\\
\chi_{_{J}}\circ X & =\chi_{_{X^{-1}\left(J\right)}},
\end{align}
holds for the respective indicator functions. 

Hence if $\varphi:\mathbb{R}\rightarrow\mathbb{R}$ is a simple function,
$J_{i}\in\mathscr{B}$, $c_{i}\in\mathbb{R}$, $1\leq i\leq N$, 
\begin{equation}
\varphi=\sum_{i}c_{i}\chi_{_{J_{i}}},
\end{equation}
we get 
\[
\varphi\circ X=\sum_{i}c_{i}\chi_{_{X^{-1}\left(J_{i}\right)}}.
\]
Since measurability is characterized via approximation with the respective
simple functions, the remaining implication \eqref{enu:lemA2-2}$\Longrightarrow$\eqref{enu:lemA2-1}
now follows. 
\end{proof}
\begin{prop}
\label{prop:A3}If $\mu_{X}:=\mathbb{P}\circ X^{-1}$ denotes the
\uline{distribution} of $X$ in \eqref{eq:A2}, then 
\begin{equation}
L^{2}\left(\mathbb{R},\mu_{X}\right)\ni\varphi\xrightarrow{\quad T_{X}\quad}\varphi\circ X\in L^{2}\left(\Omega,\mathbb{P}\right)
\end{equation}
is \uline{isometric}; and the \emph{adjoint} operator 
\[
T_{X}^{*}:L^{2}\left(\Omega,\mathbb{P}\right)\longrightarrow L^{2}\left(\mathbb{R},\mu_{X}\right)
\]
is \uline{coisometric}. It is given by the $X$-conditional expectation
\begin{equation}
T_{X}^{*}\left(F\right)\left(x\right)=\mathbb{E}_{X=x}\left(F\mid\mathscr{F}_{X}\right),\quad F\in L^{2}\left(\Omega,\mathbb{P}\right).\label{eq:A4}
\end{equation}
\end{prop}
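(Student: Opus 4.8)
The plan is to prove the three assertions in turn, with essentially all of the substance concentrated in the identification formula \eqref{eq:A4} for the adjoint. First I would record the transfer (change-of-variables) formula for the pushforward measure $\mu_X=\mathbb{P}\circ X^{-1}$, namely that for every nonnegative Borel function $g$,
\[
\int_{\Omega}\left(g\circ X\right)d\mathbb{P}=\int_{\mathbb{R}}g\,d\mu_{X},
\]
which one establishes by the usual bootstrap from indicators $g=\chi_{_{J}}$ (where it is the very definition of $\mu_{X}$), through simple functions, to general $g$ by monotone convergence—exactly the simple-function approximation already used in the preceding Lemma. Taking $g=\left|\varphi\right|^{2}$ then does double duty: it shows that $T_{X}$ is well defined on equivalence classes (a $\mu_{X}$-null set pulls back under $X^{-1}$ to a $\mathbb{P}$-null set), and it gives $\left\Vert T_{X}\varphi\right\Vert _{L^{2}(\mathbb{P})}^{2}=\left\Vert \varphi\right\Vert _{L^{2}(\mu_{X})}^{2}$, i.e. $T_{X}$ is isometric.

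The coisometry of $T_{X}^{*}$ is then automatic and requires no computation: an isometry $V$ satisfies $V^{*}V=I$, so its adjoint obeys $T_{X}^{*}\left(T_{X}^{*}\right)^{*}=T_{X}^{*}T_{X}=I$, which is precisely the assertion that $T_{X}^{*}$ is a coisometry. For the identification of $T_{X}^{*}$, fix $F\in L^{2}\left(\Omega,\mathbb{P}\right)$. The conditional expectation $\mathbb{E}\left(F\mid\mathscr{F}_{X}\right)$ is the orthogonal projection of $F$ onto the closed subspace $L^{2}\left(\Omega,\mathscr{F}_{X},\mathbb{P}\right)$; being $\mathscr{F}_{X}$-measurable, the preceding Lemma (Doob--Dynkin) supplies a Borel function $\psi:\mathbb{R}\rightarrow\mathbb{R}$ with $\mathbb{E}\left(F\mid\mathscr{F}_{X}\right)=\psi\circ X$, and $\psi\left(x\right)$ is by definition $\mathbb{E}_{X=x}\left(F\mid\mathscr{F}_{X}\right)$.

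To see that $T_{X}^{*}F=\psi$, I would test against an arbitrary $\varphi\in L^{2}\left(\mu_{X}\right)$ through the defining adjoint relation, using that $\varphi\circ X$ is $\mathscr{F}_{X}$-measurable:
\[
\left\langle \varphi,T_{X}^{*}F\right\rangle _{\mu_{X}}=\left\langle \varphi\circ X,F\right\rangle _{\mathbb{P}}=\mathbb{E}\big[\left(\varphi\circ X\right)\overline{F}\big]=\mathbb{E}\big[\left(\varphi\circ X\right)\mathbb{E}\left(\overline{F}\mid\mathscr{F}_{X}\right)\big],
\]
where the last step is the ``take out what is known'' (tower) property, legitimate since $\left(\varphi\circ X\right)\overline{F}\in L^{1}$ by Cauchy--Schwarz. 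Because $\mathbb{E}\left(\overline{F}\mid\mathscr{F}_{X}\right)=\overline{\psi}\circ X$, a second use of the transfer formula rewrites the right-hand side as $\int_{\mathbb{R}}\varphi\,\overline{\psi}\,d\mu_{X}=\left\langle \varphi,\psi\right\rangle _{\mu_{X}}$. Since $\varphi$ was arbitrary, $T_{X}^{*}F=\psi$, which is \eqref{eq:A4}.

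The main obstacle is the bookkeeping in this last step rather than any deep estimate: one must confirm that $\mathbb{E}\left(F\mid\mathscr{F}_{X}\right)\in L^{2}$ (so that $\psi\in L^{2}\left(\mu_{X}\right)$ and the pairing is meaningful), justify pulling the $\mathscr{F}_{X}$-measurable factor $\varphi\circ X$ through the conditional expectation at the $L^{2}$ level, and invoke the preceding Lemma to convert the abstract projection $\mathbb{E}\left(F\mid\mathscr{F}_{X}\right)$ into a genuine function $\psi$ on $\mathbb{R}$; everything else is formal. As a consistency check, the computation yields $T_{X}T_{X}^{*}F=\psi\circ X=\mathbb{E}\left(F\mid\mathscr{F}_{X}\right)$, so $T_{X}T_{X}^{*}$ is exactly the orthogonal projection onto the range $L^{2}\left(\Omega,\mathscr{F}_{X},\mathbb{P}\right)$ of $T_{X}$, as it must be for the adjoint of an isometry.
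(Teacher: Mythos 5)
Your proposal is correct and follows essentially the same route as the paper's own proof: you identify $T_{X}^{*}F$ with the Borel function $\psi$ furnished by the preceding (Doob--Dynkin) lemma applied to $\mathbb{E}\left(F\mid\mathscr{F}_{X}\right)$, and verify the adjoint relation through the pull-out property of conditional expectation, which is exactly the paper's identity \eqref{eq:A6}, arriving at the same operator identities $T_{X}^{*}T_{X}=I$ and $T_{X}T_{X}^{*}=\mathbb{E}\left(\cdot\mid\mathscr{F}_{X}\right)$. The only difference is expository: you spell out the pushforward transfer formula behind the isometry and the formal deduction of coisometry from $T_{X}^{*}T_{X}=I$, details the paper leaves implicit.
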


\begin{figure}
\includegraphics[width=0.6\textwidth]{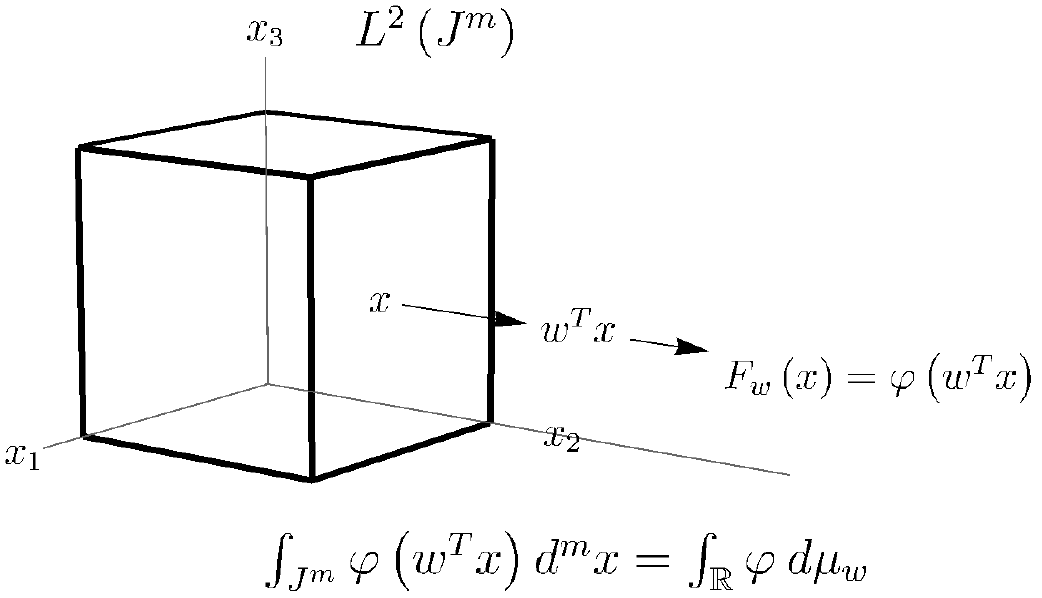}

\caption{\label{fig:Fdim1}Finite dimensions}
\end{figure}

\begin{figure}
\includegraphics[width=0.5\textwidth]{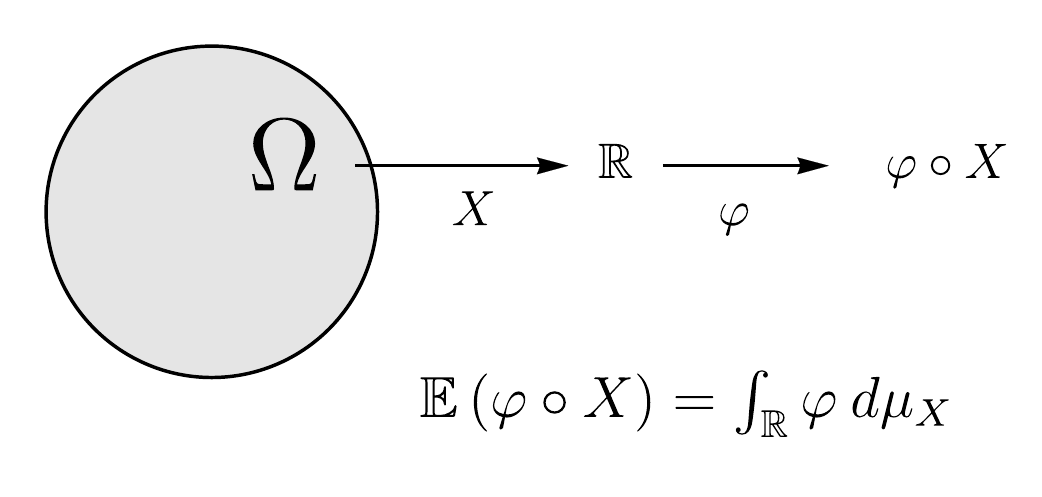}

\caption{\label{fig:Fdim2}Infinite-dimensional probability space $\left(\Omega,\mathscr{F},\mathbb{P}\right)$}
\end{figure}

\begin{proof}
The meaning of the RHS in \eqref{eq:A4} is as follows. It is a double
conditioning: (i) Conditional expectation by the sub $\sigma$-algebra
$\mathscr{F}_{X}=X^{-1}\left(\mathscr{B}\right)$ in $\mathscr{F}$
generated by the random variable $X$ (i.e., the pullback of the Borel
sets under $X$); and (ii) secondly we condition by the initial condition
$X=x$; so $T_{X}^{*}\left(F\right)$ becomes a function defined on
$\mathbb{R}$. Said differently, $T_{X}^{*}\left(F\right)=\varphi$
is the function satisfying, 
\begin{equation}
\mathbb{E}\left(F\mid\mathscr{F}_{X}\right)=\varphi\circ X.
\end{equation}
To see this, recall that the conditional expectation $\mathbb{E}\left(\cdot\cdot\mid\mathscr{F}_{X}\right)$
is the orthogonal projection in $L^{2}\left(\Omega,\mathscr{F},\mathbb{P}\right)$
onto the subspace generated by the functions $\psi\circ X$, as $\psi$
varies over all the Borel measurable functions on $\mathbb{R}$. Moreover,
the conditional expectation satisfies 
\begin{equation}
\mathbb{E}\left(\left(\psi\circ X\right)\mathbb{E}\left(F\mid\mathscr{F}_{X}\right)\right)=\mathbb{E}\left(\left(\psi\circ X\right)F\right)\label{eq:A6}
\end{equation}
valid for all $\psi$, and all $F\in L^{2}\left(\Omega,\mathscr{F},\mathbb{P}\right)$.

It follows that the two operators $T_{X}$, and adjoint $T_{X}^{*}$
satisfy the following identities:
\begin{align*}
T_{X}^{*}T_{X} & =I_{L^{2}\left(\mathbb{R},\,\mu_{X}\right)},\;\text{and}\\
T_{X}T_{X}^{*} & =\mathbb{E}\left(\cdot\cdot\mid\mathscr{F}_{X}\right).
\end{align*}
\end{proof}

\begin{cor}
Let $\left(\Omega,\mathscr{F},\mathbb{P}\right)$ denote a probability
space. Let $\mathscr{W}$ be a system of real valued random variables,
and assumed contained in $L^{2}\left(\Omega,\mathscr{F},\mathbb{P}\right)$.
Let $F$ be a random variable with $\mathbb{E}\left(\left|F\right|^{2}\right)<\infty$;
then TFAE:
\begin{enumerate}
\item $F\in L^{2}\left(\Omega,\mathbb{P}\right)\ominus\left\{ \varphi\circ X\mathrel{;}\varphi\in\mathscr{C},\:X\in\mathscr{W}\right\} $,
where $\mathscr{C}:=C_{b}\left(\mathbb{R},\mathbb{R}\right)$ or $C_{b}\left(\mathbb{R},\mathbb{C}\right)$;
\item $T_{X}^{*}\left(F\right)=0$, $\forall X\in\mathscr{W}$.
\end{enumerate}
\end{cor}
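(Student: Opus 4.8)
The plan is to convert the orthogonality condition in (i) into a statement about the coisometry $T_X^*$ via the defining adjoint relation, and then to recognize that testing against all $\varphi\in\mathscr{C}$ amounts to testing against a dense subset of $L^2\left(\mathbb{R},\mu_X\right)$. The starting point is the identity, valid for each fixed $X\in\mathscr{W}$ and every $\varphi\in\mathscr{C}$,
\[
\left\langle F,\varphi\circ X\right\rangle _{L^2\left(\Omega,\mathbb{P}\right)}=\left\langle F,T_X\varphi\right\rangle _{L^2\left(\Omega,\mathbb{P}\right)}=\left\langle T_X^*F,\varphi\right\rangle _{L^2\left(\mathbb{R},\mu_X\right)},
\]
where the first equality is the definition $T_X\varphi=\varphi\circ X$ from \prettyref{prop:A3}, and the second is the definition of the adjoint. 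Thus the inner product of $F$ against a generating vector $\varphi\circ X$ upstairs in $L^2\left(\Omega,\mathbb{P}\right)$ is computed downstairs in $L^2\left(\mathbb{R},\mu_X\right)$ as the pairing of $T_X^*F$ against $\varphi$.

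For the implication (ii)$\Longrightarrow$(i) I would argue directly: if $T_X^*F=0$ for every $X\in\mathscr{W}$, then the displayed identity gives $\left\langle F,\varphi\circ X\right\rangle =0$ for all $\varphi\in\mathscr{C}$ and all $X\in\mathscr{W}$. Since orthogonality to each generator passes to the closed linear span, this is precisely the assertion $F\in L^2\left(\Omega,\mathbb{P}\right)\ominus\left\{ \varphi\circ X\right\}$. This direction uses nothing beyond the adjoint relation and the continuity of the inner product.

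The reverse implication (i)$\Longrightarrow$(ii) carries the real content. Fixing $X\in\mathscr{W}$, hypothesis (i) says $\left\langle T_X^*F,\varphi\right\rangle _{L^2\left(\mathbb{R},\mu_X\right)}=0$ for every $\varphi\in\mathscr{C}$. To conclude that $T_X^*F=0$ as an element of $L^2\left(\mathbb{R},\mu_X\right)$, I would invoke the density of $\mathscr{C}$ in $L^2\left(\mathbb{R},\mu_X\right)$: because $\mu_X=\mathbb{P}\circ X^{-1}$ is a finite (indeed probability) Borel measure on $\mathbb{R}$, it is inner/outer regular, so $C_c\left(\mathbb{R}\right)\subset\mathscr{C}$ is dense in $L^2\left(\mathbb{R},\mu_X\right)$. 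A vector orthogonal to a dense set vanishes, whence $T_X^*F=0$; since $X\in\mathscr{W}$ was arbitrary, (ii) follows. The main (and essentially only non-formal) obstacle is this density step; the complex case $\mathscr{C}=C_b\left(\mathbb{R},\mathbb{C}\right)$ is handled identically, noting that $C_c$ is dense there as well.
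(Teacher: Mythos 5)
Your proposal is correct and follows essentially the same route as the paper: the paper's proof consists precisely of the adjoint identity $\int_{\mathbb{R}}T_{X}^{*}\left(G\right)\varphi\,d\mu_{X}=\mathbb{E}\left(G\left(\varphi\circ X\right)\right)$ together with the formula \eqref{eq:A4}, declared ``immediate.'' Your write-up merely makes explicit what the paper leaves tacit --- passing orthogonality to the closed span for (ii)$\Rightarrow$(i), and the density of $C_{c}\left(\mathbb{R}\right)\subset\mathscr{C}$ in $L^{2}\left(\mathbb{R},\mu_{X}\right)$ (valid since $\mu_{X}$ is a finite Borel, hence Radon, measure) for (i)$\Rightarrow$(ii) --- which is a faithful filling-in rather than a different argument.
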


\begin{proof}
The result is immediate from \eqref{eq:A4} and the following: 
\[
\int_{\mathbb{R}}T_{X}^{*}\left(G\right)\varphi\,d\mu_{X}=\mathbb{E}\left(G\left(\varphi\circ X\right)\right),
\]
valid for $\forall G\in L^{2}\left(\Omega,\mathbb{P}\right)$, and
all $\varphi\in\mathscr{C}$. 
\end{proof}
\begin{question*}
Given $F\in L^{2}\left(J^{m}\right)$, how do we get representations
of $F$ in terms of coordinate functions $J^{m}\ni x\rightarrow w\cdot x\in\mathbb{R}$?
Here, $w\cdot x=w^{T}x=w_{1}x_{1}+\cdots+w_{m}x_{m}$.
\end{question*}
If $L\in\mathscr{B}$, a Borel set in $\mathbb{R}$, then the respective
measures $\mu_{w}$ and $\mu_{X}$ from Figures \ref{fig:Fdim1} \&
\ref{fig:Fdim2} are as follows (distributions of random variables):
\begin{align}
\mu_{w}\left(L\right) & =\lambda_{m}\left(\left\{ x\in J^{m}\mid X_{w}\left(x\right):=w^{T}x\in L\right\} \right),\\
\mu_{X}\left(L\right) & =\mathbb{P}\left(\left\{ w\in\Omega\mid X\left(w\right)\in L\right\} \right).\label{eq:A8}
\end{align}
Equivalently, $\mu_{w}=\lambda_{m}\circ X_{w}^{-1}$, and $\mu_{X}=\mathbb{P}\circ X^{-1}$;
see also \eqref{eq:A1} \& \eqref{eq:A2}.

Let $\mathscr{C}:=C_{b}\left(\mathbb{R},\mathbb{R}\right)$ or $C_{b}\left(\mathbb{R},\mathbb{C}\right)$.
For $\varphi\in\mathscr{C}$, we consider $x\longrightarrow\varphi\left(w^{T}x\right)$
as a function on $J^{m}$. See Figures \ref{fig:Fdim1}--\ref{fig:Fdim2}.
And we define the following subspace
\begin{equation}
\mathscr{H}\left(w\right)=\overline{\left\{ \varphi\left(w^{T}x\right)\mathrel{;}\varphi\in\mathscr{C}\right\} }^{L^{2}\left(J^{m}\right)}.\label{eq:A9}
\end{equation}
(The overbar means closure, and the superscript refers to the norm.)
\begin{question*}
What are minimal subsets $W$ of vectors $w\in\mathbb{R}^{m}\backslash\left(0\right)$
such that the closed span of $\left\{ \mathscr{H}\left(w\right)\right\} _{w\in W}$
is equal to $L^{2}\left(J^{m}\right)$?
\end{question*}
For additional details regarding probability spaces, random variables
and distributions, see e.g., \cite{MR3798397,MR3799634,MR3803594,MR3830636}.
Also see \cite{MR2172247,MR2376788,MR3025990,MR2558696} for approximation
and Kolmogorov's superposition theorem.
\begin{rem}
Our present results are partially motivated by the following question:
\textquotedblleft How do neural nets (NN) learn?\textquotedblright{}
For example, consider a single (hidden) layer neural net. With a specified
starting point given as an arbitrary $w$ (could be set to zero);
how will it then be updated through \textquotedblleft backward propagation\textquotedblright ,
or \textquotedblleft backprop\textquotedblright , as each training
example passes through the system? Note that the process aims for
the output to then be able to approximate uniformly any continuous
function, i.e., (in the language of learning machines) the system
learns a hypothesis. One thing that makes this successful for neural
network (NN) is that backprop is fast enough to be able to train very
large networks.

However, if $w$ is fixed (none zero), and we modify $\varphi$ as
in \eqref{eq:A9}, then there are several related questions:
\begin{enumerate}
\item How to choose the initial $\varphi$?

In standard NN-models, $w$ is fixed from the outset as activation
function. A popular choice in recent literature is the rectified linear
unit (ReLU); see e.g., \cite{Nrelu}. Other options include sigmoid,
ArcTan, etc.
\item As training examples pass through the system, how is $\varphi$ updated?
One would expect a variant of backprop.
\item Is this version of backprop, if exists, fast enough to handle large
networks?
\end{enumerate}
In view of (ii), getting the optimal $\varphi$ is harder than training
$w$ in standard NNs. In a standard NN (assuming single layer), $\varphi$
is fixed, and $w$ is updated recursively. Note that $w$ lives in
a finite dimensional space.

If $w$ is fixed instead (see \eqref{eq:A9}), training $\varphi$
means one seeks the optimal solution in the set of \emph{all} continuous
functions, or differentiable functions, satisfying certain conditions
(see \prettyref{sec:ck} below). The parameter $\varphi$, now a function,
must be modified slightly at each iteration. The ambient space for
$\varphi$ is infinite-dimensional.
\end{rem}

\begin{example}
In the special case, when $\varphi$ is fixed, we have a standard
NN-model which serves to \textquotedblleft train\textquotedblright{}
$w$. See \prettyref{fig:nn-1} below. Our present setup is much more
subtle, and it is hard to compare this to the case when $w$ is fixed,
and $\varphi$ varies. Our present results serve to motivate a number
of algorithmic approaches to the \textquotedblleft hard\textquotedblright{}
case.

In \prettyref{fig:nn-1}, we illustrate the structure of a standard
$L$-layer neural net for multiclass classification. (For example,
in classical handwritten digit recognition, the number of classes
is 10. See \prettyref{fig:nn-2}.) The parameters are specified as
follows:
\begin{itemize}
\item $x\in M_{m,n}$, $n$ training examples, each of dimension $m$;
\item $y\in M_{k,n}$, $k$ classes, where $y\left(\cdot,j\right)$ is the
standard basis in $\mathbb{R}^{k}$. 
\item $w_{j}\in M_{s_{j},s_{j-1}}$; $s_{j}=$ number of units in layer
$j$, $s_{0}=m$.
\item $b_{j}\in\mathbb{R}^{s_{j}}$, ``bias''.
\item $g_{j}:M_{s_{j},n}\rightarrow M_{s_{j},n}$, $g_{j}\left(z\right)=\left(g_{j}\left(z_{s,t}\right)\right)$,
for $z=\left(z_{st}\right)\in M_{s_{j},n}$.
\end{itemize}
During the training process, the system iterates to approximate the
optimal parameters:
\begin{itemize}
\item Forward propagation;
\item Calculation of cost;
\item Backward propagation to obtain $dw$, $db$;
\item Update parameters, $w=w-\beta dw$, $b=b-\beta db$, where $\beta>0$
denotes the learning rate.
\end{itemize}
\end{example}

\begin{figure}
$x\longrightarrow g_{1}\left(w_{1}\left(\cdot\right)+b_{1}\right)\longrightarrow g_{2}\left(w_{2}\left(\cdot\right)+b_{2}\right)\longrightarrow\cdots\longrightarrow g_{L}\left(w_{L}\left(\cdot\right)+b_{L}\right)$

\caption{\label{fig:nn-1}basic structure of a standard $L$-layer neural net}
\end{figure}

\begin{figure}
\includegraphics[width=0.2\textwidth]{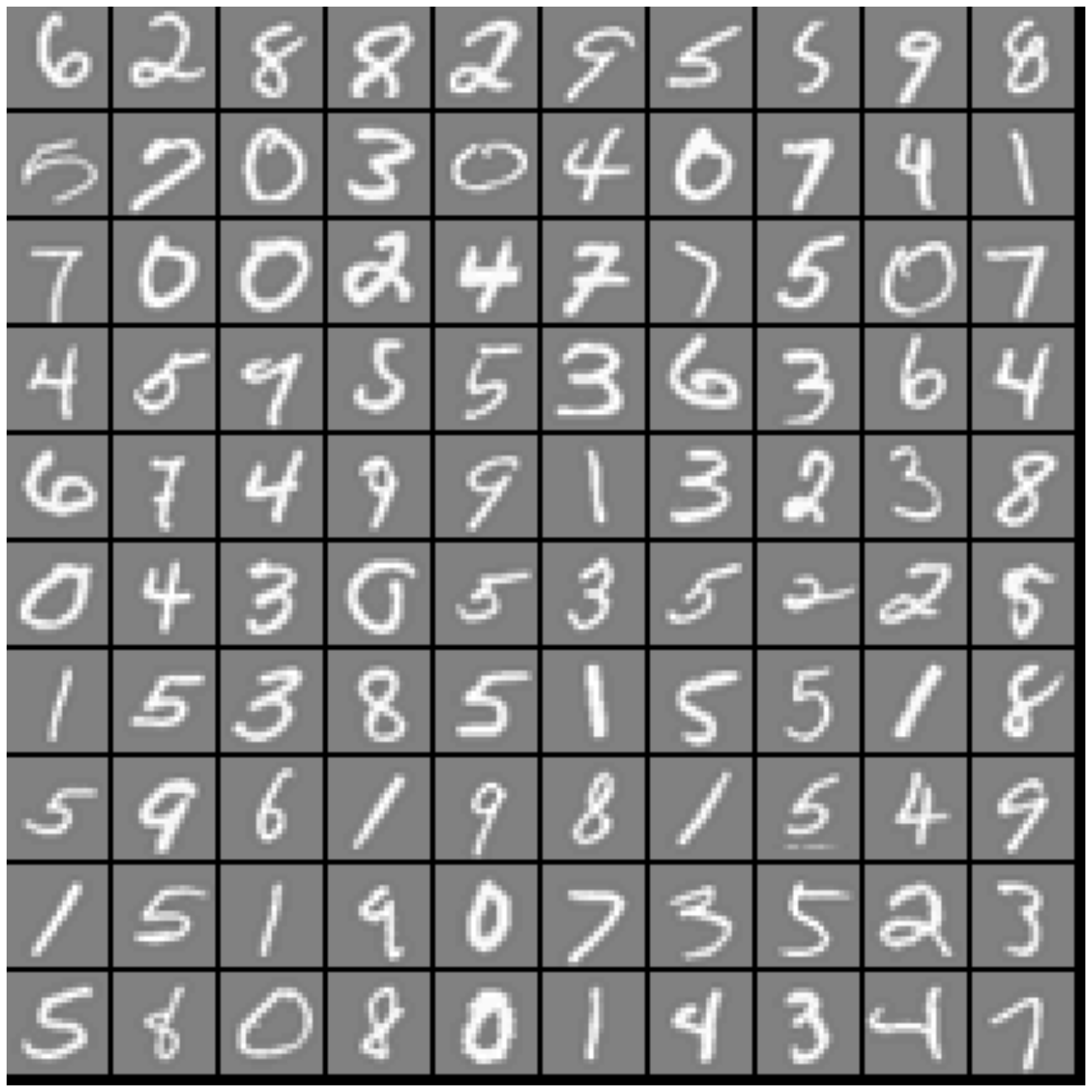}

\caption{\label{fig:nn-2}Example: handwritten digit recognition.}
\end{figure}

\section{\label{sec:ck}Theorems by Cybenko and Kolmogorov}

Our present investigations are motivated in part by the following
Universal Approximation Theorem by Cybenko: 
\begin{thm}[Cybenko \cite{MR1015670,MR1178852}]
\label{thm:B1}Let $\varphi:\mathbb{R}\rightarrow\mathbb{R}$ be
given, satisfying the following two conditions: 
\begin{enumerate}
\item \label{enu:B1-1}$\varphi$ is continuous, and
\item \label{enu:B1-2}the following two limits exist:
\begin{enumerate}
\item $\lim_{t\rightarrow-\infty}\varphi\left(t\right)=0$, and 
\item $\lim_{t\rightarrow+\infty}\varphi\left(t\right)=1$. 
\end{enumerate}
\end{enumerate}
Then the span of the double-indexed functions 
\begin{equation}
\left\{ \varphi\left(w^{T}x+\lambda\right)\right\} _{w\in\mathbb{R}^{m}\backslash\left(0\right),\,\lambda\in\mathbb{R}}\label{eq:B1}
\end{equation}
is uniformly dense in $C\left(J^{m}\right)$.

\end{thm}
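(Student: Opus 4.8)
The plan is to prove the contrapositive by duality. Density of the span in $C(J^{m})$ is equivalent, via Hahn--Banach, to the assertion that no nonzero continuous linear functional annihilates every $\varphi\left(w^{T}x+\lambda\right)$. Since $J^{m}$ is compact, the Riesz representation theorem identifies each such functional with integration against a finite signed regular Borel measure $\mu$. Thus the entire theorem reduces to the following \emph{discrimination} property: if
\[
\int_{J^{m}}\varphi\left(w^{T}x+\lambda\right)d\mu\left(x\right)=0\qquad\text{for all }w\in\mathbb{R}^{m}\backslash\left(0\right),\ \lambda\in\mathbb{R},
\]
then $\mu=0$. Establishing this property is the heart of the argument.

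To exploit the sigmoidal hypothesis \eqref{enu:B1-2}, I would freeze a direction $w$ and a shift $b$ and use the scaling freedom already present in the family: for $s>0$ the function $x\mapsto\varphi\left(s\left(w^{T}x+b\right)+\theta\right)$ equals $\varphi\left(\left(sw\right)^{T}x+\lambda\right)$ with $\lambda=sb+\theta$, hence belongs to our collection. As $s\to+\infty$, hypothesis \eqref{enu:B1-2} forces the pointwise limit
\[
\varphi\left(s\left(w^{T}x+b\right)+\theta\right)\longrightarrow\gamma\left(x\right):=\begin{cases}1 & w^{T}x+b>0,\\ \varphi\left(\theta\right) & w^{T}x+b=0,\\ 0 & w^{T}x+b<0,\end{cases}
\]
and since $\varphi$ is continuous with finite limits it is bounded, so the dominated (bounded) convergence theorem applies against the finite measure $\mu$. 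This yields $\varphi\left(\theta\right)\mu\left(\Pi_{w,b}\right)+\mu\left(H_{w,b}\right)=0$, where $\Pi_{w,b}=\left\{ x:w^{T}x+b=0\right\} $ and $H_{w,b}=\left\{ x:w^{T}x+b>0\right\} $. Letting $\theta\to+\infty$ and then $\theta\to-\infty$ in this identity (again by \eqref{enu:B1-2}) separates the two terms and gives $\mu\left(H_{w,b}\right)=0$ and $\mu\left(\Pi_{w,b}\right)=0$ for every half-space and every hyperplane.

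The final step converts ``$\mu$ kills all half-spaces'' into ``$\mu=0$,'' and this is where the transform viewpoint of the paper enters naturally. For fixed $w$, let $\nu_{w}:=\mu\circ X_{w}^{-1}$ be the pushforward of $\mu$ under $X_{w}\left(x\right)=w^{T}x$, a finite signed measure on $\mathbb{R}$. The half-space identity reads $\nu_{w}\left(\left(c,\infty\right)\right)=0$ for all $c$, so $\nu_{w}$ vanishes on the $\pi$-system of half-open intervals $\left(a,b\right]$ and hence $\nu_{w}\equiv0$. Consequently, for every $w$ and every $s\in\mathbb{R}$,
\[
\widehat{\mu}\left(sw\right)=\int_{J^{m}}e^{is\,w^{T}x}d\mu\left(x\right)=\int_{\mathbb{R}}e^{ist}d\nu_{w}\left(t\right)=0,
\]
so the Fourier transform of $\mu$ vanishes identically; equivalently, all one-dimensional (Radon) slices of $\mu$ are zero. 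By injectivity of the Fourier transform on finite signed measures, $\mu=0$, which completes the reduction and hence the theorem.

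I expect the main obstacle to be precisely the passage from the half-space identity to $\mu=0$: one must keep $\mu$ a genuine finite signed regular Borel measure so that the pushforward, the dominated-convergence step, and Fourier (or Radon) uniqueness are all legitimate, and one must correctly retain the boundary term $\varphi\left(\theta\right)\mu\left(\Pi_{w,b}\right)$ rather than discarding the hyperplane contribution. The remaining ingredients---Hahn--Banach, Riesz representation, and bounded convergence---are routine once this discrimination property is in hand.
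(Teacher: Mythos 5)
Your proof is correct, and its first two steps coincide with the paper's sketch: the Hahn--Banach/Riesz reduction to an annihilating finite signed Borel measure $\mu$, and the scaling limit $s\to+\infty$ (the paper's $\lambda\to\pm\infty$) combined with bounded convergence and the limits of $\varphi$ to extract $\mu\left(H_{w,b}\right)+\varphi\left(\theta\right)\mu\left(\Pi_{w,b}\right)=0$, then varying $\theta$ to kill both the half-space and hyperplane masses. Where you genuinely diverge is the closing step. The paper simply asserts that ``the half-planes generate the $\sigma$-algebra on $J^{m}$,'' which, taken literally, does not suffice: the collection of sets on which a \emph{signed} measure vanishes is a $\lambda$-system, not a $\sigma$-algebra, and half-spaces do not form a $\pi$-system (an intersection of two half-spaces is not a half-space), so generation alone does not yield $\mu=0$. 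Your route --- push $\mu$ forward under $X_{w}\left(x\right)=w^{T}x$ to a signed measure $\nu_{w}$ on $\mathbb{R}$, note that the half-space identities say $\nu_{w}\left(\left(c,\infty\right)\right)=0$ for all $c$, kill $\nu_{w}$ on the $\pi$-system of intervals $\left(a,b\right]$ (with $\nu_{w}\left(\mathbb{R}\right)=0$ from the increasing union $\left(-n,\infty\right)\uparrow\mathbb{R}$, which you should state explicitly since $\mathbb{R}$ itself is not in your $\pi$-system), and conclude $\widehat{\mu}\left(sw\right)=\int e^{ist}\,d\nu_{w}\left(t\right)=0$ for all $s,w$, hence $\mu=0$ by Fourier--Stieltjes injectivity --- is in fact how Cybenko himself closes the argument (he phrases it via the bounded functional $h\mapsto\int h\left(w^{T}x\right)d\mu$ on $L^{\infty}\left(\mathbb{R},\nu_{w}\right)$ applied to trigonometric functions). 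So your version buys rigor exactly where the paper's sketch is glib, at the modest cost of invoking Fourier uniqueness for measures; it also dovetails with the paper's own transform-based viewpoint (compare \prettyref{lem:D3}, where the condition $\widehat{\mu}\left(tw\right)=0$ is made the central object).
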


\begin{proof}[Proof sketch]
 The required reasoning is three fold. Specifically, the proof establishes
the following three assertions:
\begin{enumerate}[label=(\arabic{enumi})]
\item The family of functions in \eqref{eq:B1} is dense in $C\left(J^{m}\right)$
$\Longleftrightarrow$ The integral of the function system \eqref{eq:B1}
being zero for Borel measures $\mu$ on $\mathbb{R}$ of finite total
variation, only when $\mu$ is zero.
\item Subcases: The integral with respect to $\mu$ is zero iff $\mu$ vanishes
on all the half-planes, $\Pi_{w,s}$, $\Pi_{w,s}^{+}$ and $\Pi_{w,s}^{-}$;
see details below.
\item The half-planes generate the $\sigma$-algebra on $J^{m}$.
\end{enumerate}
Since most of the ideas going into the proof may be found in the papers
\cite{MR1015670,MR1178852}, we shall be brief: We provide the following
sketch for the benefit of the reader.

Let $\varphi$ be as specified, and let $\mu$ be a Borel measure
on $\mathbb{R}$ of finite total variation. For $\lambda,s,t\in\mathbb{R}$,
and $w\in\mathbb{R}^{m}\backslash\left(0\right)$, consider the following
family of functions on $\mathbb{R}^{m}$:
\begin{equation}
\varphi\left(\lambda\left(w^{T}x+s\right)+t\right),\quad x\in J^{m};
\end{equation}
and set 
\begin{align*}
\Pi_{w,s} & =\left\{ x\in\mathbb{R}^{m}\mathrel{;}w^{T}x+s=0\right\} ,\\
H_{w,s}^{+} & =\left\{ x\in\mathbb{R}^{m}\mathrel{;}w^{T}x+s>0\right\} ,
\end{align*}
and 
\[
H_{w,s}^{-}=\left\{ x\in\mathbb{R}^{m}\mathrel{;}w^{T}x+s<0\right\} .
\]
Then 
\begin{align*}
 & \lim_{\lambda\rightarrow+\infty}\int_{J^{m}}\varphi\left(\lambda\left(w^{T}x+s\right)+t\right)d\mu\left(x\right)\\
= & \mu\left(H_{w,s}^{+}\right)+\varphi\left(t\right)\mu\left(\Pi_{w,s}\right);
\end{align*}
and 
\begin{align*}
 & \lim_{\lambda\rightarrow-\infty}\int_{J^{m}}\varphi\left(\lambda\left(w^{T}x+s\right)+t\right)d\mu\left(x\right)\\
= & \mu\left(H_{w,s}^{-}\right)+\varphi\left(t\right)\mu\left(\Pi_{w,s}\right).
\end{align*}
\end{proof}
For additional details regarding the approximation problems, see e.g.,
\cite{MR3816236,MR3816652,MR3836380,MR679449,MR740851,MR2817068}.

A second motivation for our present considerations is Kolmogorov\textquoteright s
superposition theorem. The latter in turn is Kolmogorov's reply to
Hilbert's 13th Problem.
\begin{thm}[Kolmogorov, see \cite{MR2558696}]
\label{thm:B2}Let $f:\left[0,1\right]^{n}\rightarrow\mathbb{R}$
be an arbitrary multivariable continuous function. Then it has the
representation
\[
f\left(x_{1},\dots,x_{n}\right)=\sum_{q=0}^{2n}\Phi_{q}\left(\sum_{p=1}^{n}\psi_{q,p}\left(x_{p}\right)\right),
\]
with continuous one-dimensional outer and inner functions $\Phi_{q}$
and $\psi_{q,p}$. All these functions $\Phi_{q}$ and $\psi_{q,p}$
are defined on the real line. The inner functions $\psi_{q,p}$ are
independent of the function $f$. 
\end{thm}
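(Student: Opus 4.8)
The plan is to decouple the construction of the \emph{universal} inner functions $\psi_{q,p}$ (which must serve all $f$ at once) from the $f$-dependent outer functions, and to produce the latter by a geometrically convergent iteration. I would in fact aim for the stronger conclusion that a single outer function $\Phi=\Phi_q$, independent of $q$, suffices, since this is what the iteration naturally yields. The first and decisive step is to construct continuous strictly increasing functions $\psi_{q,p}:[0,1]\to\mathbb{R}$, for $q=0,\dots,2n$ and $p=1,\dots,n$, so that the $2n+1$ maps $\xi_q(x):=\sum_{p=1}^{n}\psi_{q,p}(x_p)$ enjoy the following \emph{separation property}: for every sufficiently fine grid of cubes tiling $[0,1]^n$, and for each fixed $q$, the images under $\xi_q$ of the grid cubes are pairwise disjoint intervals of the real line. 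Heuristically $\xi_q$ folds the cube injectively onto a union of well-separated intervals. I would build the $\psi_{q,p}$ by inductive refinement over successive scales, at each stage perturbing the functions slightly --- preserving monotonicity and uniform convergence --- so as to force disjointness at the next finer scale while retaining it at coarser scales; the uniform limit then has the property at all scales. (Equivalently, a Baire-category argument shows that the inner systems with the separation property form a dense $G_\delta$.)

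Next comes the combinatorial heart, a covering lemma. I would tile $[0,1]^n$ by a grid of small cubes and consider $2n+1$ rigidly translated copies of that grid, the translation vectors chosen in tandem with the construction above so that any point lies in the thin ``bad'' boundary layer of a given translated grid for at most $n$ of the $2n+1$ indices. By the pigeonhole principle, every $x\in[0,1]^n$ then lies in the interior of a grid cube for at least $n+1$ of the values $q\in\{0,\dots,2n\}$. This is precisely the redundancy that will absorb the errors coming from the few ``bad'' summands.

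Finally I would run the iteration that produces $\Phi$. Given continuous $f$ with $\|f\|_\infty\le 1$, I would define $\Phi$ on the (disjoint!) image intervals of $\xi_q$ by assigning, on the interval corresponding to a grid cube $Q$, the value $\tfrac{1}{2n+1}$ times a sampled value of $f$ on $Q$, and interpolating continuously between intervals; disjointness of the images is exactly what makes this assignment consistent, so that $\Phi$ is well defined and continuous. Using the covering lemma --- at each point at least $n+1$ of the $2n+1$ terms are ``good,'' while the remaining at most $n$ are controlled by the modulus of continuity of $f$ together with the smallness of $\|\Phi\|_\infty$ --- one shows
\[
\left\|\,f-\sum_{q=0}^{2n}\Phi\bigl(\xi_q(x)\bigr)\right\|_\infty\le\theta
\]
for a fixed ratio $\theta<1$ depending only on $n$. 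Replacing $f$ by this residual, of norm at most $\theta$, and iterating produces outer functions $\Phi_k$ with $\|\Phi_k\|_\infty\le C\,\theta^{\,k}$; the uniformly convergent sum $\Phi:=\sum_{k}\Phi_k$ gives the exact representation, with the inner functions $\psi_{q,p}$ unchanged throughout, establishing in particular their independence of $f$.

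The main obstacle is the first step: producing inner functions that enforce the disjoint-image property \emph{simultaneously at all scales and for all $2n+1$ coordinate maps at once}. Managing the successive perturbations so that monotonicity and uniform convergence survive, while the separation is locked in at each finer scale without being destroyed at the coarser ones, is the delicate technical core. Once those functions are in hand, the pigeonhole covering and the geometric iteration are comparatively routine.
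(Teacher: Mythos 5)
You should first note a structural point: the paper does not prove \prettyref{thm:B2} at all --- it is quoted as a classical result with a citation to the literature, and only Cybenko's \prettyref{thm:B1} receives a proof sketch (via duality with annihilating measures and sigmoid limits on half-spaces). So there is no internal argument to match yours against; your proposal must be judged against the classical proofs, and on that score it is essentially correct: it is the standard Kolmogorov--Lorentz--Sprecher scheme, with Kahane's Baire-category refinement mentioned for the construction of the inner functions. The three ingredients are all in the right place: universal monotone $\psi_{q,p}$ forcing the maps $\xi_q$ to send the cubes of every sufficiently fine ($q$-shifted) grid onto pairwise disjoint intervals; the pigeonhole covering lemma, which works coordinatewise (for each of the $n$ coordinates, $x_p$ falls in a gap of the $q$-th shifted partition for at most one $q$, hence at most $n$ bad indices and at least $n+1$ good ones among the $2n+1$); and the contraction iteration. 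Your normalization $\tfrac{1}{2n+1}$ does yield a workable ratio: with $m\geq n+1$ good indices one gets
\[
\Bigl|f\left(x\right)-\sum_{q=0}^{2n}\Phi\bigl(\xi_{q}\left(x\right)\bigr)\Bigr|\leq\left(1-\tfrac{m}{2n+1}\right)\left\Vert f\right\Vert +\tfrac{2n+1-m}{2n+1}\left\Vert f\right\Vert +O\left(\omega_{f}\right)\leq\tfrac{2n}{2n+1}\left\Vert f\right\Vert +O\left(\omega_{f}\right),
\]
so $\theta$ slightly above $\tfrac{2n}{2n+1}$ suffices, and summing the geometrically decaying $\Phi_{k}$ gives the exact representation with the $\psi_{q,p}$ fixed throughout --- which is precisely their independence of $f$. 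Two points deserve explicit care in a full write-up: your single-$\Phi$ strengthening requires the image intervals to be disjoint not only over cubes $Q$ for fixed $q$ but across all pairs $\left(q,Q\right)$ (this can be arranged, as Lorentz does, but it is an extra demand on the inner construction); and, as you correctly identify, the multiscale separation property of the inner functions is the genuinely delicate step, which your sketch asserts rather than carries out. Compared with the paper's own transform-based machinery (annihilating measures, Radon and Fourier transforms, which quantify the \emph{defect} of a fixed direction set $W$ and show non-density for finite $W$), your route is constructive and delivers an exact finite superposition, while the paper's framework buys quantitative orthogonal decompositions and explicit obstructions instead.
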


Hilbert originally posed his 13th problem for algebraic functions
(Hilbert 1927, ``...Existenz von algebraischen Funktionen...'',
i.e., ``...existence of algebraic functions...''.) Hilbert asked
for a process whereby a function of several variables may possibly
be constructed using only functions of two variables. Hilbert's conjecture,
that it is not always possible to find such a solution, was disproven
in 1957.

However, there also was a later version of the problem where Hilbert
asked instead whether there are solution in the class of continuous
functions. It is the second version of Hilbert's 13th problem that
concerns us here, by way of motivation. More specifically, a generalization
of the second (``continuous'') variant of Hilbert's problem is the
question: Can every continuous function of three variables be expressed
as a composition of finitely many continuous functions of two variables?
The affirmative answer to this general question was given in 1957
by Vladimir Arnold. Earlier Kolmogorov had shown that any function
of several variables can be constructed with a finite number of three-variable
functions. Arnold then expanded on this work to show that only two-variable
functions were in fact required, thus answering Hilbert's question
in the context of continuous functions. In our present consideration,
we shall consider versions of the question for functions on the hyper
cube $J^{m}$, where $m$ is finite, but \textquotedblleft large.\textquotedblright{}
For the multivariable functions, we shall consider both the continuous
case, as well as the variant for the problem in the Hilbert space
$L^{2}\left(J^{m}\right)$.

\subsection{An infinite-dimensional analogue of Cybenko's theorem}

Let $\dot{\mathbb{R}}$ denote the one-point compactification of $\mathbb{R}$,
and consider the infinite product space 
\begin{equation}
\Omega:=\dot{\mathbb{R}}^{\mathbb{N}}=\dot{\mathbb{R}}\times\dot{\mathbb{R}}\times\dot{\mathbb{R}}\times\cdots.\label{eq:B3}
\end{equation}

On $\Omega$, we shall consider the topology generated by the \emph{cylinder
sets}; so that $\Omega$ is compact, by the Tychonov-theorem. 

The $\sigma$-algebra generated by the cylinder-sets will be denoted
$\mathscr{F}$; and we shall be considering probability spaces $\left(\Omega,\mathscr{F},\mathbb{P}\right)$. 

A system $\mathscr{W}$ of random variables $X:\Omega\rightarrow\mathbb{R}$
is said to be \emph{separating} iff (Def) the following subsets of
$\Omega$ generate $\mathscr{F}$: 
\begin{equation}
\begin{split}\left\{ X\left(\cdot\right)>s\right\}  & =X^{-1}\left((s,\infty]\right),\;\text{and}\\
\left\{ X\left(\cdot\right)=s\right\}  & =X^{-1}\left(\left\{ s\right\} \right),
\end{split}
\label{eq:B4}
\end{equation}
where $s\in\mathbb{R}$, and $X\in\mathscr{W}$ are arbitrary. More
precisely, we require that, if a signed measure $\mu$ on $\left(\Omega,\mathscr{F}\right)$
of finite total variation vanishes on the sets in \eqref{eq:B4},
then $\mu$ must be zero. 
\begin{thm}
Let $\left(\Omega,\mathscr{F},\mathbb{P}\right)$ be as above; let
$\mathscr{W}$ be a separating system of continuous random variables;
and let $\varphi:\mathbb{R}\rightarrow\mathbb{R}$ be a fixed function
satisfying conditions \eqref{enu:B1-1} \& \eqref{enu:B1-2} in \prettyref{thm:B1}.
Then the span of the double-indexed system of functions (on $\Omega$),
\begin{equation}
\left\{ \varphi\left(X\left(\cdot\right)+\lambda\right)\right\} _{X\in\mathscr{W},\,\lambda\in\mathbb{R}}
\end{equation}
is uniformly dense in $C\left(\Omega\right)$; i.e., dense w.r.t.
the $\left\Vert \cdot\right\Vert _{\infty}$-norm on $C\left(\Omega\right)$. 
\end{thm}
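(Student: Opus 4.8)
The plan is to run the Hahn--Banach/Riesz duality argument behind \prettyref{thm:B1} in the abstract compact setting, with the separating hypothesis \eqref{eq:B4} playing the role of ``the half-planes generate the Borel $\sigma$-algebra.'' Since $\Omega=\dot{\mathbb{R}}^{\mathbb{N}}$ is a countable product of compact metric spaces it is itself compact and metrizable, the cylinder $\sigma$-algebra $\mathscr{F}$ coincides with the Borel $\sigma$-algebra, and by Riesz--Markov--Kakutani the dual of $C\left(\Omega\right)$ is the space $M\left(\Omega\right)$ of finite signed (automatically regular) Borel measures. By Hahn--Banach, the span of $\left\{ \varphi\left(X\left(\cdot\right)+\lambda\right)\right\} $ is uniformly dense in $C\left(\Omega\right)$ if and only if its annihilator in $M\left(\Omega\right)$ is trivial. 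Thus the theorem reduces to the implication: if $\mu\in M\left(\Omega\right)$ satisfies
\begin{equation}
\int_{\Omega}\varphi\left(X\left(\omega\right)+\lambda\right)\,d\mu\left(\omega\right)=0\qquad\text{for all }X\in\mathscr{W},\ \lambda\in\mathbb{R},
\end{equation}
then $\mu=0$.

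First I would fix $X\in\mathscr{W}$ and push $\mu$ forward along $X$, setting $\mu_{X}:=\mu\circ X^{-1}$, a finite signed measure on $\mathbb{R}$; because $X$ is continuous on the compact space $\Omega$ it is bounded, so $\mu_{X}$ has compact support and every limit below is dominated. The annihilation condition becomes $\int_{\mathbb{R}}\varphi\left(t+\lambda\right)\,d\mu_{X}\left(t\right)=0$, and, exactly as in the proof sketch of \prettyref{thm:B1}, I would feed the rescaled functions $\varphi\left(\lambda\left(t+s\right)+t'\right)$ into this identity and let $\lambda\to\pm\infty$. Using condition \eqref{enu:B1-2} and dominated convergence, the two one-sided limits evaluate to $\mu_{X}\left(H_{s}^{+}\right)+\varphi\left(t'\right)\mu_{X}\left(\left\{ -s\right\} \right)$ and $\mu_{X}\left(H_{s}^{-}\right)+\varphi\left(t'\right)\mu_{X}\left(\left\{ -s\right\} \right)$, where $H_{s}^{\pm}$ are the two open half-lines cut out by the threshold $-s$. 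Letting $t'\to\pm\infty$ (so $\varphi\left(t'\right)\to1,0$) and subtracting then forces $\mu_{X}\left(\left(s,\infty\right)\right)=0$ and $\mu_{X}\left(\left\{ s\right\} \right)=0$ for every $s$. Translating back through $X$, this says precisely
\begin{equation}
\mu\left(\left\{ X\left(\cdot\right)>s\right\} \right)=\mu\left(X^{-1}\left(\left(s,\infty\right]\right)\right)=0,\qquad\mu\left(\left\{ X\left(\cdot\right)=s\right\} \right)=\mu\left(X^{-1}\left(\left\{ s\right\} \right)\right)=0,
\end{equation}
for all $X\in\mathscr{W}$ and all $s\in\mathbb{R}$.

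These are exactly the sets appearing in \eqref{eq:B4}. Since $\mathscr{W}$ is a separating system, a finite signed measure that vanishes on all of them must be the zero measure; hence $\mu=0$, and density follows from the first paragraph.

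The step that does the real work is the one-variable sharpening limit, and the subtlety I expect to have to address carefully is the availability of the \emph{rescaled} directions $\varphi\left(\lambda X+\cdots\right)$: just as \prettyref{thm:B1} relies on $w$ ranging over all of $\mathbb{R}^{m}\backslash\left(0\right)$ (so that $\lambda w$ is again admissible), the reduction above needs $\mathscr{W}$ to be stable under nonzero scalar multiples, which is what turns the plain translates $\varphi\left(t+\lambda\right)$ into indicator (step) functions in the limit — this is the precise analogue of the finite-dimensional hypothesis, and without it the translates of a single $\varphi$ need not suffice. The remaining care is bookkeeping: confirming that $\mathscr{F}$ is the Borel $\sigma$-algebra, so the measure furnished by Riesz representation is exactly of the type to which the separating hypothesis applies; peeling off the point mass $\mu_{X}\left(\left\{ -s\right\} \right)$, which carries the value $\varphi\left(t'\right)$ at the ``kink'' and must be isolated before the half-line terms can be read off; and noting that since each $X$ is $\mathbb{R}$-valued the point at infinity of $\dot{\mathbb{R}}$ never enters, so no mass escapes in the pushforward.
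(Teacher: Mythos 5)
Your proposal is correct and follows essentially the same route as the paper's own proof: dualize via Riesz representation and Hahn--Banach, test the annihilating measure against the rescaled functions $\varphi\left(\lambda\left(X\left(\cdot\right)+s\right)+t\right)$, let $\lambda\rightarrow\pm\infty$ to extract $\mu\left(\left\{ X>s\right\} \right)=0$ and $\mu\left(\left\{ X=s\right\} \right)=0$, and invoke the separating hypothesis; your pushforward to $\mu_{X}$ on $\mathbb{R}$ is only a cosmetic repackaging of the paper's computation carried out directly on $\Omega$. The scaling caveat you flag is genuine, but it applies equally to the paper's proof, which uses the rescaled family without remarking that the stated hypothesis (translates $\varphi\left(X\left(\cdot\right)+\lambda\right)$ only) presumes $\mathscr{W}$ is closed under nonzero scalar multiples, exactly as $w\in\mathbb{R}^{m}\backslash\left(0\right)$ makes automatic in the finite-dimensional case.
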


\begin{proof}
As the present arguments are close to those outlined in the proof
of \prettyref{thm:B1} above, we shall only sketch the details. 

First consider the following system of functions on $\Omega$
\begin{equation}
\varphi\left(\lambda\left(X\left(\cdot\right)+s\right)+t\right)
\end{equation}
where $\varphi$ is as stated, i.e., satisfying conditions \eqref{enu:B1-1}
\& \eqref{enu:B1-2} in \prettyref{thm:B1}; and where $X\in\mathscr{W}$,
and $\lambda,s,t\in\mathbb{R}$. 

Let $\mu$ be a signed measure on $\left(\Omega,\mathscr{F}\right)$
of finite total variation. We now show that if 
\[
\int_{\Omega}\varphi\left(\lambda\left(X\left(\cdot\right)+s\right)+t\right)d\mu=0
\]
for all $X\in\mathscr{W}$, and $\lambda,s,t\in\mathbb{R}$, then
$\mu=0$. But 
\begin{alignat*}{2}
 &  &  & \lim_{\lambda\rightarrow+\infty}\int_{\Omega}\varphi\left(\lambda\left(X\left(\cdot\right)+s\right)+t\right)d\mu\\
 & = & \: & \mu\left(\left\{ X\left(\cdot\right)+s>0\right\} \right)+\varphi\left(t\right)\mu\left(\left\{ X\left(\cdot\right)+s=0\right\} \right),
\end{alignat*}
and the desired conclusion now follows precisely as in Cybenko's reasoning.
Recall that the family $\mathscr{W}=\left\{ X\left(\cdot\right)\right\} $
of random variables was assumed to be separating for $\left(\Omega,\mathscr{F}\right)$. 
\end{proof}
\begin{cor}
Consider the probability space $\left(\Omega,\mathscr{F},\mathbb{P}\right)$
in \eqref{eq:B3} where $\mathscr{F}$ is the product-cylinder $\sigma$-algebra
of subsets in $\Omega$. For $\omega=\left(x_{j}\right)_{j\in\mathbb{N}}\in\Omega$,
set 
\begin{equation}
X_{j}\left(\omega\right)=x_{j}\in\mathbb{R}.
\end{equation}
Then the \uline{algebra} $\mathscr{A}$ generated by 
\begin{equation}
\left\{ \varphi\circ X_{j}\mathrel{;}j\in\mathbb{N},\:\varphi\in\mathscr{C}\right\} \label{eq:B8}
\end{equation}
is uniformly dense in $C\left(\Omega\right)$. 
\end{cor}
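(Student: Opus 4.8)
The plan is to deduce the statement directly from the Stone--Weierstrass theorem applied on the compact Hausdorff space $\Omega=\dot{\mathbb{R}}^{\mathbb{N}}$. Recall from \eqref{eq:B3} that $\Omega$ is compact by Tychonov, and it is Hausdorff as a product of the Hausdorff spaces $\dot{\mathbb{R}}$. Since $\mathscr{A}$ is by construction a unital algebra of functions (generated by the family in \eqref{eq:B8}), it suffices to verify the three Stone--Weierstrass hypotheses: (i) $\mathscr{A}\subseteq C(\Omega)$; (ii) $\mathscr{A}$ separates the points of $\Omega$; and (iii) in the complex case $\mathscr{A}$ is closed under complex conjugation. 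Granting these, Stone--Weierstrass gives $\overline{\mathscr{A}}^{\,\|\cdot\|_{\infty}}=C(\Omega)$, which is the assertion.

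For (i), each coordinate map $X_{j}:\Omega\to\dot{\mathbb{R}}$, $X_{j}(\omega)=x_{j}$, is continuous, being a projection in the product topology. Composing with a continuous $\varphi$ yields a continuous generator $\varphi\circ X_{j}$, and sums and products of such remain in $C(\Omega)$, so $\mathscr{A}\subseteq C(\Omega)$. The one subtlety is that $X_{j}$ takes values in the compactification $\dot{\mathbb{R}}$, so I must read the admissible $\varphi$ in $\mathscr{C}$ as those extending continuously to $\dot{\mathbb{R}}$ (equivalently, $\varphi\in C_{b}(\mathbb{R})$ with a common two-sided limit at infinity); this is exactly the class for which $\varphi\circ X_{j}$ is continuous on all of $\Omega$. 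Hypothesis (iii) is immediate, since $\overline{\varphi}\in\mathscr{C}$ whenever $\varphi\in\mathscr{C}$, and conjugation commutes with composition and with the algebra operations. Finally the constant $1$ lies in $\mathscr{A}$ (take $\varphi\equiv1$), so $\mathscr{A}$ is unital.

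The heart of the argument is (ii). Given distinct points $\omega=(x_{j})$ and $\omega'=(x_{j}')$ in $\Omega$, they must differ in some coordinate $k$, i.e.\ $x_{k}\neq x_{k}'$ as elements of $\dot{\mathbb{R}}$. Since $\dot{\mathbb{R}}$ is compact Hausdorff, $C(\dot{\mathbb{R}})$ separates its points (Urysohn), so there is an admissible $\varphi$ with $\varphi(x_{k})\neq\varphi(x_{k}')$; then the single generator $\varphi\circ X_{k}\in\mathscr{A}$ separates $\omega$ from $\omega'$. Thus the coordinate system $\{X_{j}\}_{j\in\mathbb{N}}$ already separates points, and a fortiori so does $\mathscr{A}$.

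I expect the main obstacle to be the bookkeeping at the point at infinity, rather than the Stone--Weierstrass mechanism itself: one must ensure that the generators $\varphi\circ X_{j}$ really are continuous on the compactified product $\Omega$, which forces attention to the behavior of $\varphi$ at $\pm\infty$ and to how the single point $\infty\in\dot{\mathbb{R}}$ is separated from finite values. Once the admissible class of $\varphi$ is fixed so that (i) and (ii) both hold, the conclusion is immediate. An alternative that avoids the compactification would be to rerun the measure-theoretic argument behind \prettyref{thm:B1}: show that a finite signed measure $\mu$ on $(\Omega,\mathscr{F})$ annihilating every $\varphi\circ X_{j}$ must vanish on a separating family of cylinder sets, forcing $\mu=0$; but the Stone--Weierstrass route above is cleaner.
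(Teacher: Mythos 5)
Your proof is correct and takes essentially the same route as the paper's, which likewise reduces the corollary to Stone--Weierstrass plus the observation that a single generator $\varphi\circ X_{j}$ separates any two points differing in coordinate $j$. If anything, you are more careful than the paper: the paper silently applies arbitrary $\varphi\in\mathscr{C}=C_{b}(\mathbb{R})$, whereas continuity of $\varphi\circ X_{j}$ on $\Omega=\dot{\mathbb{R}}^{\mathbb{N}}$ requires $\varphi$ to extend continuously to the one-point compactification $\dot{\mathbb{R}}$ (i.e., a common limit at $\pm\infty$), and your explicit restriction of the admissible class --- together with the check that this smaller class still separates points of $\dot{\mathbb{R}}$ --- patches exactly that point.
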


\begin{proof}
By Stone-Weierstrass, we only need to verify that the algebra $\mathscr{A}$
from \eqref{eq:B8} \emph{separates points} in $\Omega$. But, if
$\omega\neq\omega'$ in $\Omega$, then there is an index $j$ such
that $X_{j}\left(\omega\right)\neq X_{j}\left(\omega'\right)$. Then
just pick a $\varphi\in\mathscr{C}$ s.t. $\varphi\left(X_{j}\left(\omega\right)\right)\neq\varphi\left(X_{j}\left(\omega'\right)\right)$. 
\end{proof}

\section{\label{sec:ps}Projective space of equivalence classes}
\begin{notation*}
We shall work with projective space $P\left(\mathbb{R}^{m}\right)$,
i.e., equivalence classes in $\mathbb{R}^{m}\backslash\left(0\right)$,
where $w\sim w'\Longleftrightarrow\exists t\in\mathbb{R}\backslash\left(0\right)\text{ s.t. \ensuremath{w'=tw}}$.
Set 
\begin{equation}
\mathscr{H}\left(w\right):=closure\left\{ \varphi\left(w^{T}x\right),\;\varphi\in\mathscr{C}:=C\left(\mathbb{R},\mathbb{C}\right)\right\} .\label{eq:C2}
\end{equation}
The closure in \eqref{eq:C2} is w.r.t the $\left\Vert \cdot\right\Vert _{\infty}$-norm
of $C\left(J^{m}\right)$, or the $L^{2}\left(J^{m}\right)$-norm
on $L^{2}\left(J^{m}\right)$; both cases are of interest. Note, if
$w\sim w'$, then $\mathscr{H}\left(w\right)=\mathscr{H}\left(w'\right)$.
\end{notation*}
We discuss measures $\mu$ of finite total variation on $J^{m}$ such
that 
\begin{align}
\int_{J^{m}}\varphi\left(w^{T}x\right)d\mu\left(x\right) & =0,\quad\forall\varphi\in\mathscr{C}.\label{eq:C3}
\end{align}
And in the $L^{2}\left(J^{m}\right)$ case, we consider $F\in L^{2}\left(J^{m}\right)$
such that 
\begin{equation}
\int_{J^{m}}\varphi\left(w^{T}x\right)F\left(x\right)d^{m}x=0,\quad\forall\varphi\in\mathscr{C}.\label{eq:C4}
\end{equation}

\begin{lem}
Equation \eqref{eq:C4} is included in the condition on $\mu$ from
\eqref{eq:C3}. 
\end{lem}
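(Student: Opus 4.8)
The plan is to realize \eqref{eq:C4} as the special case of \eqref{eq:C3} in which the measure $\mu$ is absolutely continuous with respect to Lebesgue measure on $J^{m}$, with Radon--Nikodym density $F$. Concretely, for a given $F\in L^{2}\left(J^{m}\right)$ I would introduce the complex measure $\mu_{F}$ on $\left(J^{m},\mathscr{B}\right)$ defined by $d\mu_{F}=F\,d^{m}x$, i.e., $\mu_{F}\left(E\right)=\int_{E}F\,d^{m}x$ for Borel sets $E\subseteq J^{m}$. With this substitution the left-hand side of \eqref{eq:C3} becomes
\[
\int_{J^{m}}\varphi\left(w^{T}x\right)\,d\mu_{F}\left(x\right)=\int_{J^{m}}\varphi\left(w^{T}x\right)F\left(x\right)\,d^{m}x,
\]
which is exactly the left-hand side of \eqref{eq:C4}. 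Hence, for each fixed $\varphi\in\mathscr{C}$, the vanishing condition \eqref{eq:C3} for $\mu_{F}$ is \emph{identical} to the vanishing condition \eqref{eq:C4} for $F$, and the two families of constraints coincide once I check that $\mu_{F}$ is an admissible test measure for \eqref{eq:C3}, i.e., that it has finite total variation.

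The only point requiring verification is therefore this finiteness, and it is immediate. The total variation of $\mu_{F}$ equals $\int_{J^{m}}\left|F\right|\,d^{m}x=\left\Vert F\right\Vert _{L^{1}\left(J^{m}\right)}$, and since $J^{m}=\left[-1,1\right]^{m}$ is bounded, hence of finite Lebesgue measure $2^{m}$, the Cauchy--Schwarz inequality (Hölder with conjugate exponents $2,2$) yields
\[
\left\Vert F\right\Vert _{L^{1}\left(J^{m}\right)}=\int_{J^{m}}\left|F\right|\cdot1\,d^{m}x\leq\left\Vert F\right\Vert _{L^{2}\left(J^{m}\right)}\cdot2^{m/2}<\infty.
\]
Thus $L^{2}\left(J^{m}\right)\subseteq L^{1}\left(J^{m}\right)$ on the bounded cube, the measure $\mu_{F}$ has finite total variation, and the assignment $F\mapsto\mu_{F}$ embeds the solution set of \eqref{eq:C4} into the solution set of \eqref{eq:C3}.

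I do not expect any genuine obstacle here: the single structural input is the finiteness of the Lebesgue measure of $J^{m}$, which forces the inclusion $L^{2}\subseteq L^{1}$ and hence the finite-total-variation property, while the rest is a direct matching of the two integrals. The statement should be read exactly as this inclusion of constraint classes, so that any subsequent analysis carried out for general finite-total-variation measures $\mu$ satisfying \eqref{eq:C3} applies \emph{a fortiori} to the $L^{2}$ densities $F$ satisfying \eqref{eq:C4}.
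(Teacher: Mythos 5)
Your proposal is correct and follows essentially the same route as the paper: the paper's proof simply takes $d\mu=F\left(x\right)d^{m}x$ and observes $d\mu\ll d^{m}x$, which is exactly your construction of $\mu_{F}$. Your explicit Cauchy--Schwarz verification that $\left\Vert F\right\Vert _{L^{1}}\leq2^{m/2}\left\Vert F\right\Vert _{L^{2}}<\infty$ (hence finite total variation) is the same point the paper records in the remark immediately following its proof, where it notes $L^{2}\left(J^{m}\right)\subset L^{1}\left(J^{m}\right)$ and $d\left|\mu\right|=\left|F\right|d^{m}x$.
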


\begin{proof}
Take $\mu$ of the form 
\begin{equation}
d\mu=F\left(x\right)d^{m}x.\label{eq:C5}
\end{equation}
Viewed as signed measure, we have $d\mu\ll d^{m}x$, where $d^{m}x$
is standard Lebesgue measure on $J^{m}$. 
\end{proof}
\begin{rem}
For all $F\in L^{2}\left(J^{m}\right)\subset L^{1}\left(J^{m}\right)$,
the total variation measure $\left|\mu\right|$ from \eqref{eq:C5}
is $d\left|\mu\right|\left(x\right)=\left|F\left(x\right)\right|d^{m}x$.
Recall that, for all Borel measurable sets $B$ in $J^{m}$, we have
\begin{equation}
\left|\mu\right|\left(B\right):=\sup\left\{ \sum\nolimits _{i}\left|\mu\left(A_{i}\right)\right|\right\} \label{eq:C6}
\end{equation}
where $\left\{ A_{i}\right\} $ runs over all partitions of $B$,
i.e., $B=\cup_{i}A_{i}$, $A_{i}\cap A_{j}=\emptyset$ if $i\neq j$. 

More generally, we shall make use of the following: Let $\Omega$
be a compact space, then $C\left(\Omega\right)$ with norm $\left\Vert \cdot\right\Vert _{\infty}$,
as a Banach space, has for its dual $\mathscr{M}:=\left\{ \text{all signed measures on \ensuremath{\Omega} of finite variation}\right\} $
with $\left\Vert \mu\right\Vert _{*}=\left|\mu\right|\left(\Omega\right)$
(see \eqref{eq:C6}) as the dual norm via $\mu\left(F\right)=\int_{\Omega}F\,d\mu$,
$F\in C\left(\Omega\right)$. 
\end{rem}

\paragraph*{}
\begin{defn}
Set 
\[
\mathscr{A}\left(W\right)=\left\{ \varphi\left(w^{T}x\right)\mathrel{;}w\in W,\;\varphi\in C\left(\mathbb{R},\mathbb{C}\right)\right\} .
\]
\end{defn}

\begin{namedthm}[Observations]
If $W=\left(\pi\mathbb{Z}\right)^{m}=$ Fourier lattice, then $\mathscr{A}\left(W\right)$
is dense in $C\left(J^{m}\right)$, and therefore also dense in $L^{2}\left(J^{m}\right)$.
In this case, we will then only need $\varphi\left(t\right):=e^{it}$,
$t\in\mathbb{R}$; or, in the real case, $\varphi_{c}\left(t\right)=\cos t$,
$\varphi_{s}\left(t\right)=\sin t$. 
\end{namedthm}
\begin{proof}
Follows from Stone-Weierstrass; or Fej\'er-Ces\`aro. If $\varphi\left(t\right)=e^{it}$,
then $\varphi\left(w\cdot x\right)=e^{iw\cdot x}$, $w\in\left(\pi\mathbb{Z}\right)^{m}$,
is the usual Fourier basis. Ces\`aro summation yields $\left\Vert \cdot\right\Vert _{\infty}$
approximation in $C\left(J^{m}\right)$. 
\end{proof}

\section{\label{sec:mf}Multivariable Fourier expansions}

The setting of our approximation problem discussed below is related
to that of Cybenko's \prettyref{thm:B1}, but different. Nonetheless,
the framework of Cybenko's paper serves as motivation for our present
considerations. Below we briefly outline differences, beginning with
the starting point.  

Recall, in Cybenko's setting, there is only one given, and fixed,
continuous function $\varphi$ on the real line $\mathbb{R}$, but
subject to the conditions listed in \eqref{enu:B1-2}. So, given $\varphi$
as in \eqref{enu:B1-2}, we allow variation of all $m$-vectors, and
all translation by real numbers. The conclusion of \prettyref{thm:B1}
yields approximation of all continuous functions on $J^{m}$. 

In the setting below, the starting point is different: We fix a set
$W$ of $m$-vectors, and, as in Cybenko, we ask for best approximation
of functions on $J^{m}$, in $C\left(J^{m}\right)$ or $L^{2}$. But,
in the present setting, we shall allow variation over all bounded
continuous functions $\varphi$ on $\mathbb{R}$. Also the given set
$W$ will now in fact be considered a subset of projective space $P\left(\mathbb{R}^{m}\right)$.
One of our results states that $W$-approximation of classes of functions
on $J^{m}$ will not be possible when $W$ is finite. Hence we shall
also consider countably infinite subsets $W$ of $P\left(\mathbb{R}^{m}\right)$,
and we shall address orthogonal decompositions as well, and an associated
harmonic analysis. Our approach will be constructive.

\paragraph*{Fourier coefficients of functions $F$ in $\mathscr{H}\left(w\right)$,
where $w\in\mathbb{Z}^{m}\backslash\left(0\right)$ is fixed:}

Let $\mathscr{H}\left(w\right)$ be as in \eqref{eq:C2}. Set 
\[
F\left(x\right):=\varphi\left(w^{T}x\right)=\sum_{k\in\mathbb{Z}}a_{k}e^{i\pi kw\cdot x},
\]
where $\varphi\left(s\right)=\sum_{k\in\mathbb{Z}}a_{k}e^{i\pi ks}$,
$s\in\mathbb{R}$, is the 1D Fourier expansion of $\varphi$; so $F$
is supported, in the Fourier domain, by the set $\mathbb{Z}w=\left\{ kw\mathrel{;}k\in\mathbb{Z}\right\} $.
We shall use the usual notation: 
\begin{align*}
kw\cdot x & =kw_{1}x_{1}+kw_{2}x_{2}+\cdots+kw_{m}x_{m},\quad k\in\mathbb{Z},\\
w & =\left(w_{1},\cdots,w_{m}\right)\in\mathbb{Z}^{m}\backslash\left(0\right),\;\text{and}\\
x & =\left(x_{1},\cdots,x_{m}\right)\in\mathbb{R}^{m}.
\end{align*}

Fix $w$. Let $P_{W}$ be the projection from $L^{2}\left(J^{m}\right)$
onto 
\begin{equation}
\mathscr{H}_{W}:=\overline{span}^{L^{2}\left(J^{m}\right)}\left\{ \varphi\left(w^{T}x\right)\mathrel{;}\varphi\in C\left(\mathbb{R},\mathbb{R}\right),\;w\in W\right\} .\label{eq:D1}
\end{equation}
Thus, $P_{W}\left(f\right)$ is the unique $L^{2}\left(J^{m}\right)$-minimizer:
\begin{equation}
\left\Vert f-P_{W}f\right\Vert _{L^{2}}=\inf\left\{ \left\Vert f-g\right\Vert _{L^{2}}\mathrel{;}g\in\mathscr{H}_{W}\right\} .\label{eq:D2}
\end{equation}
However, it is not always easy to find a formula for $P_{W}f$. 
\begin{center}
\includegraphics[width=0.4\textwidth]{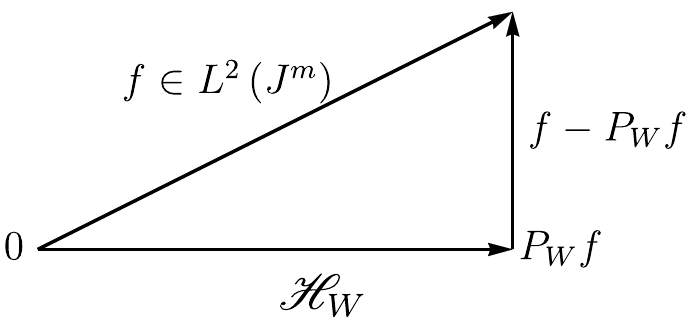}
\par\end{center}
\begin{notation*}
In the case of $L^{2}\left(J^{m}\right)$, we can simply form the
closure of the subspace $\left\{ \varphi\left(w^{T}x\right)\right\} _{w\in W,\,\varphi\in C}$,
i.e., closure in the $L^{2}\left(J^{m}\right)$-norm. We write 
\begin{equation}
W_{2}^{\perp}=\mathscr{H}_{W}^{\perp}:=L^{2}\left(J^{m}\right)\ominus\mathscr{H}_{W}=P_{W}^{\perp}L^{2}\left(J^{m}\right).\label{eq:D2b}
\end{equation}
And so the functions $F\in W_{2}^{\perp}$ are simply the functions
$F$ s.t. $P_{W}\left(F\right)=0$, where $\mathscr{H}_{W}$ is as
in \eqref{eq:D1}, and $P_{W}=P_{\mathscr{H}_{W}}$ is the orthogonal
projection in $L^{2}\left(J^{m}\right)$ onto $\mathscr{H}_{W}$;
see \eqref{eq:D2}. 
\end{notation*}
Fix a subset $W\subset\mathbb{R}^{m}\backslash\left(0\right)$. For
a function $F$ on $J^{m}$ or on $\mathbb{R}^{m}$, consider the
two actions; translation and scaling:
\begin{alignat*}{2}
\left(T_{y}F\right)\left(x\right) & =F\left(x+y\right),\quad &  & y\in\mathbb{R}^{m},\\
\left(S_{a}F\right)\left(x\right) & =F\left(ax\right), &  & a\in\mathbb{R}.
\end{alignat*}
(If $F$ is a function on $J^{m}$, translation is modulo by $2\mathbb{Z}$.) 

The invariance properties are for both the approximation problems
in $C\left(J^{m}\right)$ and in $L^{2}\left(J^{m}\right)$. Below
we recall properties of the operators of translation $T_{y}$ and
of scaling $S_{a}$.
\begin{lem}
Both the subspace $\mathscr{H}_{W}$, and the orthogonal complement
$W_{2}^{\perp}$, are invariant under the two actions $\left\{ T_{y}\right\} $
and $\left\{ S_{a}\right\} $. 
\end{lem}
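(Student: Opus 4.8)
The plan is to reduce the whole statement to the behaviour of the two actions on the generating \emph{ridge functions} $\varphi\left(w^{T}x\right)$ spanning $\mathscr{H}_{W}$ in \eqref{eq:D1}, and then to lift the conclusion from the generators to the span, to its closure, and finally to the orthogonal complement, using only boundedness of the operators together with the standard adjoint identities. The crucial first step is the observation that each of $T_{y}$ and $S_{a}$ sends a generator to \emph{another} generator with the \emph{same} direction $w$, merely reparametrizing the one-variable function $\varphi$: for $w\in W$ and $\varphi\in C\left(\mathbb{R},\mathbb{R}\right)$,
\begin{align*}
\left(T_{y}\,\varphi\left(w^{T}\cdot\right)\right)\left(x\right) & =\varphi\left(w^{T}x+w^{T}y\right)=\varphi_{y}\left(w^{T}x\right),\quad\varphi_{y}\left(s\right):=\varphi\left(s+w^{T}y\right),\\
\left(S_{a}\,\varphi\left(w^{T}\cdot\right)\right)\left(x\right) & =\varphi\left(a\,w^{T}x\right)=\varphi^{\left(a\right)}\left(w^{T}x\right),\quad\varphi^{\left(a\right)}\left(s\right):=\varphi\left(as\right).
\end{align*}
Since $\varphi_{y},\varphi^{\left(a\right)}$ are again continuous on $\mathbb{R}$, both actions map the generating set $\left\{ \varphi\left(w^{T}\cdot\right)\mathrel{;}\varphi\in C\left(\mathbb{R},\mathbb{R}\right),\,w\in W\right\}$ into itself, and hence by linearity map its linear span into itself.

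Next I would upgrade span-invariance to invariance of the closed subspace $\mathscr{H}_{W}$. Here I use that $T_{y}$ is a (periodic) translation, hence a unitary operator on $L^{2}\left(J^{m}\right)$, and that $S_{a}$ for $a\neq0$ is a bounded, boundedly invertible operator with $S_{a}^{-1}=S_{1/a}$ and adjoint $S_{a}^{*}=\left|a\right|^{-m}S_{1/a}$ (change of variables $u=ax$, Jacobian $\left|a\right|^{-m}$). A bounded operator sends the closure of a set into the closure of its image, so span-invariance yields $T_{y}\mathscr{H}_{W}\subseteq\mathscr{H}_{W}$ and $S_{a}\mathscr{H}_{W}\subseteq\mathscr{H}_{W}$; applying the same to the inverses $T_{-y}=T_{y}^{-1}$ and $S_{1/a}=S_{a}^{-1}$, which belong to the same families, turns these inclusions into the equalities $T_{y}\mathscr{H}_{W}=\mathscr{H}_{W}$ and $S_{a}\mathscr{H}_{W}=\mathscr{H}_{W}$.

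For the orthogonal complement $W_{2}^{\perp}=\mathscr{H}_{W}^{\perp}$ in \eqref{eq:D2b} I would invoke the elementary fact that if $A$ is bounded on a Hilbert space and a closed subspace $M$ satisfies $A^{*}M\subseteq M$, then $AM^{\perp}\subseteq M^{\perp}$: for $\xi\in M^{\perp}$ and $\eta\in M$ one has $\left\langle A\xi,\eta\right\rangle =\left\langle \xi,A^{*}\eta\right\rangle =0$. Taking $M=\mathscr{H}_{W}$ and $A\in\left\{ T_{y},S_{a}\right\}$, the adjoints $T_{y}^{*}=T_{-y}$ and $S_{a}^{*}=\left|a\right|^{-m}S_{1/a}$ are again members of the two families, so by the first step they preserve $\mathscr{H}_{W}$; hence $W_{2}^{\perp}$ is invariant under both $T_{y}$ and $S_{a}$, completing the proof.

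I expect the main obstacle to be not the algebra but the analytic bookkeeping for $S_{a}$ on the \emph{compact} domain $J^{m}$: one must interpret $x\mapsto ax$ modulo $2\mathbb{Z}$ so that $S_{a}$ genuinely acts as a bounded operator on $L^{2}\left(J^{m}\right)$ rather than only on $L^{2}\left(\mathbb{R}^{m}\right)$, and one must justify the Jacobian factor and the identity $S_{a}^{*}=\left|a\right|^{-m}S_{1/a}$ in this periodic setting (the translation case being immediate, as $T_{y}$ is unitary with $T_{y}^{*}=T_{-y}$). Once $S_{a}$ is confirmed to be bounded on $L^{2}\left(J^{m}\right)$ with the stated inverse and adjoint, the three steps above close the argument.
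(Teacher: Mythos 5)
The paper states this lemma without proof (it is introduced as a recalled property), and your three-step skeleton---generators map to generators in the same direction $w$ with a reparametrized $\varphi$, pass to the closed span by boundedness, then transfer to the complement via adjoints---is the natural formalization and works verbatim when $T_{y}$ and $S_{a}$ act on functions on $\mathbb{R}^{m}$. The genuine gap is precisely the step you defer as ``analytic bookkeeping'' for $S_{a}$ on $L^{2}\left(J^{m}\right)$: it cannot be closed, because on the box the identity $S_{a}^{*}=\left|a\right|^{-m}S_{1/a}$ is false, and in fact the $S_{a}$-half of the complement statement fails as literally stated. For $0<\left|a\right|<1$ the change of variables gives $\left(S_{a}^{*}G\right)\left(u\right)=\left|a\right|^{-m}\chi_{aJ^{m}}\left(u\right)G\left(u/a\right)$, a scaling composed with a sharp truncation to the sub-cube $aJ^{m}$; under the wrap convention the adjoint of $F\mapsto F\left(ax\bmod2\mathbb{Z}^{m}\right)$ is a transfer operator summing over branches. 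In neither case is the adjoint a member of the family $\left\{ S_{b}\right\} $, so your lemma ``$A^{*}M\subseteq M\Rightarrow AM^{\perp}\subseteq M^{\perp}$'' has nothing to bite on. And no repair is possible: take $m=2$, $W=\left\{ e_{1},e_{2}\right\} $, $F\left(x,y\right)=\cos\left(\pi x\right)\cos\left(\pi y\right)$. Since $\int_{-1}^{1}\cos\left(\pi t\right)dt=0$, $F\in W_{2}^{\perp}$. But $S_{1/2}F\left(x,y\right)=\cos\left(\pi x/2\right)\cos\left(\pi y/2\right)$ (no wrapping occurs since $\frac{1}{2}J^{2}\subset J^{2}$, so this is unambiguous under either convention), and testing against $\varphi\left(x\right)=\cos\left(\pi x/2\right)\in\mathscr{C}$ gives
\[
\int_{J^{2}}\left(S_{1/2}F\right)\left(x,y\right)\varphi\left(x\right)dxdy=\left(\int_{-1}^{1}\cos^{2}\left(\pi x/2\right)dx\right)\left(\int_{-1}^{1}\cos\left(\pi y/2\right)dy\right)=1\cdot\frac{4}{\pi}\neq0,
\]
so $S_{1/2}F\notin W_{2}^{\perp}$. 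The underlying reason is visible in the Fourier criterion $\widehat{F}\left(tw\right)=0$ of \prettyref{lem:D3}: $\widehat{S_{a}F}$ is a dilate of $\widehat{F}$ only when no truncation intervenes, i.e., for $\left|a\right|\geq1$ with zero-extension, or in lattice-compatible wrapped cases.

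A parallel caveat hits your $T_{y}$-computation. Under the paper's ``translation modulo $2\mathbb{Z}$'' one has $\varphi\left(w^{T}\left(\left(x+y\right)\bmod2\right)\right)=\varphi\left(w^{T}x+w^{T}y-2w^{T}k\left(x\right)\right)$ with an $x$-dependent lattice shift $k\left(x\right)$, and this is a ridge function $\varphi_{y}\left(w^{T}x\right)$ only when $w\in\mathbb{Z}^{m}$ and $\varphi$ is $2$-periodic (which Section \ref{sec:fr} grants); for an irrational direction such as $w=\left(1,\sqrt{2}\right)$ the wrapped translate of a ridge function takes the two values $\varphi\left(s+c\right)$ and $\varphi\left(s+c-2\right)$ on positive-measure portions of the same slices $\left\{ w^{T}x=s\right\} $, hence leaves $\mathscr{H}\left(w\right)$ whenever $\varphi$ is not $2$-periodic. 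So the correct move is not to justify your adjoint identity but to restrict the lemma's scope to the regime the paper actually works in---$W\subset\mathbb{Z}^{m}$, $2$-periodic $\varphi$, translations mod $2\mathbb{Z}$, and integer dilations with wrap---where both actions act on the characters $e^{i\pi n\cdot x}$, $n\in\mathbb{Z}^{m}$, by modulation, respectively by mapping each ray $\mathbb{Z}w$ into itself, and both invariances (of $\mathscr{H}_{W}$ and of its complement, via the character decomposition) become transparent; your steps then close exactly as you wrote them, with $T_{y}$ unitary and $T_{y}^{*}=T_{-y}$.
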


\begin{defn}
\label{def:D2}Let $\mathscr{M}$ denote the Borel measures $\mu$
on $J^{m}$ of finite total variation.
\end{defn}

In the case of $C\left(J^{m}\right)$, we study $\mu\in\mathscr{M}$
s.t. 
\begin{equation}
\int_{J^{m}}\varphi\left(w^{T}x\right)d\mu\left(x\right)=0,\quad\forall\varphi\in\mathscr{C},\;\forall w\in W,\label{eq:D3}
\end{equation}
where $\mathscr{C}=C_{b}\left(\mathbb{R},\mathbb{R}\right)$, or $C_{b}\left(\mathbb{R},\mathbb{C}\right)$. 

In the case of $L^{2}\left(J^{m}\right)$, we consider $F$ satisfying
\begin{equation}
\int_{J^{m}}F\left(x\right)\varphi\left(w^{T}x\right)d^{m}x=0,\quad\forall\varphi\in\mathscr{C},\;\forall w\in W,\label{eq:D4}
\end{equation}
i.e., $F\perp\left\{ \varphi\left(w^{T}x\right)\right\} _{w\in W}$
in $\ensuremath{L^{2}\left(J^{m}\right)}$. 
\begin{lem}
\label{lem:D3}$\mu\in\mathscr{M}$ satisfies \eqref{eq:D3} iff
\begin{equation}
\widehat{\mu}\left(tw\right)=0,\quad\forall t\in\mathbb{R},\;\forall w\in W.
\end{equation}
The solution \emph{(}in $L^{2}\left(J^{m}\right)$\emph{)} to \eqref{eq:D4}
is 
\begin{equation}
\widehat{F}\left(tw\right)=0,\quad\forall t\in\mathbb{R},\;\forall w\in W.
\end{equation}
Here, $\widehat{F}$ is the following Fourier transform: 
\begin{equation}
\widehat{F}\left(\xi\right):=\int_{J^{m}}e^{i\xi\cdot x}F\left(x\right)d^{m}x,\quad\forall\xi\in\mathbb{R}^{m}.\label{eq:D7}
\end{equation}
\end{lem}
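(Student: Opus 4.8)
The plan is to reduce the multivariate condition \eqref{eq:D3} to a one-dimensional statement about the pushforward of $\mu$ along the linear functional $x\mapsto w^{T}x$, and then to invoke uniqueness of the Fourier transform for finite signed measures on $\mathbb{R}$. Since both \eqref{eq:D3} and the asserted conclusion quantify over $w\in W$ one coordinate direction at a time, I would fix a single $w\in W$ and prove the equivalence for that $w$; the full statement then follows by intersecting over $w\in W$.

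First I would introduce the pushforward (distribution) measure $\mu_{w}:=\mu\circ X_{w}^{-1}$, where $X_{w}\left(x\right)=w^{T}x$, in the sense of \eqref{eq:A8} and \prettyref{prop:A3}. Because $\left|w^{T}x\right|\leq\sum_{j}\left|w_{j}\right|$ on $J^{m}$, the measure $\mu_{w}$ is a finite signed (or complex) Borel measure supported in a compact interval of $\mathbb{R}$. The change-of-variables formula for pushforwards gives, for every bounded Borel $\varphi$,
\[
\int_{J^{m}}\varphi\left(w^{T}x\right)d\mu\left(x\right)=\int_{\mathbb{R}}\varphi\left(s\right)d\mu_{w}\left(s\right),
\]
so \eqref{eq:D3} says precisely that $\mu_{w}$ annihilates $\mathscr{C}$. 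Specializing to $\varphi\left(s\right)=e^{its}$ (or, in the real case, to $\cos\left(ts\right)$ and $\sin\left(ts\right)$) and using the sign convention of \eqref{eq:D7}, I would record the key identity
\[
\widehat{\mu}\left(tw\right)=\int_{J^{m}}e^{it\left(w^{T}x\right)}d\mu\left(x\right)=\int_{\mathbb{R}}e^{its}d\mu_{w}\left(s\right)=\widehat{\mu_{w}}\left(t\right),\qquad t\in\mathbb{R}.
\]

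The forward implication is then immediate: since the exponentials belong to $\mathscr{C}$, condition \eqref{eq:D3} forces $\widehat{\mu}\left(tw\right)=\widehat{\mu_{w}}\left(t\right)=0$ for all $t$. For the converse, the hypothesis $\widehat{\mu}\left(tw\right)=0$ for all $t$ says $\widehat{\mu_{w}}\equiv0$; by uniqueness of the Fourier transform for finite signed measures on the line, this yields $\mu_{w}=0$, whence $\int_{\mathbb{R}}\varphi\,d\mu_{w}=0$ for every $\varphi\in\mathscr{C}$, i.e.\ \eqref{eq:D3}. This converse, resting on the Fourier uniqueness theorem, is the only non-formal step, and hence the main obstacle; the point to be careful about is that $\mu_{w}$ is genuinely a finite measure, which is where compactness of $J^{m}$ and finiteness of the total variation of $\mu$ enter, so that the uniqueness theorem applies verbatim.

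Finally, for the $L^{2}$ claim I would specialize to $d\mu=F\left(x\right)d^{m}x$, just as \eqref{eq:C4}/\eqref{eq:D4} is subsumed under the measure condition. Since $J^{m}$ has finite Lebesgue measure, $F\in L^{2}\left(J^{m}\right)\subset L^{1}\left(J^{m}\right)$, so this $\mu$ lies in $\mathscr{M}$ and its Fourier transform coincides with $\widehat{F}$ from \eqref{eq:D7}. Hence \eqref{eq:D4} holds iff $\widehat{F}\left(tw\right)=0$ for all $t\in\mathbb{R}$ and all $w\in W$, by the measure case already established.
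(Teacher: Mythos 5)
Your proof is correct, and it is essentially the argument the paper intends: the paper states Lemma \ref{lem:D3} without an explicit proof, and your route --- pushing $\mu$ forward to the distribution $\mu_{w}=\mu\circ X_{w}^{-1}$ (exactly the measures introduced in \eqref{eq:A8}), recording $\widehat{\mu}\left(tw\right)=\widehat{\mu_{w}}\left(t\right)$, and invoking one-dimensional Fourier uniqueness for compactly supported finite signed measures --- is the mechanism the paper relies on implicitly, matching the disintegration along $\mathbb{R}w\times\Pi_{w}$ used later in \prettyref{lem:E1}. Your attention to the two non-formal points (that $\mu_{w}$ is genuinely a finite measure with compact support, so uniqueness applies, and that $F\in L^{2}\left(J^{m}\right)\subset L^{1}\left(J^{m}\right)$ puts $d\mu=F\,d^{m}x$ in $\mathscr{M}$) is exactly right and consistent with the paper's own remarks following \eqref{eq:C5}.
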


\begin{defn}
For $\mu,\nu\in\mathscr{M}$ (see \prettyref{def:D2}), we denote
by $\mu\ast\nu$ the convolution given by 
\[
\int\varphi\,d\left(\mu\ast\nu\right):=\int_{J^{m}}\int_{J^{m}}\varphi\left(x+y\right)d\mu\left(x\right)d\nu\left(y\right),\quad\varphi\in\mathscr{C}.
\]
Note that $\left(\mu\ast\nu\right)^{\wedge}=\widehat{\mu}\,\widehat{\nu}$,
pointwise product. The algebra $\left\{ \widehat{\mu}\mathrel{;}\mu\in\mathscr{M}\right\} $
is called the Fourier algebra.
\end{defn}

\begin{lem}
\label{lem:Wp}Fix a subset $W\subset\mathbb{R}^{m}\backslash\left(0\right)$.
Then 
\begin{equation}
W_{\mathscr{M}}^{\perp}=\left\{ \mu\in\mathscr{M}\mathrel{;}\widehat{\mu}\left(tw\right)=0,\;\forall t\in\mathbb{R},\:\forall w\in W\right\} \label{eq:D8}
\end{equation}
is an ideal in the convolution algebra. Equivalently, the Fourier
transforms $\left\{ \widehat{\mu}\mathrel{;}\mu\in W_{\mathscr{M}}^{\perp}\right\} $
is an \uline{ideal} in the Fourier algebra. 
\end{lem}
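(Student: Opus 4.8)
The plan is to verify directly the two defining properties of an ideal in the commutative convolution algebra $\mathscr{M}$: that $W_{\mathscr{M}}^{\perp}$ is a linear subspace, and that it is absorbing under convolution by arbitrary elements of $\mathscr{M}$. The subspace property is immediate, since the Fourier transform $\mu\mapsto\widehat{\mu}$ is linear, so the pointwise vanishing condition $\widehat{\mu}(tw)=0$ defining the set in \eqref{eq:D8} is preserved under finite linear combinations; thus $W_{\mathscr{M}}^{\perp}$ is closed under addition and scalar multiplication.

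The absorption property is the substance of the statement, and it rests entirely on the convolution theorem $(\mu\ast\nu)^{\wedge}=\widehat{\mu}\,\widehat{\nu}$ (pointwise product) recorded in the Definition just above. Given $\mu\in W_{\mathscr{M}}^{\perp}$ and an arbitrary $\nu\in\mathscr{M}$, I would fix $t\in\mathbb{R}$ and $w\in W$ and evaluate the transform of the convolution at the point $tw$:
\[
\left(\mu\ast\nu\right)^{\wedge}\left(tw\right)=\widehat{\mu}\left(tw\right)\,\widehat{\nu}\left(tw\right)=0\cdot\widehat{\nu}\left(tw\right)=0.
\]
Since $t\in\mathbb{R}$ and $w\in W$ were arbitrary, this shows $\mu\ast\nu\in W_{\mathscr{M}}^{\perp}$. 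Because convolution of measures is commutative, left and right absorption coincide, so this single computation establishes that $W_{\mathscr{M}}^{\perp}$ is a (two-sided) ideal. Two routine preliminaries should be secured first: that $\mathscr{M}$ is genuinely an algebra under $\ast$, i.e. $\mu\ast\nu$ is again of finite total variation, which follows from the estimate $\left\Vert \mu\ast\nu\right\Vert _{*}\leq\left\Vert \mu\right\Vert _{*}\left\Vert \nu\right\Vert _{*}$; and that the convolution theorem holds for the specific convolution on $J^{m}$ defined here (translation modulo $2\mathbb{Z}$), which is exactly what is asserted in that Definition and which I may therefore invoke.

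The equivalent formulation in terms of the Fourier algebra is then a restatement rather than a separate argument. The map $\mu\mapsto\widehat{\mu}$ is an algebra homomorphism from $(\mathscr{M},\ast)$ onto the Fourier algebra (equipped with pointwise product), and it carries $W_{\mathscr{M}}^{\perp}$ onto $\{\widehat{\mu}\mathrel{;}\mu\in W_{\mathscr{M}}^{\perp}\}$; the image of an ideal under a homomorphism onto its range is an ideal of the range, and here the range condition is precisely that of vanishing on the fixed set $\mathbb{R}W=\{tw\mathrel{;}t\in\mathbb{R},\,w\in W\}$, which is patently absorbing under pointwise multiplication. I do not anticipate a genuine obstacle: the proof is a one-line consequence of the convolution theorem, and the only point meriting a word of care is noting that the zero set is a \emph{fixed} subset of the Fourier domain, independent of $\mu$, which is exactly what makes the defining condition an ideal rather than merely a subspace.
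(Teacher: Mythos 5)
Your proof is correct and is exactly the argument the paper intends: the paper's own proof is just ``Immediate from the definitions,'' relying on the convolution theorem $\left(\mu\ast\nu\right)^{\wedge}=\widehat{\mu}\,\widehat{\nu}$ recorded in the preceding Definition, which you invoke in the same way. Your additional remarks (linearity, the total-variation estimate, and the observation that the zero set $\left\{ tw\mathrel{;}t\in\mathbb{R},\,w\in W\right\}$ is fixed and hence absorbing under pointwise product) simply make explicit what the paper leaves implicit.
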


\begin{proof}
Immediate from the definitions. 
\end{proof}
\begin{rem}[Analytic continuation of $\widehat{\mu}$ and $\widehat{F}$]
Note that both $\widehat{\mu}\left(\cdot\right)$ and $\widehat{F}\left(\cdot\right)$
are entire analytic, and so extend to $\mathbb{C}^{m}$; $\xi\in\mathbb{R}^{m}\longrightarrow\mathbb{C}^{m}$.

The notation of the function $\widehat{F}\left(\xi\right)$ in \eqref{eq:D7},
$\xi\in\mathbb{R}^{m}$, is reasonably well understood. The extension
from $\mathbb{R}^{m}$ to $\mathbb{C}^{m}$ is as follows: Set 
\[
\widehat{F}\left(\zeta\right)=\underset{\widehat{G}\left(\zeta\right)}{\underbrace{\int_{J^{m}}e^{i\left(x_{1}\zeta_{1}+\cdots+x_{m}\zeta_{m}\right)}F\left(x\right)d^{m}x}}
\]
where $\widehat{G}\left(\zeta\right)$ is an entire analytic function
of exponential type, i.e., 
\[
|\widehat{G}\left(\zeta\right)|\leq const\:e^{2\sum_{1}^{m}\left|\Im\zeta_{j}\right|}.
\]
This is known as a Paley-Wiener class, and is reasonably well understood;
see e.g., books by H\"{o}rmander \cite{MR1996773,MR2108588}. 
\end{rem}

\section{A Radon Transform}

Recall that the Radon transform (see e.g., \cite{MR3405370}) is an
integral transform taking a function $f$ defined on the plane to
a function $Rf$ defined on the (two-dimensional) space of lines in
the plane, whose value at a particular line is equal to the line integral
of the function over that line. Below we need a higher dimensional
variant (see \prettyref{lem:E1}) of this idea, and we shall refer
to it also as a Radon transform.
\begin{lem}[Radon transform]
\label{lem:E1}With the measure in $L^{2}\left(J^{m}\right)$, we
obtain explicit formulas: Fix $W\subset\mathbb{R}^{m}\backslash\left(0\right)$,
and for $w\in W$, set 
\begin{equation}
\Pi_{w}=\left\{ y\in\mathbb{R}^{m}\mathrel{;}w^{T}y=0\right\} ,
\end{equation}
i.e., the hyperplane\emph{ (}\prettyref{fig:E1}\emph{)}; then 
\begin{gather}
F\in L^{2}\left(J^{m}\right)\ominus\mathscr{H}_{W}\\
\Updownarrow\nonumber \\
\int_{y\in\Pi_{w}}F\left(sw+y\right)d\sigma_{w}\left(y\right)=0,\;\forall w\in W,\:\forall s\in\mathbb{R},\label{eq:E17}
\end{gather}
where $d\sigma_{w}=d\sigma_{w}^{m-1}$ is the standard Lebesgue measure
on $\Pi_{w}\simeq\mathbb{R}^{m-1}$. Note that \eqref{eq:E17} is
a Radon-transform. (See, e.g., \cite{MR3737460,MR3804007}.) 
\end{lem}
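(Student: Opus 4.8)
The plan is to reduce the stated biconditional to \prettyref{lem:D3} via the classical Fourier slice (projection--slice) theorem, which turns integration over the hyperplanes transverse to $w$ into a restriction of $\widehat{F}$ to the ray $\mathbb{R}w$. First I would record that, by \prettyref{lem:D3}, the left-hand condition $F\in L^{2}\left(J^{m}\right)\ominus\mathscr{H}_{W}$ is equivalent to $\widehat{F}\left(tw\right)=0$ for all $t\in\mathbb{R}$ and all $w\in W$. Hence it suffices to fix a single $w\in W$ and prove that the vanishing of the fiber integrals in \eqref{eq:E17} for all $s\in\mathbb{R}$ is equivalent to the vanishing of $\widehat{F}$ along the entire ray $\left\{ tw\mathrel{;}t\in\mathbb{R}\right\} $.

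To carry out the computation, extend $F$ by zero outside $J^{m}$; since $F\in L^{2}\left(J^{m}\right)$ has compact support, the extension lies in $L^{1}\left(\mathbb{R}^{m}\right)\cap L^{2}\left(\mathbb{R}^{m}\right)$, so the fiber integral
\[
\left(\mathcal{R}_{w}F\right)\left(s\right):=\int_{\Pi_{w}}F\left(sw+y\right)d\sigma_{w}\left(y\right)
\]
is defined for a.e. $s$ and is a compactly supported $L^{1}$ function of $s$. I would then invoke the orthogonal splitting $\mathbb{R}^{m}=\mathbb{R}w\oplus\Pi_{w}$ together with the linear change of variables $x=sw+y$, whose volume-scaling factor is $\left|w\right|$ (so $d^{m}x=\left|w\right|\,ds\,d\sigma_{w}\left(y\right)$ and $w^{T}x=s\left|w\right|^{2}$). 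Taking the one-dimensional Fourier transform in $s$ and applying Fubini gives the slice identity
\[
\int_{\mathbb{R}}e^{i\tau s}\left(\mathcal{R}_{w}F\right)\left(s\right)ds=\frac{1}{\left|w\right|}\int_{\mathbb{R}^{m}}e^{i\frac{\tau}{\left|w\right|^{2}}\left(w^{T}x\right)}F\left(x\right)d^{m}x=\frac{1}{\left|w\right|}\,\widehat{F}\left(\frac{\tau}{\left|w\right|^{2}}w\right),
\]
so that the $1$D transform of $\mathcal{R}_{w}F$ is, up to the scalar $\left|w\right|^{-1}$ and the reparametrization $t=\tau/\left|w\right|^{2}$, exactly the restriction of $\widehat{F}$ to the ray $\mathbb{R}w$.

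From here the equivalence is immediate: as $\tau$ runs over $\mathbb{R}$ the point $t=\tau/\left|w\right|^{2}$ runs over all of $\mathbb{R}$, so $\widehat{F}\left(tw\right)=0$ for all $t$ iff the $1$D Fourier transform of $\mathcal{R}_{w}F$ vanishes identically, and by injectivity of the $1$D Fourier transform on $L^{1}$ this holds iff $\mathcal{R}_{w}F=0$ a.e., i.e. \eqref{eq:E17}. Intersecting these equivalences over all $w\in W$ and combining with \prettyref{lem:D3} yields the claimed biconditional. I do not expect a genuine obstacle here, since \prettyref{lem:D3} does the structural work; the only points requiring care are the justification of Fubini and of the Jacobian bookkeeping in the slice identity, and the reading of ``$=0$ for all $s$'' in the a.e. sense — both harmless because the compact support of $F$ makes every integral in sight absolutely convergent, and $\mathcal{R}_{w}F$ admits a continuous representative once its Fourier transform is known.
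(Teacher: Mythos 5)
Your proof is correct, but it closes the argument by a different route than the paper. Both rest on the same orthogonal splitting $x=sw+y$, $s\in\mathbb{R}$, $y\in\Pi_{w}$, plus Fubini; the paper, however, uses it directly: normalizing $\left\Vert w\right\Vert _{2}=1$, it establishes
\[
\int_{J^{m}}\varphi\left(w^{T}x\right)F\left(x\right)d^{m}x=\int_{\mathbb{R}}\varphi\left(s\right)\left(\int_{\Pi_{w}}F\left(sw+y\right)d\sigma_{w}\left(y\right)\right)ds
\]
for \emph{every} $\varphi\in\mathscr{C}$, so that $F\perp\mathscr{H}_{W}$ holds iff the inner Radon integral annihilates all of $\mathscr{C}$, which forces it to vanish a.e.; the Fourier transform never enters. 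You instead test only against the exponentials $\varphi\left(s\right)=e^{i\tau s}$, which turns the same Fubini identity into the projection--slice theorem, and you replace the paper's duality step (bounded continuous functions separate compactly supported $L^{1}$ functions) with injectivity of the one-dimensional Fourier transform on $L^{1}$, delegating the reduction of the left-hand condition to \prettyref{lem:D3}. The trade-off: your version is more careful about the analytic bookkeeping---it handles general $\left\Vert w\right\Vert $ with the correct Jacobian $d^{m}x=\left|w\right|\,ds\,d\sigma_{w}\left(y\right)$, where the paper simply normalizes---and it exhibits the lemma as an instance of the classical slice theorem, tying it to \prettyref{lem:D3}; the paper's argument is shorter and self-contained, not leaning on \prettyref{lem:D3} (which in effect encodes the same equivalence via the same exponential trick). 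One small caution: your parenthetical claim that $\mathcal{R}_{w}F$ always admits a continuous representative is false in general---for $F=\chi_{J^{m}}$ and $w=e_{1}$ one gets $\mathcal{R}_{w}F=2^{m-1}\chi_{\left[-1,1\right]}$, discontinuous at $s=\pm1$---but since you only invoke it when $\widehat{\mathcal{R}_{w}F}\equiv0$, where the representative is $0$, nothing breaks, and reading \eqref{eq:E17} a.e. in $s$ is in any case the correct interpretation, used implicitly by the paper as well.
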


\begin{center}
\begin{figure}
\begin{centering}
\includegraphics[width=0.4\textwidth]{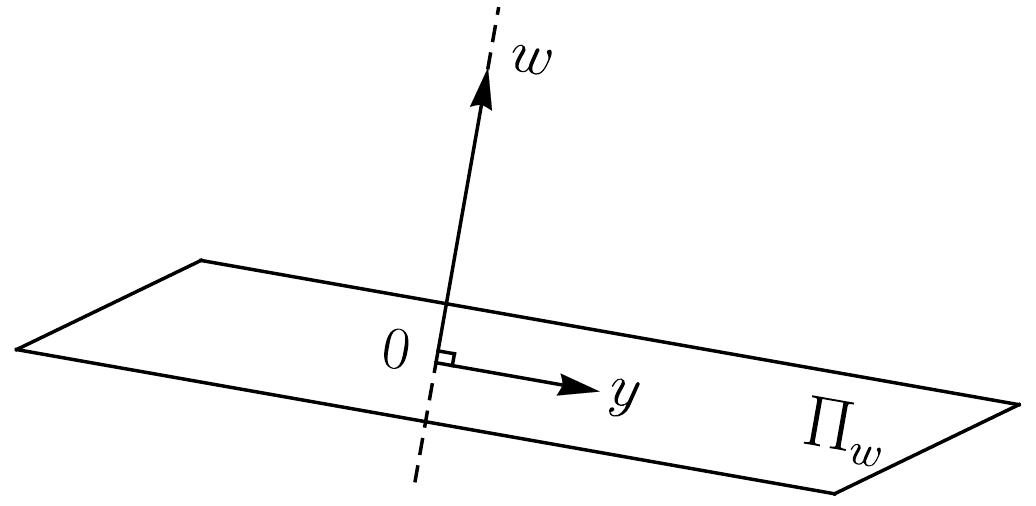}
\par\end{centering}
\caption{\label{fig:E1}The hyperplane $\Pi_{w}$}
\end{figure}
\par\end{center}
\begin{proof}
Fix $W\subset\mathbb{R}^{m}\backslash\left(0\right)$. Assume without
loss of generality that $\left\Vert w\right\Vert _{2}=1$. For $x\in\mathbb{R}^{m}$,
set $y=x-\left(w^{T}x\right)w$, then $y\in\Pi_{w}$ i.e., $w^{T}y=0$
by a direct calculation. 

Introduce a coordinate system $\mathbb{R}\times\Pi_{w}\longleftrightarrow\mathbb{R}^{m}$,
\[
\left(s,y\right)\longmapsto x=sw+y\in\mathbb{R}^{m}
\]
with $s\in\mathbb{R}$, $y\in\Pi_{w}$ ($w$ is fixed and normalized);
then 
\[
\int\varphi\left(w^{T}x\right)F\left(x\right)d^{m}x=\int\varphi\left(s\right)\left(\int_{y\in\Pi_{w}}F\left(sw+y\right)d\sigma_{w}\left(y\right)\right)ds.
\]
It follows that
\[
F\in W_{2}^{\perp}\Longleftrightarrow\int_{y\in\Pi_{w}}F\left(sw+y\right)d\sigma_{w}\left(y\right)=0,\quad\forall w\in W,\:\forall s\in\mathbb{R}.
\]
\end{proof}
\begin{cor}
For $w\in\mathbb{R}^{m}\backslash\left(0\right)$, let $\Pi_{w}$
and $d\sigma_{w}$ be as above. Define the following operator (Radon
transform) $R_{w}:L^{2}\left(J^{m}\right)\longrightarrow L_{loc}^{2}\left(\mathbb{R}\right)$,
\begin{equation}
\left(R_{w}\left(F\right)\right)\left(t\right):=\int_{y\in\Pi_{w}}F\left(tw+y\right)d\sigma_{w}\left(y\right),\quad t\in\mathbb{R},\label{eq:E4}
\end{equation}
then 
\[
F\in L^{2}\left(J^{m}\right)\ominus\left\{ \varphi\left(w^{T}x\right)\right\} _{\varphi\in\mathscr{C},\,w\in W}\Longleftrightarrow R_{w}\left(F\right)\equiv0.
\]
\end{cor}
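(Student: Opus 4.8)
The plan is to recognize this corollary as a repackaging of \prettyref{lem:E1} in the language of the operator $R_w$, the only genuine work being to confirm that $R_w$ is a well-defined map $L^2(J^m)\to L^2_{loc}(\mathbb{R})$ so that the pointwise-a.e.\ condition \eqref{eq:E17} may be phrased as the vanishing $R_w(F)\equiv0$. Throughout I read the right-hand side as ``$R_w(F)\equiv0$ for every $w\in W$,'' consistent with the quantifiers in \eqref{eq:E17}.

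First I would verify well-definedness. Fixing $w$ and normalizing $\|w\|_2=1$ as in the proof of \prettyref{lem:E1}, the change of variables $x=sw+y$, $s\in\mathbb{R}$, $y\in\Pi_w$, is an orthogonal (hence measure-preserving) coordinate system on $\mathbb{R}^m$, under which $d^m x=ds\,d\sigma_w(y)$. Because $J^m=[-1,1]^m$ is compact, each slice $\Pi_w\cap(J^m-tw)$ has $(m-1)$-dimensional measure bounded by a constant $C_m$ independent of $t$, and the slices are empty for $|t|$ large. Applying Cauchy--Schwarz to the slice integral defining $(R_wF)(t)$ and then integrating in $t$ via Fubini, I expect the estimate $\|R_wF\|_{L^2(\mathbb{R})}^2\le C_m\|F\|_{L^2(J^m)}^2$; in particular $R_wF$ is supported in a compact interval and lies in $L^2_{loc}(\mathbb{R})$, so the operator in \eqref{eq:E4} is well defined and bounded.

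Next I would identify the subspaces. By the definition \eqref{eq:D1} of $\mathscr{H}_W$ and \eqref{eq:D2b} of $W_2^\perp$, the orthogonal complement $L^2(J^m)\ominus\{\varphi(w^Tx)\}_{\varphi\in\mathscr{C},\,w\in W}$ appearing on the left is exactly $\mathscr{H}_W^\perp=W_2^\perp$. With $R_w$ now available, the slice condition \eqref{eq:E17} of \prettyref{lem:E1} reads, verbatim, $(R_wF)(s)=0$ for all $w\in W$ and all $s\in\mathbb{R}$, i.e.\ $R_w(F)\equiv0$ for every $w\in W$. Thus the stated biconditional is simply \prettyref{lem:E1} transcribed into operator form, and the proof closes by direct appeal to that lemma.

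The only step requiring any care is the first: confirming that the fiberwise integral $(R_wF)(t)$ is defined for a.e.\ $t$ and yields an $L^2_{loc}$ function, rather than the equivalence itself, which is immediate once \prettyref{lem:E1} is in hand. I would also make sure the a.e.\ statements match up, so that ``$R_w(F)\equiv0$ as an element of $L^2_{loc}(\mathbb{R})$'' is genuinely equivalent to the pointwise vanishing for all $s$ used in \eqref{eq:E17}; this is automatic once the compact-support bound above realizes $R_wF$ as a bona fide $L^2$ representative.
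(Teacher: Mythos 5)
Your proposal is correct and takes essentially the same route as the paper: the paper's one-line proof simply records the duality identity $\int_{\mathbb{R}}\varphi\left(t\right)\left(R_{w}F\right)\left(t\right)dt=\left(Jac\right)\int_{J^{m}}\varphi\left(w^{T}x\right)F\left(x\right)d^{m}x$, which is exactly the coordinate-change identity inside the proof of Lemma~\ref{lem:E1} that you invoke, so reading the corollary as \eqref{eq:E17} transcribed into the operator form $R_{w}F\equiv0$ for all $w\in W$ is the intended argument. Your additional Cauchy--Schwarz estimate showing $R_{w}$ maps $L^{2}\left(J^{m}\right)$ boundedly into compactly supported $L^{2}$ functions is correct and supplies a well-definedness point the paper leaves implicit.
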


\begin{proof}
We have 
\[
\int_{\mathbb{R}}\varphi\left(t\right)\left(R_{w}F\right)\left(t\right)dt=\left(Jac\right)\int_{J^{m}}\varphi\left(w^{T}x\right)F\left(x\right)d^{m}x,
\]
where ``$Jac$'' denotes the corresponding Jacobian.
\end{proof}
Here is another corollary of the duality approach: 
\begin{cor}
If $W\subset\mathbb{R}^{m}\backslash\left(0\right)$ is given and
finite, then 
\[
F\in L^{2}\left(J^{m}\right)\ominus\left\{ \varphi\left(w^{T}x\right)\right\} _{\varphi\in\mathscr{C},\,w\in W}\Longleftrightarrow R_{w}F=0,\;\forall w\in W
\]
is infinite dimensional. 

It follows in particular that the space of solutions $\mu$ to 
\[
\int_{J^{m}}\varphi\left(w^{T}x\right)d\mu\left(x\right)=0,\quad\forall\varphi\in\mathscr{C},\:\forall w\in W,
\]
is infinite-dimensional \emph{(}i.e., $\mu\in W_{\mathscr{M}}^{\perp}$,
see \eqref{eq:D8}\emph{)}. 
\end{cor}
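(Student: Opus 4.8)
The plan is to reduce the statement to the Fourier-transform characterization of \prettyref{lem:D3} and then to exhibit, by a single local (constant-coefficient differential) operator, an explicit infinite linearly independent family inside $L^2(J^m)\ominus\mathscr{H}_W$. Throughout I assume $m\ge 2$; for $m=1$ a nonzero $w$ has $\mathbb{R}w=\mathbb{R}$, which forces $\widehat F\equiv 0$ and hence $F=0$, so the assertion is genuinely an $m\ge 2$ phenomenon. By \prettyref{lem:D3}, $F\in L^2(J^m)\ominus\mathscr{H}_W$ exactly when $\widehat F$ vanishes on the finite union of lines $L:=\bigcup_{w\in W}\mathbb{R}w$. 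Since $W$ is finite and $m\ge 2$, $L$ is a one-dimensional subset of $\mathbb{R}^m$, leaving ``room'' for many entire functions of exponential type (the Paley--Wiener class) vanishing on it.

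First I would build a nonzero polynomial symbol that vanishes on every line of $L$. Writing $W=\{w_1,\dots,w_N\}$, for each $j$ I choose a nonzero vector $a_j\perp w_j$ (possible since $w_j\neq 0$ and $m\ge 2$, so $w_j^\perp$ has dimension $m-1\ge 1$), and set $\ell_j(\xi)=a_j\cdot\xi$ and $P(\xi)=\prod_{j=1}^N\ell_j(\xi)$. Then for each $k$ one has $P(tw_k)=t^N\prod_j(a_j\cdot w_k)$, and this contains the factor $a_k\cdot w_k=0$, so $P$ vanishes identically on each $\mathbb{R}w_k$, hence on all of $L$.

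Next I would transport this vanishing into physical space via the differential operator $P(\partial)$ with symbol $P$. Fixing any $F_0\in C_c^\infty\bigl((-1,1)^m\bigr)$ and setting $F:=P(\partial)F_0$, locality of differential operators keeps $\operatorname{supp}F\subset(-1,1)^m$, so $F\in L^2(J^m)$; and with the convention of \eqref{eq:D7} one computes $\widehat F(\xi)=(-i)^N P(\xi)\widehat{F_0}(\xi)$, which vanishes on $L$. By \prettyref{lem:D3}, $F\in L^2(J^m)\ominus\mathscr{H}_W$. It then remains to produce infinitely many linearly independent such $F$: I would take any infinite linearly independent family $\{F_0^{(n)}\}\subset C_c^\infty\bigl((-1,1)^m\bigr)$ and apply $P(\partial)$. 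Because $P\not\equiv 0$ and its real zero set has empty interior, $P(\partial)$ is injective on $C_c^\infty$ (if $P(\partial)F_0=0$ then $P\,\widehat{F_0}\equiv 0$ on $\mathbb{R}^m$, forcing the entire function $\widehat{F_0}$ to vanish, whence $F_0=0$), so $\{P(\partial)F_0^{(n)}\}$ remains linearly independent. This yields the claimed infinite-dimensionality of $L^2(J^m)\ominus\mathscr{H}_W$. The final assertion about measures follows by taking $d\mu_n=F^{(n)}(x)\,d^mx$; by \prettyref{lem:D3} each $\mu_n\in W_{\mathscr{M}}^\perp$, and linear independence of the $F^{(n)}$ in $L^2\subset L^1$ gives linear independence of the $\mu_n$.

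The only genuinely delicate point is the construction of $P$: one must vanish simultaneously on all $N$ lines while keeping $P$ nonzero, and the product-of-linear-forms trick above does exactly this, at degree $N=\lvert W\rvert$. Everything else is bookkeeping --- locality to preserve the support in $J^m$, the Paley--Wiener remark to know $\widehat F$ is entire, and injectivity of $P(\partial)$ (equivalently $P\not\equiv 0$) to upgrade an infinite family in $C_c^\infty$ to an infinite linearly independent family in the orthogonal complement.
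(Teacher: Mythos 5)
Your argument is correct, and it reaches the conclusion by a genuinely different mechanism than the paper. The corollary is stated in the paper as a consequence of the Radon-transform duality of \prettyref{lem:E1}, and the paper's actual production of elements of $W_{2}^{\perp}$ (see the proof of the non-density corollary following \eqref{eq:E4}, and the wavelet constructions \eqref{eq:G4}--\eqref{eq:G7}) proceeds by convolution: one picks $G_{w_{i}}\in\ker\left(R_{w_{i}}\right)$, one factor per direction, and forms $K=G_{w_{1}}\ast\cdots\ast G_{w_{p}}$ as in \eqref{eq:E5}, so that $\widehat{K}=\prod_{i}\widehat{G_{w_{i}}}$ vanishes on each line $\mathbb{R}w_{i}$; infinite-dimensionality is then read off from the arbitrariness of the factors (e.g.\ the Haar-wavelet families of \prettyref{exa:wavelet}). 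You implement the same Fourier-side idea --- via \prettyref{lem:D3}, multiply by something vanishing on $\bigcup_{w\in W}\mathbb{R}w$ --- but realize it with a single polynomial symbol $P\left(\xi\right)=\prod_{j}\left(a_{j}\cdot\xi\right)$, $a_{j}\perp w_{j}$, acting as the local operator $P\left(\partial\right)$ on $C_{c}^{\infty}\left(\left(-1,1\right)^{m}\right)$. This buys two real improvements over the paper's route. First, locality keeps $\operatorname{supp}P\left(\partial\right)F_{0}\subset\left(-1,1\right)^{m}$, whereas convolutions of functions on $J^{m}$ enlarge supports beyond $J^{m}$ and must be ``restricted to $J^{m}$'' (as the paper says after \eqref{eq:G5}), a step that in principle perturbs $\widehat{F}$ as defined in \eqref{eq:D7} (an integral over $J^{m}$ only) and is left implicit there; your construction avoids this issue entirely. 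Second, your injectivity argument for $P\left(\partial\right)$ (the real zero set of $P\not\equiv0$ has empty interior, and $\widehat{F_{0}}$ is continuous, indeed entire) converts any infinite linearly independent family of test functions into an infinite linearly independent family in $W_{2}^{\perp}$, giving a complete proof of infinite-dimensionality, which the paper asserts without detail. Your side computations are also accurate: under the convention \eqref{eq:D7} one has $\widehat{P\left(\partial\right)F_{0}}\left(\xi\right)=\left(-i\right)^{N}P\left(\xi\right)\widehat{F_{0}}\left(\xi\right)$ by homogeneity; the measure statement follows by $d\mu_{n}=F^{\left(n\right)}\left(x\right)d^{m}x$ via the inclusion $W_{2}^{\perp}\subset W_{\mathscr{M}}^{\perp}$ noted after \eqref{eq:F5}; and your caveat that the statement requires $m\geq2$ (for $m=1$ and $W\neq\emptyset$ the complement is $\left\{ 0\right\} $) is a correct point the paper leaves tacit. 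What the paper's approach buys instead is explicit closed-form annihilators (Haar and sinc factors) tied directly to the kernels of the Radon transforms $R_{w}$, in keeping with the theme of that section.
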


Below is a property that holds for functions $\varphi\left(w^{T}x\right)$
and not for other functions $F$ in $C\left(J^{m}\right)$ or in $L^{2}\left(J^{m}\right)$: 
\begin{lem}
Fix $w$ and $\varphi$, and set $F\left(x\right)=\varphi\left(w^{T}x\right)$,
then $F$ is constant on every hyperplane 
\[
\Pi_{w,t}:=\left\{ x\in\mathbb{R}^{m}\mathrel{;}w^{T}x=t\right\} .
\]
\end{lem}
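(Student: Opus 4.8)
The plan is to argue directly from the defining relations, since the statement is an immediate consequence of the way $F$ is built. First I would fix $t\in\mathbb{R}$ and take an arbitrary point $x\in\Pi_{w,t}$. By the very definition of the hyperplane, $w^{T}x=t$. Substituting this into the formula $F\left(x\right)=\varphi\left(w^{T}x\right)$ yields $F\left(x\right)=\varphi\left(t\right)$, a value depending only on $t$ and on the fixed pair $\left(w,\varphi\right)$, and not on the particular choice of $x$ within $\Pi_{w,t}$.

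To phrase constancy without ambiguity, I would then take two arbitrary points $x,x'\in\Pi_{w,t}$ and chain the equalities
\[
F\left(x\right)=\varphi\left(w^{T}x\right)=\varphi\left(t\right)=\varphi\left(w^{T}x'\right)=F\left(x'\right),
\]
so that $F$ assumes a single value on $\Pi_{w,t}$. Since $t\in\mathbb{R}$ was arbitrary, the conclusion holds for every hyperplane in the family $\left\{\Pi_{w,t}\right\}_{t\in\mathbb{R}}$.

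I do not expect any genuine obstacle here: the result follows the instant one observes that $F$ factors as $F=\varphi\circ X_{w}$ through the scalar-valued map $X_{w}\left(x\right)=w^{T}x$, and $\Pi_{w,t}$ is exactly the level set $X_{w}^{-1}\left(\left\{t\right\}\right)$. The only point worth emphasizing is conceptual rather than technical: this ridge structure is precisely what separates the functions $\varphi\left(w^{T}x\right)$ from generic elements of $C\left(J^{m}\right)$ or $L^{2}\left(J^{m}\right)$, and it is what underlies the Radon-transform analysis of \prettyref{lem:E1}, where the fiber integrals in \eqref{eq:E17} isolate exactly this directional behavior.
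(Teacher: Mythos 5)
Your proof is correct and coincides with the paper's own argument: the paper likewise observes that $x\in\Pi_{w,t}$ gives $w^{T}x=t$, hence $F\left(x\right)=\varphi\left(t\right)$, a value independent of $x$. (The paper's proof then appends a side remark about evaluating $\varphi\left(w^{T}x\right)$ on hyperplanes $\Pi_{w_{0},t_{0}}$ for a different direction $w_{0}\neq w$, but that is extra material beyond the stated lemma, which your substitution argument fully proves.)
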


\begin{center}
\includegraphics[width=0.4\textwidth]{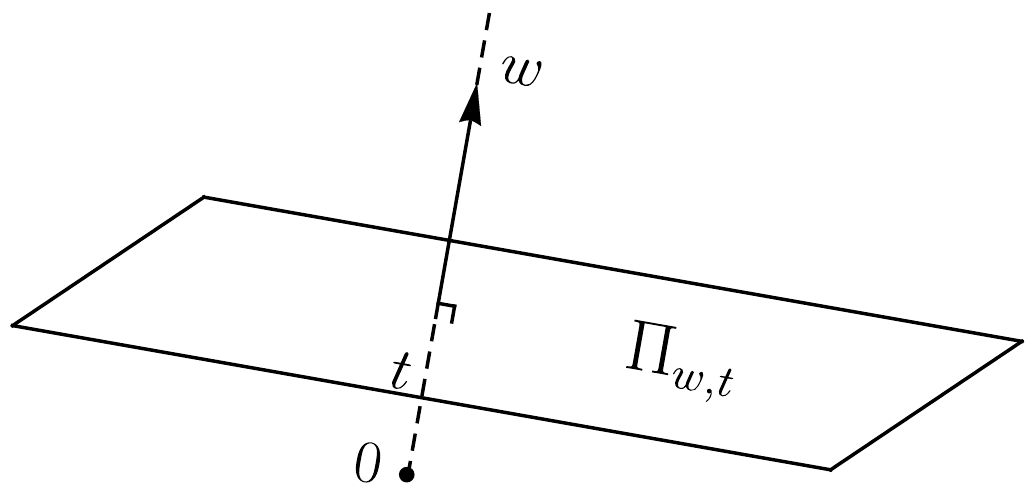}
\par\end{center}
\begin{proof}
If $F\left(x\right)=\varphi\left(w^{T}x\right)$, $x\in\Pi_{w,t}$,
then $F\left(x\right)=\varphi\left(t\right)$. But we will need to
also compute $\varphi\left(w^{T}x\right)$ when $w\neq w_{0}$ in
$\mathbb{R}^{m}$ and $x\in\Pi_{w_{0},t_{0}}$. The function is not
constant on $\Pi_{w_{0},t}$ but it depends on only one angle. 

In details, let $w\neq w_{0}$ be as above, and suppose $x\in\Pi_{w_{0},t_{0}}$.
Write 
\[
w=\alpha w_{0}+w^{T}
\]
where $w^{T}\cdot w_{0}=0$. Then $\varphi\left(w\cdot x\right)=\varphi\left(t_{0}\alpha+w^{\perp}\cdot x\right)$. 
\end{proof}
\begin{cor}
Let $W\subset\mathbb{R}^{m}\backslash\left(0\right)$ be a finite
subset, then 
\[
\left\{ \varphi\left(w^{T}x\right)\right\} _{\varphi\in\mathscr{C},\,w\in W}
\]
is \uline{not} dense in $L^{2}\left(J^{m}\right)$.
\end{cor}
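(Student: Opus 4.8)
The plan is to read this corollary as the dual reformulation of the one immediately preceding it. Recall that a subset $S$ of a Hilbert space is dense if and only if $S^{\perp}=\{0\}$; and by \eqref{eq:D1} the $L^{2}(J^{m})$-closed linear span of $\{\varphi(w^{T}x)\}_{\varphi\in\mathscr{C},\,w\in W}$ is exactly $\mathscr{H}_{W}$. Hence density of this set in $L^{2}(J^{m})$ would force $\mathscr{H}_{W}=L^{2}(J^{m})$, i.e. $W_{2}^{\perp}=L^{2}(J^{m})\ominus\mathscr{H}_{W}=\{0\}$; so it suffices to produce a single nonzero element of $W_{2}^{\perp}$. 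The fastest route is simply to invoke the preceding corollary, which for finite $W$ already asserts that $W_{2}^{\perp}$ (equivalently, the space of $F$ with $R_{w}F=0$ for all $w\in W$) is infinite-dimensional, hence in particular nonzero; non-density then follows at once.

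For a self-contained and constructive argument, in the spirit of the paper, I would instead exhibit such an $F$ explicitly through \prettyref{lem:D3}. Here one should assume $m\geq2$: for $m=1$ the map $x\mapsto wx$ is a homeomorphism of $\mathbb{R}$, so $\{\varphi(wx)\}$ is already dense and the statement is false; thus the hypothesis $m\geq2$ is genuinely needed. By \prettyref{lem:D3}, $F\in W_{2}^{\perp}$ iff $\widehat{F}(tw)=0$ for all $t\in\mathbb{R}$ and all $w\in W$, i.e. iff $\widehat{F}$ vanishes on the finite union of lines $Z=\bigcup_{w\in W}\mathbb{R}w$.

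Writing $W=\{w_{1},\dots,w_{N}\}$, for each $j$ I would pick a nonzero linear functional $\ell_{j}$ on $\mathbb{R}^{m}$ with $\ell_{j}(w_{j})=0$ (possible precisely because $m\geq2$, since the annihilator of a nonzero vector is then nonzero), and set $p:=\prod_{j=1}^{N}\ell_{j}$. Then $p(tw_{i})=t^{N}\prod_{j}\ell_{j}(w_{i})=0$ because the factor $\ell_{i}(w_{i})$ vanishes, so $p\equiv0$ on $Z$. Let $D$ be the constant-coefficient differential operator with symbol $p$, so that $\widehat{DG}=p\,\widehat{G}$. For any nonzero $G\in C_{c}^{\infty}(\mathrm{int}\,J^{m})$, the function $F:=DG$ is supported in $J^{m}$, lies in $L^{2}(J^{m})$, and has $\widehat{F}=p\,\widehat{G}$ vanishing on $Z$, so $F\in W_{2}^{\perp}$. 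Since $\widehat{G}$ is a nontrivial entire function, its zero set has empty interior, so $p\,\widehat{G}\not\equiv0$ and therefore $F\neq0$; letting $G$ range over an infinite linearly independent family then recovers the infinite-dimensionality of $W_{2}^{\perp}$.

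I do not expect a serious obstacle: the present statement is essentially a repackaging of the infinite-dimensionality established in the preceding corollary. The one point requiring attention is the dimension hypothesis $m\geq2$, on which the entire construction hinges through the existence of a nonzero functional annihilating each line $\mathbb{R}w_{j}$; the remaining verifications (injectivity of $D$ on nonzero compactly supported functions, and that the paper's transform $\widehat{F}$ agrees with the $\mathbb{R}^{m}$ Fourier transform for $F$ supported in $J^{m}$) are routine consequences of the Paley--Wiener/analyticity remarks already recorded above.
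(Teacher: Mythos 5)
Your proposal is correct, and your constructive argument takes a genuinely different route from the paper's. The paper proves this corollary by choosing, for each $w_{i}\in W=\left\{ w_{i}\right\} _{i=1}^{p}$, a nonzero $G_{w_{i}}$ in the kernel of the Radon transform $R_{w_{i}}$ of \eqref{eq:E4}, and then forming the convolution $K=G_{w_{1}}\ast\cdots\ast G_{w_{p}}$ as in \eqref{eq:E5}: on the Fourier side each factor $\widehat{G}_{w_{i}}$ vanishes on the line $\mathbb{R}w_{i}$, so the product $\widehat{K}$ vanishes on the whole union, and $K\perp\mathscr{H}_{W}$ by \prettyref{lem:E1} and \prettyref{lem:D3}. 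Your argument uses the same Fourier-side mechanism from \prettyref{lem:D3} (forcing $\widehat{F}$ to vanish on $\bigcup_{w\in W}\mathbb{R}w$), but implements it with a single bump function $G$ and a polynomial multiplier $p=\prod_{j}\ell_{j}$, i.e.\ the differential operator $p(\partial)$, in place of a $p$-fold convolution. This buys you three things the paper glosses over: (1) support control --- $DG$ stays supported in $J^{m}$, whereas convolutions of functions supported in $J^{m}$ spill into $\left[-2,2\right]^{m}$ and must be cut down, a point the paper dispatches with only the remark that ``functions are restricted to $J^{m}$''; (2) an explicit nonvanishing check, $p\,\widehat{G}\not\equiv0$ by analyticity of $\widehat{G}$, where the paper leaves $K\neq0$ implicit (it does follow there too, since a product of nontrivial entire functions is nontrivial); and (3) the identification of the hypothesis $m\geq2$, without which the corollary as stated is false (for $m=1$ the family is dense), and which is exactly where the paper's proof is also silently using dimension, since $\ker R_{w}$ is trivial when $m=1$. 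What the paper's convolution route buys in exchange is continuity with its own development: it exercises the Radon-transform kernels just constructed and prefigures the convolution constructions of \prettyref{exa:wavelet} and \eqref{eq:G7}. Your first, shorter route --- citing the immediately preceding corollary's infinite-dimensionality claim --- is logically legitimate but thin, since the paper states that corollary without proof; your self-contained construction is the substantive contribution, and it in fact supplies a proof of that earlier corollary as well.
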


\begin{proof}
For every $w\in W$, let $\left\{ R_{w}\right\} _{w\in W}$ be the
system of Radon transforms in $L^{2}\left(J^{m}\right)$, see \eqref{eq:E4},
i.e., 
\[
R_{w}\left(F\right)\left(t\right)=\int_{y\in\Pi_{w}}F\left(tw+y\right)d\sigma_{w}^{\left(m-1\right)}\left(y\right),\quad t\in\mathbb{R}.
\]
We consider function $G\in ker\left(R_{w}\right)$, so $G\perp_{L^{2}\left(J^{m}\right)}\mathscr{H}_{W}$.
Suppose $W=\left\{ w_{i}\right\} _{i=1}^{p}$, then the functions
\begin{equation}
K:=G_{w_{1}}\ast G_{w_{2}}\ast\cdots\ast G_{w_{p}}\perp_{L^{2}\left(J^{m}\right)}\mathscr{H}_{W},\label{eq:E5}
\end{equation}
as $G_{w_{i}}\in ker\left(R_{w_{i}}\right)$ are chosen. Then $K$
is in $L^{2}\left(J^{m}\right)\ominus\mathscr{H}_{W}$. The operation
$\ast$ in \eqref{eq:E5} denotes convolution. 
\end{proof}

\section{Reproducing kernel and Shannon interpolation}

Starting with $m=2$, we shall display a complete list of points $w\in\mathbb{Z}^{2}\backslash\left(0\right)$
such that the corresponding subspaces $\mathscr{H}'\left(w\right):=\mathscr{H}\left(w\right)\ominus\mathbb{C}\mathbbm{1}$
are mutually orthogonal. There is also the analogous question for
$\mathbb{Z}^{m}$, $m>2$. The trick is to make a list of points $w$
in $\mathbb{Z}^{m}\backslash\left(0\right)$ such that the integer
multiples $nw$, $n\in\mathbb{Z}$ (i.e., integer lines), cover $\mathbb{Z}^{m}$
with no overlap other than in 0. The partitions of $\mathbb{Z}^{m}$
corresponds to equivalence classes in $\mathbb{Z}^{m}$, hence non-overlap.
We then make a system of orthogonal subspaces $\mathscr{H}'\left(w\right)$
which is also total in $L^{2}\left(J^{m}\right)$. This is made precise
in \prettyref{lem:F11}; also see Examples \ref{exa:F12}--\ref{exa:F13}.\\

The functions in \eqref{eq:D7} assume the multivariable Shannon interpolation,
and $\{\widehat{F}\}_{F\in L^{2}\left(J^{m}\right)}$ is a reproducing
kernel Hilbert space (RKHS) with the following Hilbert norm: 
\begin{equation}
\Vert\widehat{F}\Vert_{RKHS}^{2}:=\left\Vert F\right\Vert _{L^{2}\left(J^{m}\right)}^{2}=\int_{J^{m}}\left|F\left(x\right)\right|^{2}d^{m}x.
\end{equation}
See, e.g., \cite{MR0233416,MR766625,MR1036623,MR1200633,MR1225669}.

The case $m>1$, $L^{2}\left(J^{m}\right)$, leads to a multivariable
Shannon interpolation for the Fourier transform $\widehat{F}\left(\xi\right):=\int_{J^{m}}F\left(x\right)e^{ix\cdot\xi}d^{m}x$,
$\xi\in\mathbb{R}^{m}$:
\begin{align}
\widehat{F}\left(\xi\right) & =\sum_{\lambda\in\left(\mathbb{Z}/\pi\right)^{m}}\widehat{F}\left(\lambda\right)K_{m}\left(\xi-\lambda\right),\quad\forall\xi\in\mathbb{R}^{m},\label{eq:F2}
\end{align}
where $\left(\mathbb{Z}/\pi\right)^{m}$ is the dual lattice, and
$\widehat{F}\left(\lambda\right)$ are the Fourier coefficients.
\begin{defn}
Fix $m$, and $W\subset\mathbb{R}^{m}\backslash\left(0\right)$ a
finite subset. Set 
\begin{align}
W_{\mathscr{M}}^{\perp} & :=\big\{\text{signed finite total variation measure \ensuremath{\mu} on \ensuremath{J^{m}} s.t.}\nonumber \\
 & \qquad\ensuremath{\int\varphi\left(w^{T}x\right)d\mu\left(x\right)=0,\;\forall\varphi\in C_{b}\left(\mathbb{R},\mathbb{R}\right),}\text{i.e.,}\nonumber \\
 & \qquad\widehat{\mu}\left(tw\right)=0,\;\forall t\in\mathbb{R},\:\forall w\in W\big\}\label{eq:F4}
\end{align}
and 
\begin{align}
W_{2}^{\perp} & :=\big\{ F\in L^{2}\left(J^{m}\right)\mathrel{;}\int_{J^{m}}\varphi\left(w^{T}x\right)F\left(x\right)d^{m}x=0,\;\forall\varphi,\;\text{i.e., }\nonumber \\
 & \qquad\widehat{F}\left(tw\right)=0,\;\forall t\in\mathbb{R},\;\forall w\in W\big\}.\label{eq:F5}
\end{align}
The orthogonal complement ``$\perp$'' in \eqref{eq:F5} refers
to $L^{2}\left(J^{m}\right)$. Note that $W_{2}^{\perp}\subset W_{\mathscr{M}}^{\perp}$
since, if $F\in L^{2}\left(J^{m}\right)$, $d\mu\left(x\right)=F\left(x\right)d^{m}x$
is in $\mathscr{M}$. (Also see \eqref{eq:D2b}, and \prettyref{lem:Wp}.)
\end{defn}

If we study approximations in $L^{2}\left(J^{m}\right)$, then the
question is: Fix $F\in L^{2}\left(J^{m}\right)$ s.t. $\left\langle F,\varphi\left(w^{T}x\right)\right\rangle _{L^{2}\left(J^{m}\right)}=0$,
$\forall w\in W$, $\forall\varphi$. We shall also consider signed
measures $\mu$ s.t. $d\mu\left(x\right)=F\left(x\right)d^{m}x$,
then each condition for $\mu\in W_{\mathscr{M}}^{\perp}$ translates
into $\widehat{F}\left(tw\right)=0$, $\forall w\in W$, $t\in\mathbb{R}$.
See \prettyref{lem:D3}.

More specifically, it follows from \eqref{eq:F2} that
\begin{equation}
\widehat{F}\left(tw\right)=\sum_{\lambda\in\left(\mathbb{Z}/\pi\right)^{m}}\widehat{F}\left(\lambda\right)K_{m}\left(tw-\lambda\right)\label{eq:F3}
\end{equation}
Equation \eqref{eq:F3} is entire analytic; and our condition takes
the form: 
\begin{gather*}
\widehat{F}\left(tw\right)=0,\quad\forall t\in\mathbb{R}\\
\Updownarrow\\
\left(\frac{d}{dt}\right)^{k}\widehat{F}\left(tw\right)\Big|_{t=0}=0,\quad\forall k\in\mathbb{N}_{0}\\
\Updownarrow\\
\sum_{\lambda\in\left(\mathbb{Z}/\pi\right)^{m}}\widehat{F}\left(\lambda\right)K_{m}^{\left(k\right)}\left(w-\lambda\right)=0,\\
\forall w\in W,\:\forall k\in\mathbb{N}_{0}.
\end{gather*}

\begin{lem}
Let $w,w'\in\mathbb{Z}^{m}\backslash\left(0\right)$, and assume $w\neq w'$;
then the following orthogonality relation holds: 
\begin{equation}
\underset{\left\langle e_{w},e_{w'}\right\rangle _{L^{2}\left(J^{m}\right)}}{\underbrace{\frac{1}{2^{m}}\int_{J^{m}}e^{i\pi w^{T}x}\overline{e^{i\pi w'^{T}x}}\,d^{m}x}}=\underset{\delta\left(w-w'\right)}{\underbrace{\vphantom{\frac{1}{2^{m}}\int_{J^{m}}}\delta\left(w_{1}-w'_{1}\right)\delta\left(w_{2}-w'_{2}\right)\cdots\delta\left(w_{m}-w'_{m}\right)}.}\label{eq:F6}
\end{equation}
 
\end{lem}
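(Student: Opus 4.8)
The plan is to exploit the product (tensor) structure of both the integrand and the normalized measure, reducing the $m$-dimensional integral to a product of $m$ one-dimensional integrals, each of which is the elementary orthogonality relation for the classical Fourier system on the interval $\left[-1,1\right]$. Since the factored computation is routine, the only substantive point is recording that membership of $w,w'$ in the integer lattice $\mathbb{Z}^{m}$ forces each difference $w_{j}-w_{j}'$ to be an integer, and this is precisely what makes the off-diagonal terms vanish.

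First I would write $e^{i\pi w^{T}x}\overline{e^{i\pi w'^{T}x}}=e^{i\pi\left(w-w'\right)^{T}x}=\prod_{j=1}^{m}e^{i\pi\left(w_{j}-w_{j}'\right)x_{j}}$, and observe that the normalized Lebesgue measure likewise factors, $\frac{1}{2^{m}}d^{m}x=\prod_{j=1}^{m}\frac{1}{2}dx_{j}$ on $J^{m}=\left[-1,1\right]^{m}$. By Fubini's theorem (the integrand is bounded on a finite-measure domain, so integrability is immediate) the left-hand side of \eqref{eq:F6} becomes
\[
\prod_{j=1}^{m}\frac{1}{2}\int_{-1}^{1}e^{i\pi\left(w_{j}-w_{j}'\right)x_{j}}\,dx_{j}.
\]

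Next I would evaluate each factor. Writing $n:=w_{j}-w_{j}'\in\mathbb{Z}$, a direct antiderivative computation gives $\frac{1}{2}\int_{-1}^{1}e^{i\pi nx}\,dx=\frac{\sin\left(\pi n\right)}{\pi n}$ for $n\neq0$, and $=1$ for $n=0$. Because $n$ is an integer, $\sin\left(\pi n\right)=0$, so the factor equals the Kronecker delta $\delta\left(w_{j}-w_{j}'\right)$ in the notation of \eqref{eq:F6}. Taking the product over $j$ yields $\prod_{j=1}^{m}\delta\left(w_{j}-w_{j}'\right)$, which is the right-hand side of \eqref{eq:F6}. Finally, since $w\neq w'$ there is at least one index $j$ with $w_{j}\neq w_{j}'$, whence that factor vanishes and the whole inner product is zero, as claimed.

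I do not expect a genuine obstacle here: the assertion is the standard orthonormality of the exponentials $\left\{ e^{i\pi kx}\right\} _{k\in\mathbb{Z}}$ on $\left[-1,1\right]$, tensored across the $m$ coordinate directions. The only place to be slightly careful is notational---interpreting the $\delta$'s on the right of \eqref{eq:F6} as Kronecker deltas (legitimate because the arguments $w_{j}-w_{j}'$ range over the integers), rather than Dirac masses---and confirming that the integer-lattice hypothesis $w,w'\in\mathbb{Z}^{m}$ is exactly what is needed for the sinc values to land on their integer zeros.
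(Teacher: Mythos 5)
Your proposal is correct and follows essentially the same route as the paper's proof: factor the exponential and the normalized measure over the $m$ coordinates, reduce to a product of one-dimensional integrals $\frac{1}{2}\int_{-1}^{1}e^{i\pi\left(w_{j}-w_{j}'\right)x_{j}}\,dx_{j}$, and evaluate each as a Kronecker delta since $w_{j}-w_{j}'\in\mathbb{Z}$. Your added remarks (Fubini, the explicit $\sin\left(\pi n\right)/\left(\pi n\right)$ computation, and the Kronecker-versus-Dirac reading of $\delta$) merely spell out steps the paper leaves implicit.
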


\begin{proof}
$L^{2}$-inner products: 
\begin{align*}
\text{LHS}_{\left(\ref{eq:F6}\right)} & =\frac{1}{2^{m}}\left(\int_{-1}^{1}e^{i\pi\left(w_{1}-w_{1}'\right)x_{1}}dx_{1}\right)\left(\int_{-1}^{1}e^{i\pi\left(w_{2}-w_{2}'\right)x_{2}}dx_{2}\right)\cdots\\
 & \qquad\cdots\left(\int_{-1}^{1}e^{i\pi\left(w_{m}-w_{m}'\right)x_{m}}dx_{m}\right)\\
 & =\delta\left(w_{1}-w'_{1}\right)\delta\left(w_{2}-w'_{2}\right)\cdots\delta\left(w_{m}-w'_{m}\right)\\
 & =\delta\left(w-w'\right)\quad\left(\text{in the abbreviated notation.}\right)
\end{align*}
\end{proof}
\begin{lem}
If $\varphi,\psi\in\mathscr{C}$, we also get orthogonality when $w\neq w'$
and inequivalent, then 
\begin{equation}
\int_{J^{m}}\varphi\left(w^{T}x\right)\overline{\psi\left(w'^{T}x\right)}d^{m}x=0\label{eq:F7}
\end{equation}
unless $\varphi$ and $\psi$ contain constant components.
\end{lem}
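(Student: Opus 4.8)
The plan is to reduce \eqref{eq:F7} to the orthogonality relation \eqref{eq:F6} by passing to the one-dimensional Fourier expansions of $\varphi$ and $\psi$. Here $w,w'\in\mathbb{Z}^{m}\backslash\left(0\right)$, so that \eqref{eq:F6} is available. Following the convention of Section~\ref{sec:mf}, write the period-$2$ expansions $\varphi\left(s\right)=\sum_{k\in\mathbb{Z}}a_{k}e^{i\pi ks}$ and $\psi\left(s\right)=\sum_{l\in\mathbb{Z}}b_{l}e^{i\pi ls}$, so that the ``constant components'' in the statement are exactly the zeroth coefficients $a_{0}=\frac{1}{2}\int_{-1}^{1}\varphi\left(s\right)ds$ and $b_{0}=\frac{1}{2}\int_{-1}^{1}\psi\left(s\right)ds$. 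Substituting gives
\[
\varphi\left(w^{T}x\right)=\sum_{k\in\mathbb{Z}}a_{k}e^{i\pi kw\cdot x},\qquad\psi\left(w'^{T}x\right)=\sum_{l\in\mathbb{Z}}b_{l}e^{i\pi lw'\cdot x},
\]
so these two functions are supported, in the Fourier domain, on the integer lines $\mathbb{Z}w$ and $\mathbb{Z}w'$ respectively.

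First I would expand the integrand in \eqref{eq:F7} and integrate term by term, reducing the left-hand side to the double sum $\sum_{k,l}a_{k}\overline{b_{l}}\int_{J^{m}}e^{i\pi kw\cdot x}\,\overline{e^{i\pi lw'\cdot x}}\,d^{m}x$. By \eqref{eq:F6}, each summand vanishes unless $kw=lw'$ as elements of $\mathbb{Z}^{m}$, in which case it contributes $2^{m}a_{k}\overline{b_{l}}$. The decisive step is the elementary observation that, since $w$ and $w'$ are nonzero and inequivalent (i.e. $w'\neq tw$ for every $t\in\mathbb{R}\backslash\left(0\right)$), the relation $kw=lw'$ with $k,l\in\mathbb{Z}$ forces $k=l=0$: if $k$ and $l$ were both nonzero we would have $w=\left(l/k\right)w'$, contradicting inequivalence, while if exactly one of them vanished the other vector would have to be $0$. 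Hence only the index $k=l=0$ survives, and
\[
\int_{J^{m}}\varphi\left(w^{T}x\right)\overline{\psi\left(w'^{T}x\right)}\,d^{m}x=2^{m}a_{0}\overline{b_{0}}.
\]
This product is zero precisely when $a_{0}=0$ or $b_{0}=0$; equivalently, orthogonality holds unless both $\varphi$ and $\psi$ have nonzero mean, which is the asserted exception of ``constant components.''

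The point requiring care is the analytic justification of the termwise integration, since for $\varphi,\psi\in\mathscr{C}$ the Fourier series need not converge pointwise. I would handle this in $L^{2}\left(J^{m}\right)$: because the exponentials $\{e^{i\pi kw\cdot x}\}_{k\in\mathbb{Z}}$ indexed by the distinct points of $\mathbb{Z}w$ are mutually orthogonal with squared norm $2^{m}$, the partial sums $\sum_{\left|k\right|\leq N}a_{k}e^{i\pi kw\cdot x}$ form a Cauchy sequence in $L^{2}\left(J^{m}\right)$ (their squared norms equal $2^{m}\sum\left|a_{k}\right|^{2}=2^{m-1}\int_{-1}^{1}\left|\varphi\right|^{2}<\infty$ since $\varphi$ is continuous on the compact interval), and likewise for $\psi$. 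Continuity of the $L^{2}\left(J^{m}\right)$ inner product in each slot then legitimizes the interchange: evaluating $\left\langle \varphi\left(w^{T}\cdot\right),\psi\left(w'^{T}\cdot\right)\right\rangle $ by first fixing a finite truncation in $k$ and letting the $l$-sum close up already isolates the single frequency $kw=lw'=0$, turning the formal manipulation into a convergent $\ell^{2}$ pairing of the coefficient sequences whose only common Fourier frequency is $0$.
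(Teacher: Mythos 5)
Your proposal is correct and follows essentially the same route as the paper's own proof: expand $\varphi$ and $\psi$ in their period-$2$ Fourier series, reduce the inner product to a double sum of exponentials via the orthogonality relation \eqref{eq:F6}, and observe that inequivalence of $w$ and $w'$ forces $kw\neq lw'$ for $k,l\in\mathbb{Z}\backslash\left(0\right)$, leaving only the $k=l=0$ term $2^{m}a_{0}\overline{b_{0}}$. Your closing paragraph justifying the termwise integration in $L^{2}\left(J^{m}\right)$ is a welcome refinement of a step the paper passes over silently, but it does not change the argument.
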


\begin{proof}
Note the assumption is that $w$ and $w'$ are inequivalent, so $kw\neq lw'$,
$\forall k,l\in\mathbb{Z}\backslash\left(0\right)$.

Use standard Fourier expansions for the two functions $\varphi$ and
$\psi$: 
\begin{equation}
\varphi\left(s\right)=\sum_{k\in\mathbb{Z}}a_{k}e^{i\pi ks},\quad\psi\left(s\right)=\sum_{k\in\mathbb{Z}}b_{k}e^{i\pi ks},\quad s\in\mathbb{R}\label{eq:F8}
\end{equation}
with Fourier coefficients $\left(a_{k}\right)_{k\in\mathbb{Z}}$ and
$\left(b_{k}\right)_{k\in\mathbb{Z}}$. Now substitute \eqref{eq:F8}
into \eqref{eq:F7}, 
\begin{equation}
\varphi\left(w^{T}x\right)=\sum_{k\in\mathbb{Z}}a_{k}e^{i\pi kw\cdot x},\;\text{and}\quad\psi\left(w^{T}x\right)=\sum_{l\in\mathbb{Z}}b_{l}e^{i\pi lw'\cdot x},\quad x\in\mathbb{R}^{m},\label{eq:F9}
\end{equation}
then 
\begin{align*}
\left\langle \varphi\left(w^{T}\cdot\right),\psi\left(w'^{T}\cdot\right)\right\rangle _{L^{2}\left(J^{m}\right)} & =\underset{k,\,l\in\mathbb{Z}}{\sum\sum}a_{k}\overline{b}_{l}\underset{=\delta\left(kw-lw'\right)}{\underbrace{\int_{J^{m}}e^{i\pi\left(kw-lw'\right)\cdot x}d^{m}x}},
\end{align*}
which vanishes unless $k=l=0$, and the latter correspond to the constant
functions; see \eqref{eq:F9}, i.e., $a_{k}=const\:\delta\left(k-0\right)$,
$b_{l}=const\:\delta\left(l-0\right)$, $k,l\in\mathbb{Z}$. 
\end{proof}
In addition to the specific functions $F\in W_{2}^{\perp}$ we list
above (covering some configurations), there are many more. Now we
give a characterization which is based on orthogonality relations. 
\begin{example}[orthogonality, $m=2$]
\label{exa:F3} If $m=2$, then $F\left(x,y\right)=xy\in W_{2}^{\perp}$
where $W:=\left\{ \left(\begin{smallmatrix}1\\
0
\end{smallmatrix}\right),\left(\begin{smallmatrix}0\\
1
\end{smallmatrix}\right)\right\} $. To see this, one checks directly that 
\begin{eqnarray*}
 &  & \int_{-1}^{1}\int_{-1}^{1}\left(f\left(x\right)+g\left(y\right)\right)xy\,\underset{d^{2}x}{\underbrace{dxdy}}\\
 & = & \left\langle f\left(x\right)+g\left(y\right),F\right\rangle _{L^{2}\left(J^{2}\right)}\\
 & = & \left(\int_{-1}^{1}xf\left(x\right)dx\right)\underset{=0}{\underbrace{\left(\int_{-1}^{1}ydy\right)}}+\underset{=0}{\underbrace{\left(\int_{-1}^{1}xdx\right)}}\left(\int_{-1}^{1}yf\left(y\right)dy\right)=0.
\end{eqnarray*}
Hence $F=xy$ is orthogonal to $\mathscr{H}_{W}$. So $P_{W}\left(xy\right)=0$,
and 
\[
\inf_{g\in\mathscr{H}_{W}}\left\Vert xy-g\right\Vert _{L^{2}}^{2}=\left\Vert xy\right\Vert _{L^{2}}^{2}=\left(\frac{2}{3}\right)^{2}.
\]
By the same argument, if $F\left(x,y\right)=\varphi\left(x\right)\psi\left(y\right)$,
assumed nonzero, where $\varphi$ and $\psi$ are odd functions, then
$P_{W}F=0$, and so 
\[
\inf_{g\in\mathscr{H}_{W}}\left\Vert F-g\right\Vert _{L^{2}}^{2}=\left\Vert F\right\Vert _{L^{2}}^{2}=\left(\int_{-1}^{1}\varphi^{2}dx\right)\left(\int_{-1}^{1}\psi^{2}dy\right)>0.
\]
In particular, $W_{2}^{\perp}$ is infinite dimensional.
\end{example}

\begin{example}[$m=2$]
 Let $W:=\left\{ \left(\begin{smallmatrix}1\\
0
\end{smallmatrix}\right),\left(\begin{smallmatrix}0\\
1
\end{smallmatrix}\right)\right\} $, and 
\[
\mathscr{A}_{W}=\left\{ f\left(x\right)+g\left(y\right)\mathrel{;}f,g\in\mathscr{C}\right\} .
\]
Then 
\begin{align}
L^{2}\left(J^{2}\right)\ominus\mathscr{A}_{W} & =\Big\{ F\in L^{2}\left(J^{2}\right)\;\text{s.t.}\int_{-1}^{1}F\left(x,\cdot\right)dx=0,\\
 & \qquad\text{and }\int_{-1}^{1}F\left(\cdot,y\right)dy=0\Big\}.\nonumber 
\end{align}
The functions $F$ may be written in terms of the Fourier expansions:
\begin{equation}
F=\underset{k,\,n\in\mathbb{Z}}{\sum\sum}c_{k,n}e^{i\pi\left(kx+ny\right)},\quad\left\Vert F\right\Vert _{L^{2}}^{2}=\underset{k,\,n\in\mathbb{Z}}{\sum\sum}\left|c_{k,n}\right|^{2}.\label{eq:F11}
\end{equation}
Then 
\begin{gather}
f\in L^{2}\left(J^{2}\right)\ominus\mathscr{A}_{W}\nonumber \\
\Updownarrow\label{eq:F12}\\
c_{0,n}=0,\;c_{k,0}=0,\quad\forall n,k\in\mathbb{Z}.\nonumber 
\end{gather}
\end{example}

\begin{proof}
Compute the marginal Fourier coefficients in \eqref{eq:F11},
\[
\int_{-1}^{1}F\left(x,y\right)dx=\sum_{n\in\mathbb{Z}}c_{0,n}e^{i\pi ny}=0,\quad\forall y
\]
and 
\[
\int_{-1}^{1}F\left(x,y\right)dy=\sum_{k\in\mathbb{Z}}c_{k,0}e^{i\pi kx}=0,\quad\forall x,
\]
and \eqref{eq:F12} follows.
\end{proof}
Hence we can rewrite all questions in terms of Fourier coefficients
$c:=\left\{ c_{k,n}\right\} _{k,n\in\mathbb{Z}}$, 
\[
F\longleftrightarrow c_{k,n}\underset{\text{Defn}}{\Longleftrightarrow}F\left(x,y\right)=\underset{k,n}{\sum\sum}c_{k,n}e^{i\pi\left(kx+ny\right)}.
\]
And we have
\begin{align*}
P_{W}\left(c\right) & =span\left\{ c_{k,0},c_{0,n}\right\} _{k,n\in\mathbb{Z}}\\
 & =\begin{bmatrix} &  &  & \vdots\\
 &  &  & c_{0,2}\\
 &  &  & c_{0,1}\\
\cdots & c_{-2,0} & c_{-1,0} & c_{0,0} & c_{1,0} & c_{2,0} & \cdots\\
 &  &  & c_{0,-1}\\
 &  &  & c_{0,-2}\\
 &  &  & \vdots
\end{bmatrix}
\end{align*}
where $P_{W}$ is the projection onto $\mathscr{H}_{W}$ inside $L^{2}\left(J^{2}\right)$. 
\begin{example}
Use of orthogonality of $\left\{ e^{i\pi\left(kx+ny\right)}\right\} _{\left(k,n\right)\in\mathbb{Z}^{2}}$.
For example, if $F=Ae^{i\pi x}+Be^{i\pi\left(x+y\right)}$ then 
\[
dist\left(F,\mathscr{H}_{W}\right)=\inf_{g\in\mathscr{H}_{W}}\left\Vert F-g\right\Vert _{L^{2}\left(J^{2}\right)}=\left\Vert P_{W}^{\perp}F\right\Vert _{L^{2}\left(J^{2}\right)}=\left|B\right|;
\]
since $P_{W}\left(F\right)=Ae^{i\pi x}$, and $P_{W}^{\perp}\left(F\right)=Be^{i\pi\left(x+y\right)}$.
\end{example}

The following computation works more generally for $\varphi\left(w^{T}x\right)$,
$\varphi\in C\left(\mathbb{R},\mathbb{C}\right)$, $x=\left(x_{1},\cdots,x_{m}\right)$,
$w\in\mathbb{R}^{m}\backslash\left(0\right)$ fixed. But it is helpful
to specialize to $m=2$, $\left(x,y\right)\in\mathbb{R}^{2}$, and
$w=\left(\begin{smallmatrix}1\\
0
\end{smallmatrix}\right)=e_{1}$; we must then compute the Fourier coefficients of sum $e_{1}^{T}\left(x,y\right)=x$. 

In 2D, $F\left(x,y\right)=\varphi\left(x\right)$, with Fourier coefficients
indexed by $\mathbb{Z}^{2}$: 
\begin{align*}
c\left(n,k\right) & =\frac{1}{4}\iint_{J^{2}}\varphi\left(x\right)e^{-i\pi\left(nx+ky\right)}dxdy\\
 & =\underset{\widehat{\varphi}\left(n\right)}{\underbrace{\frac{1}{2}\int_{-1}^{1}\varphi\left(x\right)e^{-i\pi nx}dx}}\:\underset{\delta\left(0-k\right)}{\underbrace{\frac{1}{2}\int_{-1}^{1}e^{-i\pi ky}dy}}\\
 & =\widehat{\varphi}\left(n\right)\delta\left(0-k\right),\quad\forall\left(n,k\right)\in\mathbb{Z}^{2},
\end{align*}
where $\delta\left(0-k\right)=\begin{cases}
1 & \text{if \ensuremath{k=0}}\\
0 & \text{if \ensuremath{k\in\mathbb{Z}\backslash\left(0\right)}}
\end{cases}$. 

Conclusion: $c\left(n,k\right)=\widehat{\varphi}\left(n\right)\delta\left(0-k\right)$,
$\forall\left(n,k\right)\in\mathbb{Z}^{2}$, and 
\[
F\left(x,y\right)=\varphi\left(x\right)=\sum_{n\in\mathbb{Z}}\widehat{\varphi}\left(n\right)e^{i\pi nx}
\]
which is the usual 1-dimensional Fourier expansion. This is a special
case of a more general formula: 

Consider $\mathbb{R}^{m}$, $J^{m}$, $W\subset\mathbb{R}^{m}\backslash\left(0\right)$.
Assume $w\in\mathbb{Z}^{m}$, $w\in W$ fixed. Let $F\left(x\right)=\varphi\left(w^{T}x\right)$,
$x\in J^{m}$, $\varphi:\mathbb{R}\longrightarrow\mathbb{C}$ (or
$\mathbb{R}\longrightarrow\mathbb{R}$). Note $\varphi$ is a function
on one coordinate. Without loss of generality, assume $\left\Vert w\right\Vert =1$
and let $P_{w}$ be the projection 
\begin{align*}
P_{w}\left(x\right) & =\left(w^{T}x\right)w,\;\text{and}\\
P_{w}\left(\mathbb{Z}^{d}\right) & =\left\{ \left(w^{T}n\right)w\mathrel{;}n\in\mathbb{Z}^{d}\right\} 
\end{align*}
then 
\[
F\left(x\right)=\sum_{\lambda\in P_{w}\left(\mathbb{Z}^{d}\right)}\widehat{\varphi}\left(\lambda\right)e^{i\pi\lambda P_{w}\left(x\right)}.
\]
So if we select $w\in\mathbb{Z}^{m}\backslash\left(0\right)$, then
functions in $\mathscr{H}_{w}$ may give the Fourier expansion $F\left(x\right)=\varphi\left(w^{T}x\right)$,
\[
F\left(x\right)=\sum_{k\in\mathbb{Z}}\widehat{\varphi}\left(k\right)e^{ikw^{T}x}
\]
up to normalization. In the calculation of the 1D Fourier coefficients,
\begin{align*}
\widehat{\varphi}\left(k\right) & =\int_{-1}^{1}\varphi\left(x\right)e^{-i\pi ks}ds,\;\text{and}\\
\varphi\left(s\right) & =\sum_{k\in\mathbb{Z}}\widehat{\varphi}\left(k\right)e^{i\pi ks},\quad s\in\mathbb{R},\:k\in\mathbb{Z}.
\end{align*}
In the general case, $w_{1},\cdots,w_{p}\in\mathbb{R}^{m}\backslash\left(0\right)$,
and may assume independent, and also a choice of $w_{k}\in\mathbb{Z}^{m}$;
for $F\in\mathscr{H}_{w}$,
\[
F\left(x\right)=\varphi_{1}\left(w_{1}^{T}x\right)+\varphi_{2}\left(w_{2}^{T}x\right)+\cdots+\varphi_{p}\left(w_{p}^{T}x\right)
\]
$\varphi_{1},\varphi_{2},\cdots,\varphi_{p}\in\mathscr{C}:=C_{b}\left(\mathbb{R},\mathbb{C}\right)$,
or $C_{b}\left(\mathbb{R},\mathbb{R}\right)$. After a renormalization,
\begin{align*}
F\left(x\right) & =\sum_{k\in\mathbb{Z}}\widehat{\varphi}_{1}\left(k\right)e^{ikw_{1}^{T}x}+\cdots+\sum_{k\in\mathbb{Z}}\widehat{\varphi}_{p}\left(k\right)e^{ikw_{p}^{T}x}\\
 & =\sum_{\left(k_{1},\cdots,k_{p}\right)\in\mathbb{Z}^{m}}\sum_{j=1}^{p}\widehat{\varphi}_{j}\left(k_{j}\right)e^{i\pi k_{j}w_{j}^{T}x}.
\end{align*}

\begin{lem}
If $w,w'\in P\left(\mathbb{R}^{m}\right)$ are distinct, equivalent
class, then assume $w$ and $w'$ both rational. The two subspaces
$\mathscr{H}_{w}$ and $\mathscr{H}_{w'}$ in $L^{2}\left(J^{m}\right)$
are orthogonal, i.e., 
\begin{equation}
\int_{J^{m}}F\left(x\right)\overline{F'\left(x\right)}d^{m}x=0,\quad\forall F\in\mathscr{H}_{w},\;\forall F'\in\mathscr{H}_{w'}\label{eq:F15}
\end{equation}
 unless the functions are constant. 
\end{lem}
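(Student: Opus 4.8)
The plan is to reduce the orthogonality of the whole subspaces to the orthogonality of the exponentials $e^{i\pi k w\cdot x}$ recorded in \eqref{eq:F6}, and then to isolate the single Fourier mode that can survive. Since $w$ and $w'$ are rational, I first clear denominators; using the scaling invariance $\mathscr{H}\left(w\right)=\mathscr{H}\left(tw\right)$ for $t\neq0$ (noted after \eqref{eq:C2}), this replaces $w,w'$ by integer representatives of their projective classes without altering either subspace. Thus I may assume $w,w'\in\mathbb{Z}^{m}\setminus\left(0\right)$ and inequivalent.

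Next I would use the Fourier-support description from Section \ref{sec:mf}: each $F\in\mathscr{H}\left(w\right)$ has Fourier spectrum contained in the integer line $\mathbb{Z}w$, so $F=\sum_{k\in\mathbb{Z}}a_{k}\,e^{i\pi kw\cdot x}$, and likewise $F'=\sum_{l\in\mathbb{Z}}b_{l}\,e^{i\pi lw'\cdot x}$. By \eqref{eq:F6} each of $\left\{ e^{i\pi kw\cdot x}\right\} _{k}$ and $\left\{ e^{i\pi lw'\cdot x}\right\} _{l}$ is orthonormal (injectivity of $k\mapsto kw$ for $w\neq0$), so $\left(a_{k}\right),\left(b_{l}\right)\in\ell^{2}$ and the following double series converges absolutely by Cauchy--Schwarz:
\begin{equation}
\left\langle F,F'\right\rangle _{L^{2}\left(J^{m}\right)}=\sum_{k,l\in\mathbb{Z}}a_{k}\overline{b_{l}}\,\left\langle e^{i\pi kw\cdot x},e^{i\pi lw'\cdot x}\right\rangle _{L^{2}\left(J^{m}\right)}=\sum_{k,l\in\mathbb{Z}}a_{k}\overline{b_{l}}\,\delta\left(kw-lw'\right),
\end{equation}
the last equality again by \eqref{eq:F6}.

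The decisive point is elementary number theory: for inequivalent integer $w,w'$ the equation $kw=lw'$ forces $k=l=0$, because $k\neq0$ would give $l\neq0$ (as $w'\neq0$) and hence $w=\left(l/k\right)w'$, i.e.\ $w\sim w'$, a contradiction. Therefore the only surviving term of the double sum is $k=l=0$, so $\left\langle F,F'\right\rangle =a_{0}\overline{b_{0}}$, the product of the two constant components. Hence the inner product vanishes whenever either $F$ or $F'$ has no constant term, which is precisely the assertion \eqref{eq:F15} once the shared constants $\mathbb{C}\mathbbm{1}\subset\mathscr{H}\left(w\right)\cap\mathscr{H}\left(w'\right)$ are removed. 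The main obstacle is not this final bookkeeping but the two structural inputs it rests on: the passage to integer representatives, and the justification that the Fourier spectrum of a general $F\in\mathscr{H}\left(w\right)$ really sits on the discrete line $\mathbb{Z}w$ so that \eqref{eq:F6} may be applied mode by mode; once these are in hand the computation closes immediately.
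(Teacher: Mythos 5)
Your proposal is correct and takes essentially the same route as the paper's own proof: pass to integer representatives of the two projective classes, expand $F$ and $F'$ in Fourier series supported on the integer lines $\mathbb{Z}w$ and $\mathbb{Z}w'$, and use $\mathbb{Z}w\cap\mathbb{Z}w'=\left(0\right)$ together with the orthogonality relation \eqref{eq:F6} to conclude that only the constant mode $a_{0}\overline{b_{0}}$ survives. The structural input you flag, namely that the spectrum of a general $F\in\mathscr{H}\left(w\right)$ sits on $\mathbb{Z}w$ (which the paper secures by restricting to $2$-periodic $\varphi$, cf. Section \ref{sec:fr}), is asserted with the same brevity in the paper's sketch, so your write-up is if anything the more explicit of the two.
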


\begin{proof}
Select $w,w'\in\mathbb{Z}^{m}\backslash\left(0\right)$ and compute
the Fourier expansions of the two functions, $F\left(x\right)$ with
coefficients in $\mathbb{Z}w$, and $F'\left(x\right)$ with coefficients
in $\mathbb{Z}w'$. But since $w$ and $w'$ are inequivalent, 
\begin{equation}
\mathbb{Z}w\bigcap\mathbb{Z}w'=0\;\text{in \ensuremath{\mathbb{Z}^{m}}}
\end{equation}
and so the inner product in \eqref{eq:F15} $\equiv0$ unless the
two functions $F\in\mathscr{H}_{w}$, and $F'\in\mathscr{H}_{w'}$
are constant. 
\end{proof}
As $w$ varies over $\mathbb{Z}^{m}\backslash\left(0\right)$, we
get a system of orthogonal subspaces ``nearly orthogonal'' and if
$w$ and $w'$ are inequivalent, 
\begin{align*}
\mathscr{H}\left(w\right)\cap\mathscr{H}\left(w'\right) & =\text{constant multiples of the function \ensuremath{\mathbbm{1}}}
\end{align*}
and $\mathscr{H}\left(w\right)\perp\mathscr{H}\left(w'\right)$ except
for the constants. Recall, 
\[
\mathscr{H}\left(w\right):=class^{L^{2}\left(J^{m}\right)}\left\{ \varphi\left(w^{T}x\right)\mathrel{;}\varphi\in C\left(\mathbb{R},\mathbb{C}\right)\right\} .
\]

An illustration of the subspaces in the case of $m=2$, and $w\in\mathbb{Z}^{2}\backslash\left(0\right)$.
The property orthogonality for the subspace $\mathscr{H}'\left(w\right):=\mathscr{H}\left(w\right)\ominus\mathbb{C}\mathbbm{1}$,
where $\mathbbm{1}$ is the constant function $\mathbbm{1}$ on $J^{m}$.
Hence 
\[
F\in\mathscr{H}'\left(w\right)\Longleftrightarrow\int_{J^{m}}F\left(x\right)d^{m}x=0.
\]
The argument above shows that 
\begin{equation}
\mathscr{H}'\left(w\right)\perp\mathscr{H}'\left(w'\right)
\end{equation}
where $w$ and $w'$ are inequivalent, so that 
\begin{equation}
\int_{J^{m}}F\left(x\right)\overline{F'\left(x\right)}d^{m}x=0,\quad\forall F\in\mathscr{H}'\left(w\right),\:\forall F'\in\mathscr{H}'\left(w'\right).
\end{equation}

Below is a set of independent equivalent classes (and the subspaces
are orthogonal), $k\in\mathbb{Z}\backslash\left(0\right)$ fixed. 
\begin{center}
\begin{tabular}{|c|c|c|}
\hline 
$\left(0,k\right)$ & $k\in\mathbb{Z}$ & $class\left(0,1\right)$: $\mathscr{H}\left(0,1\right)$\tabularnewline
\hline 
$\left(n,0\right)$ & $n\in\mathbb{Z}$ & $class\left(1,0\right)$: $\mathscr{H}\left(1,0\right)$\tabularnewline
\hline 
$\left(n,n\right)$ & $n\in\mathbb{Z}$ & $class\left(1,1\right)$: $\mathscr{H}\left(1,1\right)$\tabularnewline
\hline 
$\left(n,2n\right)$ & $n\in\mathbb{Z}$ & $class\left(1,2\right)$: $\mathscr{H}\left(1,2\right)$\tabularnewline
\hline 
$\vdots$ &  & \tabularnewline
\hline 
$\left(n,kn\right)$ & $n\in\mathbb{Z}$ & $class\left(1,k\right)$: $\mathscr{H}\left(1,k\right)$\tabularnewline
\hline 
\end{tabular}
\par\end{center}
\begin{rem}
We may need the points in $\mathbb{Z}^{2}$ for an orthogonal in $L^{2}\left(J^{2}\right)$,
and we get orthogonal subspaces $\left\{ \mathscr{H}\left(1,k\right)\right\} _{k\in\mathbb{Z}}$,
orthogonality modulo the constants. But if $w=\left(1,\sqrt{2}\right)$
for example, then the $\mathbb{Z}^{2}$-representation is as follows
\[
e^{i\pi\left(x+\sqrt{2}y\right)}=e^{i\pi x}\sum_{n\in\mathbb{Z}}\frac{\sin\pi\sqrt{2}}{\pi\left(\sqrt{2}-n\right)}e^{i\pi ny}\in\sum_{n\in\mathbb{Z}}^{\oplus}\mathscr{H}\left(1,n\right).
\]
Moreover, 
\[
class\left(1,0\right)\cup class\left(0,1\right)\cup_{k\in\mathbb{Z}\backslash\left(0\right)}class\left(1,k\right)\cup class\left(\left(n_{1},n_{2}\right)\right)_{n_{1}\neq n_{2}}=\mathbb{Z}^{2};
\]
see \prettyref{exa:F3}.

All the classes intersect in $\left(0,0\right)$ correspond to the
index $c_{\left(0,0\right)}\mathbbm{1}$, where $\mathbbm{1}=e^{i\pi\left(0x+0y\right)}$
in the 2D Fourier expansion: 
\[
\text{Fix \ensuremath{k}:\ensuremath{\quad}}\sum_{n\in\mathbb{Z}}a_{n}e^{i\pi n\left(x+ky\right)}\sim\mathscr{H}\left(\left(1,k\right)\right).
\]
\end{rem}

\begin{question}
Display a complete list of points $w\in\mathbb{Z}^{2}\backslash\left(0\right)$
such that the corresponding subspaces $\mathscr{H}'\left(w\right):=\mathscr{H}\left(w\right)\ominus\mathbb{C}\mathbbm{1}$
are mutually orthogonal. 
\end{question}

\begin{defn}
We say that a point in $class\left(w\right)$ is rational iff $\exists$
$\left(q_{1},\cdots,q_{m}\right)$, $q_{i}\in\mathbb{Q}$, such that
$w\sim\left(q_{1},\cdots,q_{m}\right)$. In this case, we may pick
$\left(k_{1},\cdots,k_{m}\right)\in\mathbb{Z}^{m}$ such that $w\sim\left(k_{1},\cdots,k_{m}\right)$. 

A subset $W\subset\mathbb{R}^{m}\backslash\left(0\right)$ is said
to be rational iff each class contains a rational generator. We get
$\mathscr{H}_{W}=L^{2}\left(J^{m}\right)$ iff $W\subset P\left(\mathbb{R}^{m}\right)$
contains all the rational points. 
\end{defn}

\begin{defn}
A subset $W\subset\mathbb{Z}^{m}\backslash\left(0\right)$ is said
to be \emph{complete} iff (Def.)
\begin{align}
\bigcup_{w\in W}\mathbb{Z}w & =\mathbb{Z}^{m};\;\text{and}\\
\mathbb{Z}w\bigcap\mathbb{Z}w' & =0\;\text{when \ensuremath{w\neq w'.}}
\end{align}
\end{defn}

\begin{lem}
\label{lem:F11}If $W\subset\mathbb{Z}^{m}\backslash\left(0\right)$
is complete, then 
\begin{equation}
\sum_{w\in W}^{\oplus}\mathscr{H}\left(w\right)=L^{2}\left(J^{m}\right),\;\text{with}
\end{equation}
\begin{equation}
\mathscr{H}\left(w\right)\cap\mathscr{H}\left(w'\right)=\mathbb{C}\mathbbm{1}\;\text{when \ensuremath{w\neq w'}}.\label{eq:F20}
\end{equation}
And modulo constants, the subspaces are orthogonal. \emph{(In \eqref{eq:F20},
$\mathbb{C}\mathbbm{1}$ denotes }\textup{multiples of the constant
function $\mathbbm{1}$}.\emph{)}
\end{lem}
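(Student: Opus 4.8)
The plan is to transport the entire statement to the orthonormal Fourier basis and reduce it to elementary bookkeeping about the index lines $\mathbb{Z}w\subset\mathbb{Z}^{m}$. Write $e_{n}\left(x\right)=e^{i\pi n\cdot x}$ for $n\in\mathbb{Z}^{m}$; by \eqref{eq:F6} the family $\left\{ e_{n}\right\} _{n\in\mathbb{Z}^{m}}$ is an orthonormal basis for $L^{2}\left(J^{m}\right)$. As recorded earlier in this section, for a fixed $w\in\mathbb{Z}^{m}\backslash\left(0\right)$ the 1D Fourier expansion $\varphi\left(s\right)=\sum_{k}a_{k}e^{i\pi ks}$ yields $\varphi\left(w^{T}x\right)=\sum_{k}a_{k}e_{kw}\left(x\right)$, so that $\mathscr{H}\left(w\right)=\overline{span}\left\{ e_{kw}\mathrel{;}k\in\mathbb{Z}\right\} $ is exactly the closed coordinate subspace spanned by the basis vectors indexed by the line $\mathbb{Z}w$. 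Since $\mathbbm{1}=e_{0}$ and $0=0\cdot w$ lies on every such line, we have $\mathbb{C}\mathbbm{1}\subset\mathscr{H}\left(w\right)$ for all $w$, and $\mathscr{H}'\left(w\right)=\overline{span}\left\{ e_{kw}\mathrel{;}k\in\mathbb{Z}\backslash\left(0\right)\right\} $.

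The key elementary fact I would isolate is that for any two subsets $A,B$ of an orthonormal basis one has $\overline{span}\left(A\right)\cap\overline{span}\left(B\right)=\overline{span}\left(A\cap B\right)$ and, when $A\cap B=\emptyset$, $\overline{span}\left(A\right)\perp\overline{span}\left(B\right)$; both follow at once from uniqueness of the expansion coefficients in the basis $\left\{ e_{n}\right\} $. Applying this with $A=\left\{ e_{kw}\right\} _{k}$ and $B=\left\{ e_{lw'}\right\} _{l}$ for distinct $w,w'\in W$, the completeness hypothesis gives $\mathbb{Z}w\cap\mathbb{Z}w'=\left(0\right)$, hence $A\cap B=\left\{ e_{0}\right\} $, which is precisely \eqref{eq:F20}: $\mathscr{H}\left(w\right)\cap\mathscr{H}\left(w'\right)=\mathbb{C}\mathbbm{1}$. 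Removing the common constant direction, the punctured index sets $\left\{ kw\mathrel{;}k\neq0\right\} $ and $\left\{ lw'\mathrel{;}l\neq0\right\} $ are disjoint, so $\mathscr{H}'\left(w\right)\perp\mathscr{H}'\left(w'\right)$; this is the asserted orthogonality modulo constants.

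For the totality statement I would invoke the other half of completeness, $\bigcup_{w\in W}\mathbb{Z}w=\mathbb{Z}^{m}$. Since $\left\{ e_{n}\right\} $ is an orthonormal basis, the closed span of $\bigcup_{w\in W}\left\{ e_{kw}\mathrel{;}k\in\mathbb{Z}\right\} =\left\{ e_{n}\mathrel{;}n\in\mathbb{Z}^{m}\right\} $ is all of $L^{2}\left(J^{m}\right)$. Combined with the orthogonality of the previous paragraph, the set $\mathbb{Z}^{m}\backslash\left(0\right)$ is partitioned by the punctured lines $\mathbb{Z}w\backslash\left(0\right)$, $w\in W$, giving the genuine orthogonal decomposition $L^{2}\left(J^{m}\right)=\mathbb{C}\mathbbm{1}\oplus\bigoplus_{w\in W}\mathscr{H}'\left(w\right)$, which is what $\sum_{w\in W}^{\oplus}\mathscr{H}\left(w\right)=L^{2}\left(J^{m}\right)$ must mean once the shared constants are accounted for. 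The only genuinely delicate point, and the one I would state carefully, is exactly this: the spaces $\mathscr{H}\left(w\right)$ are not literally mutually orthogonal, since they all contain $\mathbb{C}\mathbbm{1}$, so the symbol $\sum^{\oplus}$ has to be read as the orthogonal sum of the reduced spaces $\mathscr{H}'\left(w\right)$ together with a single copy of the constants. Passing from $\mathscr{H}\left(w\right)$ to $\mathscr{H}'\left(w\right)$ at the outset removes this ambiguity and turns every remaining step into a clean assertion about either the disjointness or the union of the index lines $\mathbb{Z}w$.
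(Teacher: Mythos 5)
Your argument is correct and is essentially the paper's own: the paper likewise identifies $\mathscr{H}\left(w\right)$, for $w\in\mathbb{Z}^{m}\backslash\left(0\right)$, with the closed span of $\left\{ e^{i\pi kw\cdot x}\right\} _{k\in\mathbb{Z}}$ via the 1D Fourier expansion of ($2$-periodic) $\varphi$, and then reads the two completeness conditions as disjointness of the punctured lines $\mathbb{Z}w\backslash\left(0\right)$ (yielding \eqref{eq:F20} and orthogonality modulo constants, exactly as in the preceding orthogonality lemmas) and covering of $\mathbb{Z}^{m}$ (yielding totality). Your explicit observation that $\sum^{\oplus}$ must be read as $\mathbb{C}\mathbbm{1}\oplus\bigoplus_{w\in W}\mathscr{H}'\left(w\right)$, since every $\mathscr{H}\left(w\right)$ contains the constants, makes precise a point the paper leaves implicit, but it is the same decomposition.
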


\begin{example}[$m=2$: Complete subsets in $\mathbb{Z}^{2}$]
\label{exa:F12} Let 
\[
W=\left\{ \left(1,0\right),\left(0,1\right),\left(n_{1},n_{2}\right)\right\} ,
\]
where $g.c.d\left(n_{1},n_{2}\right)=1$, and where g.c.d is short
for the \emph{greatest common divisor }(in $\mathbb{Z}_{+}$). See
Figures \ref{fig:F1}--\ref{fig:F2}.
\end{example}

\begin{figure}
\includegraphics[width=0.4\textwidth]{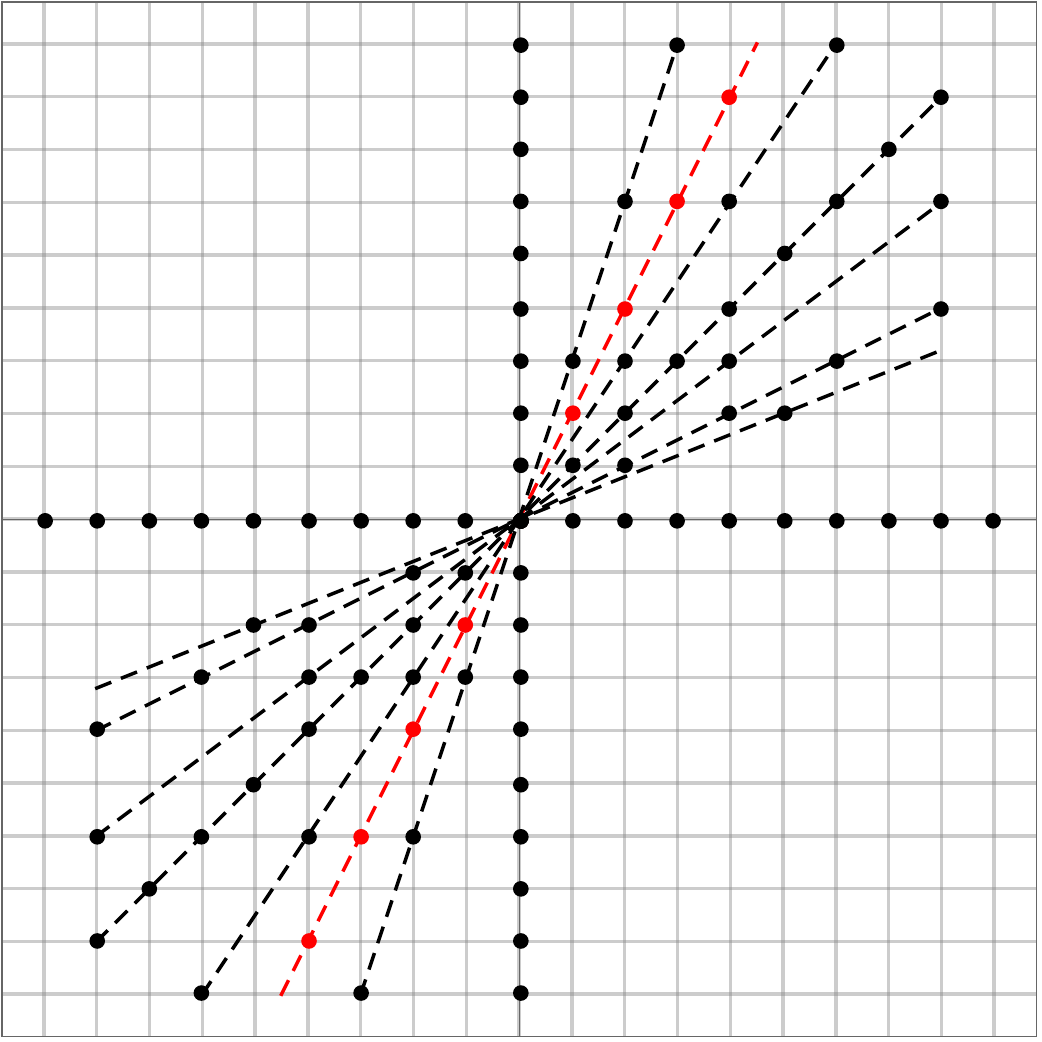}

\caption{\label{fig:F1}Part of a complete subset in $\mathbb{Z}^{2}$}

\end{figure}

\begin{figure}
\includegraphics[width=0.4\textwidth]{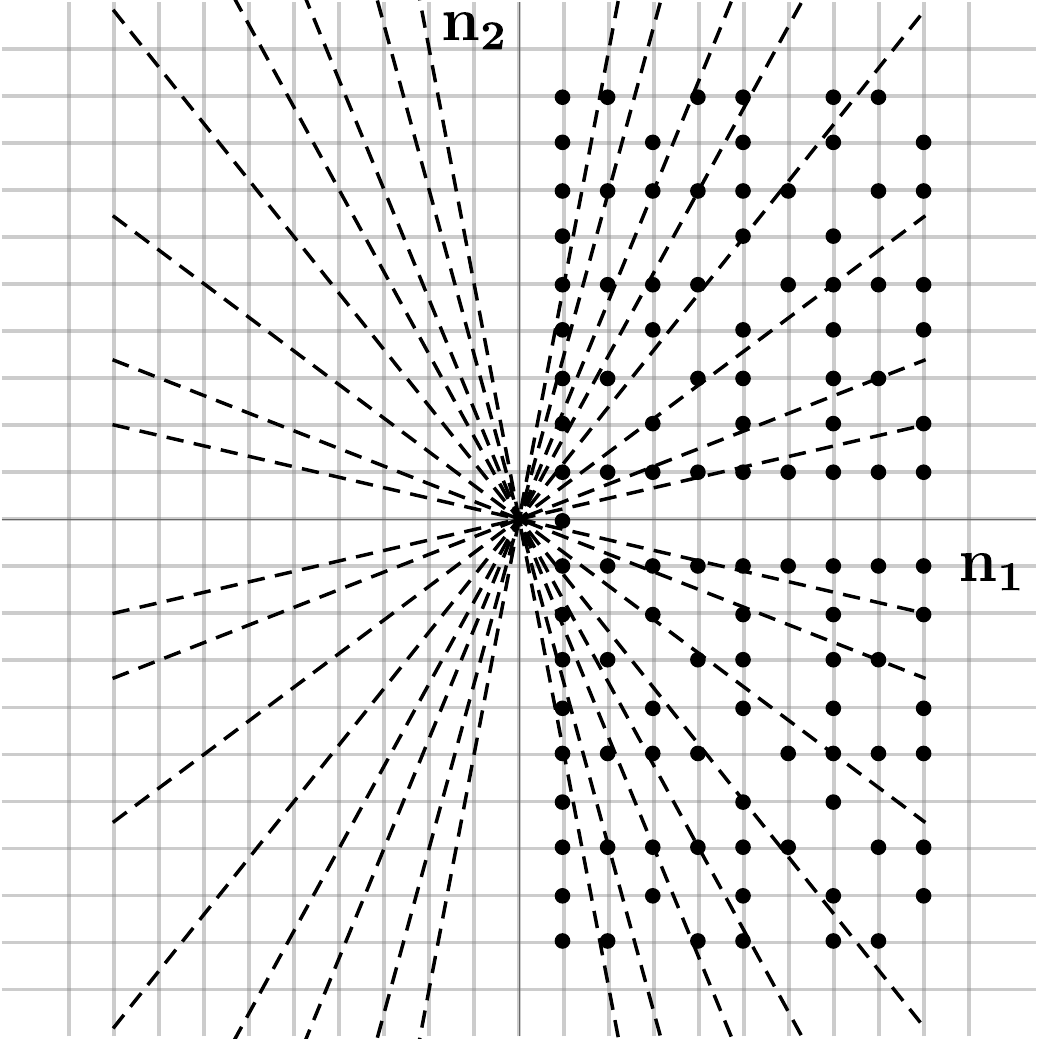}

\caption{\label{fig:F2}A subset of vectors $w$ in a set $W$ having the completeness
property from \prettyref{exa:F12}. Note that, for each discretized
line, we are specifying a $w$ yielding an irreducible direction;
one for each of the equivalence classes in $\mathbb{Z}^{2}$, as illustrated
in \prettyref{fig:F1} above. The idea is that, for a set $W$, we
pick only one vector $w$ for each of the discretized lines.}
\end{figure}

\begin{example}[$m>2$: Complete subsets in $\mathbb{Z}^{m}$]
\label{exa:F13}The union of the following subsets in $\mathbb{Z}^{m}\backslash\left(0\right)$: 
\begin{enumerate}
\item $\left(n_{1},n_{2},\cdots,n_{m}\right)\in\mathbb{Z}^{m}\backslash\left(0\right)$,
where $k$ out of the $m$ coordinates are $0$, and $k=1,2,3,\cdots,m-1$;
and g.c.d. for the non-zero coordinates = 1.
\item $\left(n_{1},n_{2},\cdots,n_{m}\right)\in\mathbb{Z}^{m}\backslash\left(0\right)$,
where all $n_{j}\neq0$, $1\leq j\leq m$, and $g.c.d\left\{ n_{j}\right\} _{j=1}^{m}=1$. 
\end{enumerate}
\end{example}

For additional details regarding reproducing kernel Hilbert spaces,
see e.g., \cite{MR3782384,MR3795680,MR3806191,MR0051437}.

\section{\label{sec:fr}Fourier representation}

The purpose of this section is to make precise a certain Fourier/harmonic
analysis representation for the UAT.

It suffices to take $\varphi\in C\left(\mathbb{R},\mathbb{C}\right)$
to be $2$-periodic, i.e., $\varphi\left(s+2n\right)=\varphi\left(s\right)$,
$\forall s\in\mathbb{R}$, $n\in\mathbb{Z}$. We then have the usual
Fourier expansion 
\begin{align*}
\varphi\left(s\right) & =\sum_{n\in\mathbb{Z}}\widehat{\varphi}\left(n\right)e^{i\pi ns},\\
\widehat{\varphi}\left(n\right) & =\frac{1}{2}\int_{-1}^{1}\varphi\left(s\right)e^{-i\pi ns}ds,
\end{align*}
and 
\[
\frac{1}{2}\int_{-1}^{1}\left|\varphi\left(s\right)\right|^{2}ds=\sum_{n\in\mathbb{Z}}\left|\widehat{\varphi}\left(n\right)\right|^{2}.
\]
Now fix $w\in\mathbb{Z}^{m}\backslash\left(0\right)$. Then
\[
F\left(x\right):=\varphi\left(w^{T}x\right)=\varphi\left(w\cdot x\right)
\]
has the representation 
\[
F\left(x\right)=\sum_{k\in\mathbb{Z}}\widehat{\varphi}\left(k\right)e^{i\pi kw^{T}x}=\sum_{k\in\mathbb{Z}}\widehat{\varphi}\left(k\right)e^{i\pi\left(kw\right)\cdot x};
\]
and 
\[
\left\Vert F\right\Vert _{L^{2}\left(J^{m}\right)}^{2}=\frac{1}{2^{m}}\int_{J^{m}}\left|F\left(x\right)\right|^{2}d^{m}x=\sum_{k\in\mathbb{Z}}\left|\widehat{\varphi}\left(k\right)\right|^{2}.
\]

\begin{rem}
~
\begin{enumerate}
\item We consider annihilation measures $\mu\in W_{\mathscr{M}}^{\perp}$,
but these measures must necessarily be singular, albeit of finite
total variation.
\item Our reasoning here extends the argument given in \prettyref{exa:wavelet}
below. The setting in the example is specialized here in order to
highlight the general idea.
\end{enumerate}
\begin{enumerate}[resume]
\item We begin with Parseval in one dimension as follows:
\[
\int_{-\infty}^{\infty}\left|\psi\right|^{2}dx=\frac{1}{2\pi}\int_{-\infty}^{\infty}\left|\widehat{\psi}\left(\xi\right)\right|^{2}d\xi=2.
\]
Fix $W\subset\mathbb{R}^{m}\backslash\left(0\right)$. Recall $\mu\in W_{\mathscr{M}}^{\perp}\Longleftrightarrow$
$\mu_{w}=0$ $\forall w\in W$ $\Longleftrightarrow$ $\widehat{\mu}_{w}\equiv0$
$\forall w\in W$ $\Longleftrightarrow$ $\widehat{\mu}\left(tw\right)=0$
$\forall t\in\mathbb{R}$, $\forall w\in W$. And one may apply this
to $d\mu\left(x\right)=F\left(x\right)d^{m}x$, $F\in L^{2}\left(J^{m}\right)$.
\end{enumerate}
\end{rem}

\begin{example}[Some wavelet functions, $m=2$]
\label{exa:wavelet}Set $\psi=$ Haar wavelet function on $\mathbb{R}$,
up to normalization, so that 
\[
\widehat{\psi}\left(\xi\right)=\frac{\cos\left(\xi\right)-1}{\xi},\quad\xi\in\mathbb{R};
\]
and $\widehat{\psi}\left(0\right)=0$. Let 
\begin{align*}
F_{1}\left(x_{1},x_{2}\right) & =f\left(x_{1}\right)\psi\left(x_{2}\right),\\
F_{2}\left(x_{1},x_{2}\right) & =\psi\left(x_{1}\right)g\left(x_{2}\right),
\end{align*}
and $f,g$ are arbitrary. Then $F=F_{1}\ast F_{2}\in W^{\perp}$,
where $W=\left\{ \left(\begin{smallmatrix}1\\
0
\end{smallmatrix}\right),\left(\begin{smallmatrix}0\\
1
\end{smallmatrix}\right)\right\} $. 

Similarly, for $m=3$, set 
\begin{align*}
F_{1}\left(x_{1},x_{2},x_{3}\right) & =f\left(x_{1},x_{2}\right)\psi\left(x_{3}\right),\\
F_{2}\left(x_{1},x_{2},x_{3}\right) & =g\left(x_{1},x_{3}\right)\psi\left(x_{2}\right),\\
F_{3}\left(x_{1},x_{2},x_{3}\right) & =h\left(x_{2},x_{3}\right)\psi\left(x_{1}\right),
\end{align*}
$f,g,h$ arbitrary. Then 
\[
F=F_{1}\ast F_{2}\ast F_{3}\in W^{\perp},
\]
where $W=\left\{ \left(\begin{smallmatrix}1\\
0\\
0
\end{smallmatrix}\right),\left(\begin{smallmatrix}0\\
1\\
0
\end{smallmatrix}\right),\left(\begin{smallmatrix}0\\
0\\
1
\end{smallmatrix}\right)\right\} $. 

The first two terms in the Taylor expansion of $\widehat{\psi}\left(\xi\right)$
is 
\[
\widehat{\psi}\left(\xi\right)=-\frac{1}{2}\xi+\frac{1}{24}\xi^{3}-\cdots
\]

In the remaining of the section, we discuss choices of sets $W$ of
admissible directions to be used in our transform analysis. As well
as some general properties for these sets.
\end{example}

\begin{cor}
$W^{\perp}$ is infinite-dimensional.
\end{cor}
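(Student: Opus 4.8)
The plan is to produce, explicitly, an infinite linearly independent family sitting inside $W^{\perp}$, reusing the convolution construction already recorded in \prettyref{exa:wavelet}. In that example $W=\left\{ e_{1},\dots,e_{m}\right\}$, and I will treat $m=2$ in detail since the passage to general $m$ is verbatim. The idea is that the free tensor factors $f,g$ appearing in \prettyref{exa:wavelet} can be varied, and that this variation is faithful, so that $W^{\perp}$ inherits the infinite dimension of the space from which $f$ is drawn.

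First I would fix the Haar function $\psi$ of \prettyref{exa:wavelet}, for which $\widehat{\psi}\left(0\right)=0$, together with a fixed $g$ satisfying $\widehat{\psi}\,\widehat{g}\not\equiv 0$ (e.g.\ $g=\psi$). For $f\in L^{2}\left(\left[-1,1\right]\right)$ set
\[
T\left(f\right):=\left(f\otimes\psi\right)\ast\left(\psi\otimes g\right)\in L^{2}\left(J^{2}\right),
\]
where $f\otimes\psi$ is the function $\left(x_{1},x_{2}\right)\mapsto f\left(x_{1}\right)\psi\left(x_{2}\right)$. Using $\left(\mu\ast\nu\right)^{\wedge}=\widehat{\mu}\,\widehat{\nu}$, the transform of $T\left(f\right)$ factors as $\widehat{T\left(f\right)}\left(\xi_{1},\xi_{2}\right)=\widehat{f}\left(\xi_{1}\right)\widehat{\psi}\left(\xi_{1}\right)\,\widehat{\psi}\left(\xi_{2}\right)\widehat{g}\left(\xi_{2}\right)$. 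On each coordinate axis one factor becomes $\widehat{\psi}\left(0\right)=0$, so $\widehat{T\left(f\right)}\left(te_{1}\right)=\widehat{T\left(f\right)}\left(te_{2}\right)=0$ for all $t\in\mathbb{R}$; by \prettyref{lem:D3} this says precisely that $T\left(f\right)\in W^{\perp}$. Hence $T$ is a linear map of $L^{2}\left(\left[-1,1\right]\right)$ into $W^{\perp}$, as the example already hints.

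It then remains to check that $T$ is injective, for this exhibits $W^{\perp}$ as containing an isomorphic copy of the infinite-dimensional space $L^{2}\left(\left[-1,1\right]\right)$. If $T\left(f\right)=0$ then $\widehat{f}\left(\xi_{1}\right)\widehat{\psi}\left(\xi_{1}\right)\,\widehat{\psi}\left(\xi_{2}\right)\widehat{g}\left(\xi_{2}\right)\equiv 0$; since the second factor is not identically zero, $\widehat{f}\,\widehat{\psi}\equiv 0$ in the first variable. The hard part is exactly here: $\widehat{\psi}$ is the Fourier transform of a nonzero compactly supported function, hence entire of exponential type (the Paley--Wiener remark), so its zero set is discrete; thus $\widehat{f}$ vanishes off a discrete set and, being continuous, vanishes identically, giving $f=0$. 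This nonvanishing of the multiplier $\widehat{\psi}$ away from a discrete set is the only substantive point; the remainder is the bookkeeping of \prettyref{exa:wavelet}. For general $m$ I would take instead the $m$-fold convolution of \prettyref{exa:wavelet} with one tensor slot left free, and argue identically; and for a general finite $W\subset\mathbb{R}^{m}\backslash\left(0\right)$ one may invoke the earlier corollary stating that $L^{2}\left(J^{m}\right)\ominus\mathscr{H}_{W}$ is infinite-dimensional.
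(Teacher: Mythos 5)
Your proposal is correct and takes essentially the paper's own route: the corollary is stated there as an immediate consequence of \prettyref{exa:wavelet}, where $F=F_{1}\ast F_{2}$ with $f,g$ arbitrary, and your map $T\left(f\right)$ is precisely that construction with one tensor slot left free, your only addition being the explicit Paley--Wiener/discrete-zero-set argument that upgrades the paper's implicit ``$f,g$ arbitrary'' into genuine injectivity of $f\mapsto T\left(f\right)$, hence linear independence. One cosmetic point you share with the paper: convolution enlarges supports from $J^{2}$ to $2J^{2}$, so the factorization $\widehat{T\left(f\right)}\left(\xi_{1},\xi_{2}\right)=\widehat{f}\left(\xi_{1}\right)\widehat{\psi}\left(\xi_{1}\right)\widehat{\psi}\left(\xi_{2}\right)\widehat{g}\left(\xi_{2}\right)$ holds for the full-line transform, and before invoking \prettyref{lem:D3} (whose $\widehat{F}$ integrates only over $J^{m}$) one should rescale $f,\psi,g$ to be supported in $\left[-\frac{1}{2},\frac{1}{2}\right]$, or read the convolution periodically, exactly as the paper's own ``functions are restricted to $J^{m}$'' remark tacitly requires.
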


Now fix $w\in W$, and do the $\psi$ construction in a concatenate
system $\mathbb{R}w\times\Pi_{w}=\mathbb{R}^{m}$, where $\Pi_{w}=\left\{ x\in\mathbb{R}^{m}\mathrel{;}w^{T}x=0\right\} $
= the $w$ hyperplane. 

For $z\in\Pi_{w}\backslash\left(0\right)$, do a $\psi$ construction
and extend to $\mathbb{R}^{m}$, so 
\begin{equation}
\widehat{\psi_{z}}\left(\xi\right)=\frac{\cos\left(z^{T}\xi\right)-1}{z^{T}\xi}=\frac{\cos\left(z\cdot\xi\right)-1}{z\cdot\xi},\quad\forall\xi\in\mathbb{R}^{m}.\label{eq:G1}
\end{equation}
But we should cut down $\psi_{z}$ to $J^{m}$, so that integration
is convergent. 
\begin{cor}
Let $\psi_{z}$ be as above, and let $w$ be fixed; s.t. $w\in\Pi_{z}$,
$z\in\Pi_{w}$. Then 
\begin{equation}
\widehat{\psi}_{z}\left(tw\right)=0,\quad\forall t\in\mathbb{R},\label{eq:G2}
\end{equation}
and in particular, $\psi_{z}\in W_{2}^{\perp}$. 
\end{cor}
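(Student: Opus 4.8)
The plan is to reduce the statement to a one-line evaluation of the explicit Fourier transform in \eqref{eq:G1}, followed by an appeal to the Fourier characterization of $W_2^{\perp}$. First I would record that \eqref{eq:G1} exhibits $\widehat{\psi_z}$ as a \emph{ridge} function in the frequency variable: writing $\widehat{\psi}\left(u\right)=\left(\cos u-1\right)/u$ for the one-dimensional transform, with the removable-singularity value $\widehat{\psi}\left(0\right)=0$ (legitimate since the Taylor expansion recorded in \prettyref{exa:wavelet} gives $\widehat{\psi}\left(u\right)=-\tfrac12 u+\tfrac{1}{24}u^{3}-\cdots$, so $\widehat{\psi}$ extends continuously through $0$), we have $\widehat{\psi_z}\left(\xi\right)=\widehat{\psi}\left(z\cdot\xi\right)$ for all $\xi\in\mathbb{R}^m$.

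Next I would use the hypothesis to annihilate the argument along the whole line $\mathbb{R}w$. The two conditions $w\in\Pi_z$ and $z\in\Pi_w$ are the \emph{same} scalar relation $z\cdot w=z^{T}w=w^{T}z=0$. Consequently $z\cdot\left(tw\right)=t\left(z\cdot w\right)=0$ for every $t\in\mathbb{R}$, and therefore $\widehat{\psi_z}\left(tw\right)=\widehat{\psi}\left(0\right)=0$ for all $t$, which is exactly \eqref{eq:G2}.

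To obtain the membership $\psi_z\in W_2^{\perp}$ I would invoke \prettyref{lem:D3}: the defining condition \eqref{eq:F5} for $W_2^{\perp}$ is precisely that the $L^2\left(J^m\right)$-Fourier transform \eqref{eq:D7} vanishes on each line $\left\{tw\right\}_{t\in\mathbb{R}}$. Thus \eqref{eq:G2} says $\psi_z\perp\mathscr{H}\left(w\right)$ in $L^2\left(J^m\right)$; and since the computation of the previous paragraph depends on $w$ only through the relation $z\cdot w=0$, the same conclusion holds simultaneously for every $w\in W$ lying in $\Pi_z$, giving $\psi_z\in W_2^{\perp}$.

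The step I expect to require the most care is not the vanishing itself but the identification implicit in passing from \eqref{eq:G1} to \prettyref{lem:D3}: the formula \eqref{eq:G1} is written for $\xi$ ranging over all of $\mathbb{R}^m$, whereas \prettyref{lem:D3} applies to the transform \eqref{eq:D7} of a genuine $F\in L^2\left(J^m\right)$. This is exactly the point flagged in the text by the remark that one must ``cut down $\psi_z$ to $J^m$.'' I would handle it by performing the construction in the adapted splitting $\mathbb{R}^m=\mathbb{R}w\times\Pi_w$ (as in \prettyref{lem:E1}), placing the one-dimensional Haar profile along the direction $z\in\Pi_w$ and truncating to $J^m$ in those coordinates, and then verifying directly that the truncated $\psi_z$ still has vanishing Radon transform $R_w\left(\psi_z\right)\equiv0$ in the sense of \prettyref{lem:E1}; by that lemma this is equivalent to $\psi_z\in L^2\left(J^m\right)\ominus\mathscr{H}_W$, which circumvents any worry that the truncation might spoil the frequency-line vanishing obtained formally from \eqref{eq:G2}.
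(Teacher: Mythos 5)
Your proof is correct and follows essentially the same route as the paper's: the paper likewise evaluates \eqref{eq:G1} at $\xi=tw$, uses $w^{T}z=0$ to get $\widehat{\psi}_{z}\left(tw\right)=0$, and reads membership in $W_{2}^{\perp}$ off the Fourier-transform characterization of \prettyref{lem:D3}. If anything you are more careful than the paper on two points it leaves implicit --- the removable singularity (the paper's displayed quotient is formally $0/0$ since $tw^{T}z=0$, which your Taylor-expansion remark resolves) and the truncation to $J^{m}$, which the paper dispatches with ``note functions are restricted to $J^{m}$'' while you correctly flag that the restriction should be performed in the adapted splitting $\mathbb{R}w\times\Pi_{w}$ and re-verified through the Radon transform of \prettyref{lem:E1} --- whereas the paper's proof additionally appends the convolution step $F=\ast_{k=1}^{p}\psi_{k}$ in \eqref{eq:G4}--\eqref{eq:G5} to pass from the single fixed $w$ to a full finite $W$, a step you replace by the (also valid, but weaker) observation that your computation applies simultaneously to every $w\in W$ lying in $\Pi_{z}$.
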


\begin{proof}
Observe that 
\begin{equation}
\widehat{\psi}_{z}\left(tw\right)=\frac{\cos\left(tw^{T}z\right)-1}{tw^{T}z}=0
\end{equation}
since $w^{T}z=0$, see \eqref{eq:G1}; so $\psi_{z}\in W_{2}^{\perp}\subset W_{\mathscr{M}}^{\perp}$
(but it depends on $w$, fixed in $W$.) If $W=\left\{ w_{k}\right\} _{1}^{p}$,
and 
\begin{equation}
\psi_{k}\in W_{2}^{\perp},\label{eq:G4}
\end{equation}
then 
\begin{equation}
F=\ast_{k=1}^{p}\psi_{k}\in W_{2}^{\perp}\label{eq:G5}
\end{equation}
where $\ast$ denotes convolution. Note functions are restricted to
$J^{m}$. 
\end{proof}
Here is a way to generate more functions $\psi_{k}$ as in \eqref{eq:G4}--\eqref{eq:G5}:
We can easily generalize to more functions $F\in W_{2}^{\perp}$. 

Fix $W=\left\{ w_{k}\right\} _{1}^{p}$. Let $\mu_{w_{1}}$ be a
finite positive measure on $\Pi_{w_{1}}$. Let $\psi_{z}$ be as in
\eqref{eq:G1}--\eqref{eq:G2}, $z\in\Pi_{w_{1}}$, and set 
\begin{equation}
\psi_{w_{1}}\left(\cdot\right)=\int_{\Pi_{w_{1}}}\psi_{z}\left(\cdot\right)d\mu_{w_{1}}\left(z\right)\label{eq:G6}
\end{equation}
as a function on $\mathbb{R}^{m}$, and restrict to $J^{m}$. Now
do the construction in \eqref{eq:G6} also for $w_{2}$, $w_{3}$,
$\cdots$ , $w_{p}$, with choice of positive measures $\mu_{w_{k}}$
on $\Pi_{w_{k}}$ for $1\leq k\leq p$; and set 
\begin{equation}
\psi=\ast_{k=1}^{p}\psi_{k},\label{eq:G7}
\end{equation}
so that $\widehat{\psi}=\prod_{k=1}^{p}\widehat{\psi}_{w_{k}}$.
We conclude that $\psi$ in \eqref{eq:G7} is in $W_{2}^{\perp}$.
\begin{example}
Illustration of key arguments in one and two dimensions.

For $m=2$, $x=\left(x_{1},x_{2}\right)\in\mathbb{R}^{2}$ or $x\in J^{2}$;
let $w=e_{2}$ and
\[
\psi_{w_{2}}\left(x_{1},x_{2}\right)=\psi\left(x_{1}\right)
\]
where $\psi$ is the 1D Haar wavelet. Then 
\begin{align*}
\widehat{\psi_{w_{2}}}\left(\xi_{1},\xi_{2}\right) & =\int_{-1}^{1}\int_{-1}^{1}e^{i\left(x_{1}\xi_{1}+x_{2}\xi_{2}\right)}\psi\left(x_{1}\right)dx_{1}dx_{2}\\
 & =\frac{\left(\cos\left(\xi_{1}\right)-1\right)}{\xi_{1}}\frac{\sin\left(\xi_{2}\right)}{\xi_{2}};
\end{align*}
recall $\text{sinc}\left(\xi\right)=\frac{\sin\xi}{\xi}$. So we have
\[
\widehat{\psi_{w_{1}}\ast\psi_{w_{2}}}\left(\xi_{1},\xi_{2}\right)=\frac{\cos\left(\xi_{2}\right)-1}{\xi_{2}}\text{sinc}\left(\xi_{1}\right)\frac{\cos\left(\xi_{1}\right)-1}{\xi_{1}}\text{sinc}\left(\xi_{2}\right)
\]
and so $F=\psi_{w_{1}}\ast\psi_{w_{2}}\in W^{\perp}$, where $W=\left\{ \left(0,1\right),\left(1,0\right)\right\} $,
$\widehat{F}\left(tw_{1}\right)=0$ and $\widehat{F}\left(tw_{2}\right)=0$,
$\forall t\in\mathbb{R}$. 
\end{example}

$m$-dimension ($m>2$): Fix $w\in W$, consider
\[
\int_{\prod_{w}}\widehat{\psi_{z}}\left(tw\right)d\mu_{w}\left(z\right).
\]
Set $F_{w}\left(x\right)=\int_{\Pi_{w}}\psi_{z}\left(x\right)d\mu_{w}\left(z\right)$,
then 
\[
\int_{J^{m}}\varphi\left(w^{T}x\right)F_{w}\left(x\right)d^{m}x
\]

For additional details regarding Wiener theory and positive definite
functions, see e.g., \cite{MR2948566,MR3830229}.
\begin{acknowledgement*}
The present work was started during the NSF CBMS Conference, \textquotedblleft Harmonic
Analysis: Smooth and Non-Smooth\textquotedblright , by Jorgensen,
held at the Iowa State University, June 4--8, 2018. We thank the
NSF for funding, the organizers, especially Prof Eric Weber; and the
CBMS participants. We had many fruitful discussions with Profs Daniel
Alpay, and Sergii Bezuglyi, among others. 
\end{acknowledgement*}
\bibliographystyle{IEEEtran}
\bibliography{ref}

\end{document}